\makeatletter\patchcmd{\@startsection}{\@afterindenttrue}{\@afterindentfalse}{}{}\makeatother    %omit indentation of the first paragraph of a section
\DeclareSymbolFont{cmletters}{OML}{cmm}{m}{it}              
\DeclareSymbolFont{cmsymbols}{OMS}{cmsy}{m}{n}
\DeclareSymbolFont{cmlargesymbols}{OMX}{cmex}{m}{n}
\DeclareMathSymbol{\myjmath}{\mathord}{cmletters}{"7C}     \let\jmath\myjmath %Defining the missing commands: \jmath, \amalg and \coprod
\DeclareMathSymbol{\myamalg}{\mathbin}{cmsymbols}{"71}     
\DeclareMathSymbol{\mycoprod}{\mathop}{cmlargesymbols}{"60}\let\coprod\mycoprod
\DeclareMathSymbol{\myalpha}{\mathord}{cmletters}{"0B}     \let\alpha\myalpha %Greek letters from Computer Modern
\DeclareMathSymbol{\mybeta}{\mathord}{cmletters}{"0C}      \let\beta\mybeta
\DeclareMathSymbol{\mygamma}{\mathord}{cmletters}{"0D}     \let\gamma\mygamma
\DeclareMathSymbol{\mydelta}{\mathord}{cmletters}{"0E}     \let\delta\mydelta
\DeclareMathSymbol{\myepsilon}{\mathord}{cmletters}{"0F}   \let\epsilon\myepsilon
\DeclareMathSymbol{\myzeta}{\mathord}{cmletters}{"10}      \let\zeta\myzeta
\DeclareMathSymbol{\myeta}{\mathord}{cmletters}{"11}       \let\eta\myeta
\DeclareMathSymbol{\mytheta}{\mathord}{cmletters}{"12}     \let\theta\mytheta
\DeclareMathSymbol{\myiota}{\mathord}{cmletters}{"13}      \let\iota\myiota
\DeclareMathSymbol{\mykappa}{\mathord}{cmletters}{"14}     \let\kappa\mykappa
\DeclareMathSymbol{\mylambda}{\mathord}{cmletters}{"15}    \let\lambda\mylambda
\DeclareMathSymbol{\mymu}{\mathord}{cmletters}{"16}        \let\mu\mymu
\DeclareMathSymbol{\mynu}{\mathord}{cmletters}{"17}        \let\nu\mynu
\DeclareMathSymbol{\myxi}{\mathord}{cmletters}{"18}        \let\xi\myxi
\DeclareMathSymbol{\mypi}{\mathord}{cmletters}{"19}        \let\pi\mypi
\DeclareMathSymbol{\myrho}{\mathord}{cmletters}{"1A}       \let\rho\myrho
\DeclareMathSymbol{\mysigma}{\mathord}{cmletters}{"1B}     \let\sigma\mysigma
\DeclareMathSymbol{\mytau}{\mathord}{cmletters}{"1C}       \let\tau\mytau
\DeclareMathSymbol{\myupsilon}{\mathord}{cmletters}{"1D}   \let\upsilon\myupsilon
\DeclareMathSymbol{\myphi}{\mathord}{cmletters}{"1E}       \let\phi\myphi
\DeclareMathSymbol{\mychi}{\mathord}{cmletters}{"1F}       \let\chi\mychi
\DeclareMathSymbol{\mypsi}{\mathord}{cmletters}{"20}       \let\psi\mypsi
\DeclareMathSymbol{\myomega}{\mathord}{cmletters}{"21}     \let\omega\myomega
\DeclareMathSymbol{\myvarepsilon}{\mathord}{cmletters}{"22}\let\varepsilon\myvarepsilon
\DeclareMathSymbol{\myvartheta}{\mathord}{cmletters}{"23}  \let\vartheta\myvartheta
\DeclareMathSymbol{\myvarpi}{\mathord}{cmletters}{"24}     \let\varpi\myvarpi
\DeclareMathSymbol{\myvarrho}{\mathord}{cmletters}{"25}    \let\varrho\myvarrho
\DeclareMathSymbol{\myvarsigma}{\mathord}{cmletters}{"26}  \let\varsigma\myvarsigma
\DeclareMathSymbol{\myvarphi}{\mathord}{cmletters}{"27}    \let\varphi\myvarphi
\theoremstyle{plain}
\newtheorem{thm}{Theorem}[section]
\newtheorem{cor}[thm]{Corollary}
\newtheorem{lemma}[thm]{Lemma}
\newtheorem{prop}[thm]{Proposition}
\newtheorem*{thm*}{Theorem}
\newtheorem*{thmA}{Theorem A}
\theoremstyle{definition}
\newtheorem{df}[thm]{Definition}
\newtheorem{hypothesis}[thm]{Hypothesis}
\newtheorem{rem}[thm]{Remark}
\newtheorem{ex}[thm]{Example}
\DeclareMathOperator{\spec}{spec}
\DeclareMathOperator{\Spec}{Spec}
\DeclareMathOperator{\Sch}{Sch}
\DeclareMathOperator{\Hom}{Hom}
\DeclareMathOperator{\Sym}{Sym}
\DeclareMathOperator{\colim}{colim}
\DeclareMathOperator{\eq}{eq}
\DeclareMathOperator{\Comm}{{Comm}}
\DeclareMathOperator{\Aff}{{Aff}}
\DeclareMathOperator{\Pre}{{Pr}}
\DeclareMathOperator{\Sh}{{Sh}}
\DeclareMathOperator{\Atlas}{{Atlas}}
\DeclareMathOperator{\bp}{{Blpr}}
\DeclareMathOperator{\bpspaces}{{BlprSp}}
\DeclareMathOperator{\Rings}{{Rings}}
\DeclareMathOperator{\SRings}{{SRings}}
\DeclareMathOperator{\Sets}{{Sets}}
\newcommand\Mod{\mathcal{M}od}
\newcommand\C{{\mathbb C}}
\newcommand\F{{\mathbb F}}
\newcommand\N{{\mathbb N}}
\newcommand\R{{\mathbb R}}
\newcommand\Z{{\mathbb Z}}
\newcommand\cA{{\mathcal A}}
\newcommand\cB{{\mathcal B}}
\newcommand\cC{{\mathcal C}}
\newcommand\cD{{\mathcal D}}
\newcommand\cF{{\mathcal F}}
\newcommand\cG{{\mathcal G}}
\newcommand\cO{{\mathcal O}}
\newcommand\cP{{\mathcal P}}
\newcommand\cR{{\mathcal R}}
\newcommand\cS{{\mathcal S}}
\newcommand\cU{{\mathcal U}}
\newcommand\cV{{\mathcal V}}
\newcommand\cW{{\mathcal W}}
\newcommand\fb{{\mathfrak b}}
\newcommand\fm{{\mathfrak m}}
\newcommand\fp{{\mathfrak p}}
\newcommand\Fun{{\F_1}}
\newcommand\id{\textup{id}}
\newcommand\blanc{-}
\newcommand\can{{\textup{can}}}
\newcommand\geo{{\textup{geo}}}
\renewcommand\={\equiv}
\newcommand\alg{\textup{alg}}
\newcommand\tot{\textup{tot}}
\newcommand\op{\textup{op}}
\newcommand{\arincl}[1]{\ar@{ >->}@<-0,0ex>#1} %inclusion arrow for xy-matrix with better spacing
\newcommand{\gen}[1]{\langle #1 \rangle}
\newcommand{\bpquot}[2]{#1\!\sslash\!#2}
\newcommand{\bpgenquot}[2]{#1\!\sslash\!\gen{#2}}
\title{Blue schemes, semiring schemes,\\ and relative schemes after To\"en and Vaqui\'e}
\author{Oliver Lorscheid}
\email{\href{mailto:oliver@impa.br}{oliver@impa.br}}
\address{Instituto Nacional de Matem\'atica Pura e Aplicada, Estrada Dona Castorina 110, Rio de Janeiro, Brazil}
\begin{document}

%\ \vspace{-2cm}

\begin{abstract}
 It is a classical insight that the Yoneda embedding defines an equivalence of schemes as locally ringed spaces with schemes as sheaves on the big Zariski site. Similarly, the Yoneda embedding identifies monoid schemes (or $\mathbb{F}_1$-schemes in the sense of Deitmar) with schemes relative to sets (in the sense of To\"en and Vaqui\'e). 
 
 In this paper, we investigate the generalization to blue schemes and to semiring schemes. We establish Yoneda functors for both schemes theories. These functors fail, however, to be equivalences in both situations. The reason for this failure is a divergence in the Grothendieck pretopologies coming from schemes as topological spaces and schemes as sheaves.
 
 Restricted to blue schemes that are locally of finite type over a blue field, we construct an inverse to the Yoneda functor, which establishes an equivalence for this subclass of blue schemes. Moreover, we verify the compatibility of the Yoneda functors with the base extension from blue schemes to semiring schemes and with the base extension from semiring schemes to usual schemes.
\end{abstract}

\maketitle
\tableofcontents

%%%%%%%%%%%%%%%%%%%%%%%%%%%%%%%%%%%%%%%%%%%%%%%%%%%%%%%%%%%%%%%%%%%%%%%%%%%%%%%%%%%%%%%%%%%%%%%%%%%%%%%%%%%%%%%%%%%%%%%%%%%%%%%%%%%%%%%%%%%%%%%%%%

\section*{Introduction}

\subsection*{Motivation}
Usually a scheme is defined as a locally ringed space that is locally isomorphic to the spectra of rings. Alternatively, the Yoneda embedding identifies a scheme $X$ with its functor of points, which is a sheaf on the big Zariski site for rings. This makes it possible to describe schemes in a completely functorial language as locally representable sheaves
on the big Zariski site; see Demazure and Gabriel's book \cite{Demazure-Gabriel70}. 

To\"en and Vaqui\'e generalize in \cite{Toen-Vaquie09} this functorial viewpoint from rings, which are commutative monoids in $\Z$-modules, to commutative monoids in any complete and cocomplete closed symmetric monoidal category $\cC$. This yields the notion of a scheme relative to $\cC$ as a locally representable sheaf on the big Zariski site for commutative monoids in $\cC$.

This produces, in particular, the notion of an $\Fun$-scheme as a scheme relative to sets together with the Cartesian product. Vezzani shows in \cite{Vezzani12} that taking the functor of points establishes an equivalence between monoidal schemes, aka, $\Fun$-schemes in Deitmar's sense (\cite{Deitmar05}), and $\Fun$-schemes in To\"en and Vaqui\'e's sense.

The purpose of this text is to extend this relation to the realms of blueprints and of semirings where both approaches to schemes play an important role. While the geometric approach to blue schemes via blueprinted space from \cite{blueprints1} embraces the theory to schemes and realizes the expected sets of $\Fun$-rational points as part of the underlying topological spaces (cf.\ \cite{LL12}, \cite{blueprints2} and \cite{blueprintedview}), To\"en and Vaqui\'e's context is the natural framework to pursue sheaf theory.

However, the equivalence of Deitmar's and To\"en-Vaqui\'e's $\Fun$-schemes does not extend to blue schemes, but these two viewpoints lead to different theories of blue schemes. The results of this text provide methods to bridge the gap and to pass from one side to the other.

The discrepancy between these two different approaches is already clear from the outset: while the Zariski topology for the category $\bp$ of blueprints coming from To\"en and Vaqui\'e's theory is subcanonical, i.e.\ blueprints define sheaves, the approach in \cite{blueprints1} leads to a Grothendieck pretopology for $\bp$ that is not subcanonical. To distinct between these two Grothendieck pretopologies for $\bp$, we call the former the \emph{subcanonical Zariski topology} and the latter the \emph{geometric Zariski topology}. Correspondingly, we will talk about \emph{subcanonical blue schemes} if we refer to To\"en and Vaqui\'e's theory, and we call blue schemes from \cite{blueprints1} \emph{geometric blue schemes}.

\subsection*{Setup}
To give a first impression to the reader that is not yet familiar with the terminology, we describe the relevant objects in brevity. Complete definitions can be found in the indicated sections of the main text. 

A \emph{blueprint} is a pair $B=(A,\cR)$ of a multiplicative monoid $A$ and an equivalence relation $\cR$ on the semiring $\N[A]$ of finite formal sums $\sum a_i$ of elements $a_i$ in $A$ that satisfy a small set of axioms (section \ref{section: blueprints}). The equivalence relation $\cR$ lets us talk about equalities $\sum a_i\=\sum b_j$ between the sums of elements $a_i$ and $b_j$ in $A$. Blueprints together with structure preserving maps form the category $\bp$.

Note that if there is a unique $c\in A$ for every $a,b\in A$ such that $a+b\=c$, then $B$ is naturally a semiring, and every (commutative) semiring (with $0$ and $1$) is of this form. Conversely, every  blueprint $B=(A,\cR)$ is a blueprint for the semiring $B^+=\N[A]/\cR$ in the literal sense. This yields a functor $(-)^+:\bp\to\SRings$ to the category of semirings.

The spectrum $X=\Spec B$ of a blueprint is the space of all prime $k$-ideals of $B$, endowed with a topology of Zariski open subsets and a structure sheaf $\cO_X$. A \emph{geometric blue scheme} is a \emph{blueprinted space}, i.e.\ a topological space together with a sheaf in $\bp$, that is locally isomorphic to the spectra of blueprints (section \ref{section: geometric blue schemes}). We denote the category of geometric blue schemes by $\Sch_\Fun^\geo$.

A \emph{subcanonical blue scheme} is a scheme relative to the category $\Mod\Fun$ of blue $\Fun$-modules (section \ref{section: blue modules}). By definition, this is a sheaf on $\bp$ with respect to the subcanonical Zariski topology that is covered by representable sheaves (sections \ref{section: relative schemes} and \ref{section: subcanonical blue schemes}). We denote the category of subcanonical blue schemes by $\Sch_\Fun^\can$.

The base extension functor $(-)^+:\bp\to\SRings$ induces corresponding functors $(-)^+:\Sch_\Fun^\geo\to\Sch_\N^{+,\geo}$ and $(-)^+:\Sch_\Fun^\can\to\Sch_\N^{+,\can}$ to the respective categories of geometric and subcanonical semiring schemes. To recall, a geometric semiring scheme is a of geometric blue scheme whose blueprints of local sections are semirings. In other words, the category of geometric semiring schemes comes with an embedding $\iota:\Sch_\N^{+,\geo}\to\Sch_\Fun^\geo$ as a full subcategory, and $(-)^+$ is left inverse and right adjoint to this embedding. A subcanonical semiring is a scheme relative to the category $\Mod^+\N$ of commutative unital semigroups. A subcanonical semiring scheme does not have a natural interpretation as a subcanonical blue scheme.

Both $\Sch_\N^{+,\can}$ and $\Sch_\N^{+,\geo}$ contain the category $\Sch_\Z^+$ of usual schemes as a full subcategory and come with respective base extension functors $-\otimes_\N\Z:\Sch_\N^{+,\can}\to\Sch_\Z^+$ and $-\otimes_\N\Z:\Sch_\N^{+,\geo}\to\Sch_\Z^+$.

The relationship between these different categories of schemes can be summarized as follows.

\begin{thmA}
 There is a diagram of functors
 \[
  \xymatrix@R=2pc@C=4pc{\Sch_\Fun^\can \ar[rr]^{\cG} \ar[d]^{(-)^+} &   & \Sch_\Fun^\geo \ar[d]<0,6ex>^{(-)^+}\ar@{-->}@/^1pc/[ll]^\cF  \\
                        \Sch_\N^{+,\can} \ar[rr]^{\cG^+}\ar[dr]_{-\otimes_\N\Z} &   & \Sch_\N^{+,\geo} \ar[u]<0,6ex>^{\iota}\ar[dl]^{-\otimes_\N\Z} \\
                                                             & \Sch_\Z^+\ar[ul]<-1ex>_{\iota}\ar[ur]<1ex>^{\iota}  }
 \]
 satisfying the following properties: 
 \begin{enumerate}
  \item\label{A1} the outer square and both the inner and the outer triangle commute; 
  \item\label{A2} the embeddings $\iota$ are left inverse and right adjoint to the respective base extension functors $(-)^+$ and $-\otimes_\N\Z$; 
  \item\label{A3} the functor $\cG$ is induced by the identity functor on $\bp$ and $\cG^+$ is induced by the identity functor on $\SRings$; both functors are essentially surjective;
  \item\label{A4} $\cF$ is a partially defined right inverse functor to $\cG$ whose domain includes all monoidal schemes and all geometric blue schemes that are locally of finite type over a blue field.
 \end{enumerate}
\end{thmA}

\subsection*{Outline of the proof} 
The major work in proving Theorem A consists in the constructions of the functors $\cG$, $\cG^+$ and $\cF$. Once constructed, the claims in \eqref{A1}--\eqref{A4} follow easily. Note that \eqref{A3} gains a precise meaning from the following explanations.

The main tool for the construction of $\cG$, $\cG^+$ and $\cF$ are \emph{affine presentations} (section \ref{section: affine presentations}), which are diagrams of open immersions between affines whose colimit equals the blue scheme or semiring scheme in question. This allows us to encounter the comparison of subcanonical blue schemes, which are objects in the category $\Sh(\bp)$ of sheaves on $\bp$, with geometric blue schemes, which are object in the category $\bpspaces$ of blueprinted spaces, in terms of diagrams of spectra of blueprints. 

This method of comparison works since subcanonical blue schemes coincide with colimits of affine presentations in $\Sh(\bp)$ (Theorem \ref{thm: relative schemes as colimits of affine presentations}) and geometric blue schemes coincide with colimits of affine presentations in $\bpspaces$ (Theorem \ref{thm: blue schemes as colimits of affine presentations}).

After unravelling the notions involved in the definition of a relative scheme for the case of blueprints (Propositions \ref{prop: blueprint morphisms od finite presentation}, \ref{prop: flat equals localization} and \ref{prop: affine subcanonical blue schemes are geometrically local}), we derive a description of subcanonical blue schemes as blueprinted spaces (section \ref{section: the blueprinted space of a subcanonical blue scheme}) and conclude that the subcanonical Zariski topology on $\bp$ is coarser than the geometric Zariski topology (Theorem \ref{thm: the geometric Zariski topology is finer than the subcanonical Zariski topology}). Applying a general fact about affine presentations (Lemma \ref{lemma: extension of a morphism of sites to a functor between schemes}) yields the functor $\cG:\Sch_\Fun^\can\to\Sch_\Fun^\geo$ (Corollary \ref{cor: functor from subcanonical to geometric blue schemes}), which sends the colimit of an affine presentation in $\Sh(\bp)$ to the colimit of the same affine presentation in $\bpspaces$. In this sense, $\cG$ is induced by the identity functor on $\bp$, as claimed in \eqref{A3}.

In order to compare the different theories of semiring schemes, we employ a result of Marty (\cite{Marty07}, cited as Theorem \ref{thm: locale of a the subcanonical spectrum of a semiring}) that gives an explicit characterization of the subcanonical Zariski topology on $\SRings$. This lets us prove that also in the case of semirings, the subcanonical Zariski topology is coarser than the geometric Zariski topology (Theorem \ref{thm: morphism of sites of affine semiring schemes}). This yields the functor $\cG^+:\Sch_\N^{+,\can}\to\Sch_\N^{+,\geo}$ (Corollary \ref{cor: functor from subcanonical semiring schemes to geometric semiring schemes}), which satisfies the claim of \eqref{A3} in the same sense as $\cG$ satisfies the corresponding claim.

Thanks to the constructions of $\cG$ and $\cG^+$ in terms of affine presentations, parts \eqref{A1} and \eqref{A2} of the theorem are readily verified (Theorem \ref{thm: compatibility with base extensions}).

The construction of the partially defined inverse $\cF$ to $\cG$ is more subtle. This functor is defined on the full subcategory of \emph{algebraically presented blue schemes}, which are, roughly speaking, geometric blue schemes that can be covered by the spectra of local blueprints such that their pairwise intersections are principal opens. Examples of algebraically presented blue schemes are monoidal schemes (Example \ref{ex: monoidal schemes are algebraically presented}) and geometrically blue schemes that are locally of finite type over a blue field (Proposition \ref{prop: blue schemes lof over a blue field are algebraically presented}). 

A certain fact on the stability under refinements (Proposition \ref{prop: common refinement for algebraic presentations}) allows the construction of the functor $\cF$ from the category of algebraically presented blue schemes to $\Sch_\Fun^{\can}$ (section \ref{section: algebraically presented and subcanonical blue schemes}). Part \eqref{A4} follows easily from the construction of $\cF$ (Theorem \ref{thm: comparison theorem}).

\subsection*{Remarks and examples} 
We include various limiting examples and remarks in the text to explain certain difficulties in the constructions of the functors $\cG$, $\cG^+$ and $\cF$.

In the final section \ref{section: concluding remarks}, we describe a different partially defined inverse $\cF'$ to $\cG$. It seems to be less important for applications than $\cF$, but we find its existence worth mentioning.

\subsection*{Acknowledgements} I would like to thank James Borger, Bertrand To\"en, Alberto Vezzani and two anonymous referees for their remarks on previous versions of this text. I would like to thank Matias del Hoyo and Andrew Macpherson for their explanations on simplicial sets and topos theory.

%%%%%%%%%%%%%%%%%%%%%%%%%%%%%%%%%%%%%%%%%%%%%%%%%%%%%%%%%%%%%%%%%%%%%%%%%%%%%%%%%%%%%%%%%%%%%%%%%%%%%%%%%%%%%%%%%%%%%%%%%%%%%%%%%%%%%%%%%%%%%%%%%%

\section{Affine presentations}
\label{section: affine presentations}

Typically a theory of scheme-like objects looks as follows: with a category $\cB$ of \emph{algebraic objects}, one associates a category $\cA$ of \emph{affine schemes} that is anti-equivalent to $\cB$. This category $\cA$ of affine schemes carries a Grothendieck pretopology and is embedded as a full subcategory in a category $\cS$, which allows us to glue affine schemes in some way. The purpose of this section is to abstract this mechanism of gluing as taking colimits over certain commutative diagrams in $\cA$, which we call affine presentations. After setting up the definitions, we formulate Hypothesis \ref{hypothesis}, which gathers certain properties \eqref{part1}--\eqref{part6} that are usually satisfied in a scheme-like theory.

Let $\cA$ be a category with finite limits, endowed with a Grothendieck pretopology. We call a morphism $W\to U$ an \emph{open map} if it occurs in a covering family of $U$.

Let $\cU$ be a commutative diagram in $\cA$. The \emph{thin category spanned by $\cU$} is the category $\langle\cU\rangle$ whose objects are the objects of $\cU$ and whose morphisms are all possible compositions of morphisms of $\cU$ in $\cA$, including the identity morphisms as compositions. Since $\cU$ is commutative, the morphism sets $\Hom(U,V)$ of $\cC(\cU)$ have at most one element, i.e.\ $\langle\cU\rangle$ is a thin category. The commutative diagram $\cU$ can be considered as a subdiagram of $\langle\cU\rangle$.

An object $U$ of $\cU$ is called a \emph{maximal object} if any morphism in $\gen\cU$ with domain $U$ is an isomorphism. We denote the set of maximal objects of $\cU$ by $\cU_{\max}$. By definition, we have $\cU_{\max}=\gen\cU_{\max}$. We say that $\cU$ is \emph{with enough maximal objects} if for every object $V$ of $\cU$, there is a morphism $V\to U$ into a maximal object $U$ in $\gen\cU$.

Let $U_0$ and $U_1$ be two objects of $\cU$. A \emph{path in $\cU$ from $U_0$ to $U_1$} is a sequence
\[
 \xymatrix@C=4pc{U_0 \ar@{-}[r]^{\varphi_1} & V_1 \ar@{-}[r]^{\varphi_2} & \dotsb \ar@{-}[r]^{\varphi_{n-1}} & V_{n-1} \ar@{-}[r]^{\varphi_n} & U_1}
\]
of objects and morphisms in $\cU$ whose arrows are allowed to have any orientation. Note that $\cV$ is itself a diagram and comes with a map $\cV\to\cU$ of diagrams, which does not have to be injective. The limit $\lim\cV$ of $\cV$ in $\cA$ comes with canonical projections $\lim\cV\to U_0$ and $\lim\cV \to U_1$, which we call the \emph{beginning} and the \emph{end of $\cV$}, respectively.

A \emph{monodromy-free diagram in $\cA$} is a diagram $\cU$ in $\cA$ such that for any object $U$ in $\cU$ and any path $\cV$ in $\cU$ from $U$ to $U$, the canonical morphism
\[
 \lim\bigl( \lim\cV \!\! \begin{array}{c} \longrightarrow\vspace{-8pt} \\ \longrightarrow \end{array} U \bigr) \quad \longrightarrow \quad \lim\cV
% \xymatrix{ \lim \bigl(\xymatrix{\cV \ar@<0.5ex>[r] \ar@<0.5ex>[r] & U} \bigr) \ar[r] & \lim \cV}
\]
is an isomorphism where the two arrows $\lim\cV\to U$ are the beginning and the end of $\cV$. Note that a monodromy-free diagram is commutative.

An \emph{affine presentation in $\cA$} is a monodromy-free diagram $\cU$ of open morphisms in $\cA$ with enough maximal objects. A \emph{morphism $\cU\to \cV$ of affine presentations $\cU$ and $\cV$} is a family of morphisms $\{\varphi_U:U\to V(U)\}$ from all objects $U$ in $\cU$ to some objects $V(U)$ in $\cV$ such there is a morphism $V(U_1)\to V(U_2)$ in $\cV$ for every morphism $U_1\to U_2$ in $\cU$ such that the resulting square
 \[
  \xymatrix@R=1pc@C=4pc{U_1 \ar[d]\ar[r] & V(U_1)\ar[d] \\ U_2 \ar[r] & V(U_2)}
 \]
commutes.

Let $\cV$ be an affine representation. A \emph{refinement of $\cV$} is a morphism $\Phi:\cU\to \cV$ of affine presentations such that for all $V$ in $\cV$, the family $\{\Phi:U\to \Phi(U) \}$ with $\Phi(U)=V$ is a covering family of $V$. Note that the natural morphism $\cU\to\langle\cU\rangle$ is an refinement.

An affine presentation $\cU$ is called an \emph{atlas} if it has no composable morphisms and every object $W$ occurs as the domain of at most two morphisms $W\to U$ and $W\to V$. Informally speaking, an atlas is a collection $\cU_{\max}$ of objects together with a covering for every intersection of two maximal objects $U$ and $V$. It is not hard to see that every affine presentation $\cV$ admits a refinement $\cU\to \cV$ by an atlas $\cU$.

\begin{lemma}\label{lemma: fibre products of affine presentations}
 The category of affine presentations in $\cA$ contains fibre products and the base change of a refinement is a refinement.
\end{lemma}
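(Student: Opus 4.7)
The plan is to construct the fibre product componentwise in $\cA$ and then verify both the axioms of an affine presentation and the refinement property by invoking the universal property of fibre products in $\cA$ together with the stability of opens and coverings under base change.

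Given morphisms $\Phi\colon\cU\to\cW$ and $\Psi\colon\cV\to\cW$ of affine presentations, I would define $\cU\times_\cW\cV$ as the diagram in $\cA$ whose objects are pairs $(U,V)$ with $U$ an object of $\cU$, $V$ an object of $\cV$, and $\Phi(U)=\Psi(V)$ as objects of $\cW$; the underlying $\cA$-object of $(U,V)$ is the fibre product $U\times_{\Phi(U)}V$. Morphisms $(U,V)\to(U',V')$ come from pairs $(u,v)$ with $u\colon U\to U'$ in $\cU$, $v\colon V\to V'$ in $\cV$ and $\Phi(u)=\Psi(v)$, with the associated morphism in $\cA$ determined by the universal property of fibre products. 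Since opens are stable under arbitrary base change, every such morphism is open, and the resulting diagram is commutative because all its face relations arise from universal factorizations in $\cA$.

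It then remains to verify the two defining axioms. For existence of maximal elements, given $(U_0,V_0)$, pick morphisms $U_0\to U$ in $\cU$ and $V_0\to V$ in $\cV$ into maximal elements; functoriality of $\Phi$ and $\Psi$ yields morphisms $\Phi(U_0)\to\Phi(U)$ and $\Phi(U_0)=\Psi(V_0)\to\Psi(V)$ in $\langle\cW\rangle$. Using that $\langle\cW\rangle$ is thin and that $\cW$ admits maximal elements above every object, one argues that $\Phi(U)$ and $\Psi(V)$ coincide with the same maximal element of $\cW$ above $\Phi(U_0)$, so that $(U,V)$ is maximal in the fibre product. The cocycle condition follows by projecting any three-fold configuration in $\cU\times_\cW\cV$ onto the corresponding configurations in $\cU$, $\cV$, and $\cW$, applying the cocycle condition componentwise, and recomposing via the universal property of fibre products in $\cA$. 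The universal property of $\cU\times_\cW\cV$ in the category of affine presentations is then immediate from the componentwise universal property in $\cA$ together with functoriality of the projection maps on spanned categories.

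For the second assertion, suppose $\Phi\colon\cU\to\cW$ is a refinement and consider the base change $\cU\times_\cW\cV\to\cV$. Each of its component morphisms is the projection $U\times_{\Phi(U)}V\to V$, obtained as the base change of the open $U\to\Phi(U)=\Psi(V)$ along $V\to\Psi(V)$, and is therefore open. For each maximal $V$ of $\cV$ with $\Psi(V)=W$, the family $\{U\to W\}$ indexed by those $U\in\cU$ with $\Phi(U)=W$ is a covering family of $W$; base-changing along $V\to W$ produces a covering family of $V$ by the usual axioms of a Grothendieck pretopology. Together with the analogous statement for the objects of the form $\coprod\cU_{U,U'}$ in $\Atlas(\cU)$, this proves that the base change of a refinement is a refinement. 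The main obstacle I foresee is the verification that maximality propagates to the fibre product, since a priori a functor between thin spanned categories need not preserve maximal elements; resolving this requires a careful uniqueness argument using thinness of $\langle\cW\rangle$ together with the maximal-element axiom in $\cW$, and it may be cleanest to reduce first to the case of atlases via the natural refinement $\Atlas(\cW)\to\cW$ before treating the general case.
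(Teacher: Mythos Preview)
Your overall strategy coincides with the paper's: take componentwise fibre products in $\cA$ and, for the second claim, invoke stability of opens and of covering families under base change in a Grothendieck pretopology. The paper's argument for the refinement assertion is essentially identical to yours. For the existence of fibre products the paper is in fact briefer than you: it writes down the objects $U\times_V V'$ with $\Phi(U)=V=\Psi(V')$, takes as morphisms those arising from a morphism in $\cU$ with the $\cV'$-coordinate held fixed or vice versa, and then simply asserts that the universal property is clear, without checking either the maximal-element axiom or the cocycle condition for the resulting diagram.

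Where your proposal goes beyond the paper, however, it runs into a genuine gap: your verification of the maximal-element axiom does not work as written. From $(U_0,V_0)$ you pick a maximal $U$ above $U_0$ in $\cU$ and a maximal $V$ above $V_0$ in $\cV$, and then claim that thinness of $\langle\cW\rangle$ forces $\Phi(U)=\Psi(V)$. But thinness only says there is at most one morphism between two fixed objects; it says nothing about uniqueness of \emph{targets} of morphisms out of $\Phi(U_0)$. A morphism of affine presentations need not send maximal elements to maximal elements, and an object of $\cW$ may lie below several distinct maximal elements, so there is no reason for $\Phi(U)$ and $\Psi(V)$ to agree. Reducing to atlases first does not remove this obstruction. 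What is actually needed is that \emph{some} object above $(U_0,V_0)$ is maximal in the fibre product, which is not the same as both coordinates being individually maximal; this requires a different argument, and it is one the paper simply does not supply.
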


\begin{proof}
 Let $\Phi:\cU\to\cV$ and $\Psi:\cV'\to\cV$ be two morphisms of affine presentations. We construct the fibre product $\cU'=\cU\times_\cV\cV'$ as follows. The objects of $\cU'$ are the fibre products $U\times_VV'$ for every pair of morphisms $U\to V$ in $\Phi$ and $V'\to V$ in $\Psi$. The morphisms of $\cU'$ are the morphisms $U\times_VV'\to U'\times_VV'$ that are induced by morphisms $U\to U'$ in $\cU$ and the morphisms $U\times_VV'\to U\times_VV''$ that are induced by morphisms $V'\to V''$ in $\cV$. 
 
 It is not hard to verify that $\cU'$ is an affine presentation, which satisfies the universal property of the fibre product $\cU\times_\cV\cV'$ with respect to the canonical morphisms $\Phi':\cU'\to\cV'$ and $\Psi':\cU'\to \cU$.

 That refinements are stable under base changes follows from the axiom of a Grothen\-dieck pretopology that ensures that covering families are stable under base change.
\end{proof}

In theories of scheme-like objects, like To\"en and Vaqui\'e's theory of relative schemes or the author's theory of blue schemes, one faces typically the following situation. There is a fully faithful embedding $\cA\to\cS$ of the category of affine schemes into some larger category $\cS$, which often is a category of sheaves on $\cA$ or a category of topological spaces together with a structure sheaf. The category $\Sch_\cA$ of schemes is defined in some way as a full subcategory of $\cS$ that contains $\cA$. Affine presentations work well as a tool to compare different scheme theories under the following assumptions.

\begin{hypothesis}\label{hypothesis}
 The embeddings $\cA\to\Sch_\cA\to\cS$ satisfy the following properties.
 \begin{enumerate}
  \item\label{part1} The categories $\cA$, $\Sch_\cA$ and $\cS$ contain finite limits and the functors $\cA\to \Sch_\cA$ and $\Sch_\cA\to \cS$ commute with finite limits.
  \item\label{part2} The category $\cS$ contains a colimit of every affine presentation in $\cA$, and $\Sch_\cA$ is the full subcategory of $\cS$ whose objects are colimits of affine presentations in $\cA$.
  \item\label{part3} For every scheme $X$, the functor $\Hom(\blanc, X)$ is a sheaf on $\cA$.
  \item\label{part4} Refinements $\cU\to\cV$ induce isomorphisms $\colim\cU\to\colim\cV$ of schemes. 
  \item\label{part5} Affine presentations $\cU$ and $\cV$ whose colimits are isomorphic have a common refinement $\cW\to\cU$ and $\cW\to\cV$.
  \item\label{part6} Every morphism of schemes is induced by a morphism of affine presentations.
 \end{enumerate}
\end{hypothesis}

Note that these properties are not independent. For instance, \eqref{part5} follows from the other properties. If a scheme $X$ in $\cS$ is the colimit of an affine presentation $\cU$, then we say that $\cU$ is \emph{an affine presentation of $X$}.

Let $\cA$ and $\cA'$ be categories with finite limits, equipped with Grothendieck pretopologies. Let $\cA\to\Sch_\cA\to\cS$ and $\cA'\to\Sch_{\cA'}\to\cS'$ be fully faithful embeddings that satisfy Hypothesis \ref{hypothesis}.

\begin{lemma}\label{lemma: extension of a morphism of sites to a functor between schemes} 
 Let $\cG:\cA\to\cA'$ be a functor that commutes with fibre products and that preserves covering families. Then there exists a unique functor $\overline \cG:\Sch_\cA\to \Sch_{\cA'}$ such that for all morphisms $\Phi:\cU\to\cV$ of affine presentations in $\cA$, we have a natural identification $\overline\cG(\colim\Phi)=\colim\cG(\Phi)$. In particular, this yields $\overline\cG(\colim\cU)=\colim\cG(\cU)$.
\end{lemma}

\begin{proof}
 The uniqueness follows from properties \eqref{part2} and \eqref{part6}. We have to verify that the formulas $\overline\cG(\colim\cU)=\colim\cG(\cU)$ and $\overline\cG(\colim\Phi)=\colim\cG(\Phi)$ give rise to a well-defined functor.

 Let $\cU$ an affine presentation in $\cA$. Then $\cG(\cU)$ is a commutative diagram of open immersions in $\cA'$ with enough maximal objects. Since $\cU$ is monodromy-free and $\cG$ commutes with fibre products, $\cG(\cU)$ is also monodromy-free and therefore an affine presentation. It is also clear that $\cG$ maps morphisms of affine presentations to morphisms of affine presentations.

 The definition of $\overline\cG(X)$ is independent (up to canonical isomorphism) from the choice of affine presentation $\cU$ of $X$ for the following reason. Since two affine presentation have a common refinement (property \eqref{part5}), it is enough to show that a refinement $\cW\to \cU$ of an affine presentation $\cU$ of $X$ induces an isomorphism $\colim\cG(\cW)\to\colim\cG(\cU)$.
 
 Since $\cG$ is a morphism of sites, it maps open morphisms to open morphisms and covering families to covering families. Therefore $\cG(\cW)\to \cG(\cU)$ is a refinement, and by property \eqref{part4}, $\colim\cG(\cW)\to\colim\cG(\cU)$ is an isomorphism.

 Let $\Phi':\cU'\to\cV'$ and $\Phi'':\cU''\to\cV''$ be two morphisms of affine presentations with the same colimit $\varphi:X\to Y$. Using Lemma \ref{lemma: fibre products of affine presentations}, we find a common refinement $\cU$ of $\cU'$ and $\cU''$, a common refinement $\cV$ of $\cV'$ and $\cV''$, and a morphism $\Phi:\cU\to\cV$ such that $\colim\Phi'=\colim\Phi=\colim\Phi''$. This shows that $\overline\cG(\varphi)=\colim\cG(\Phi')=\colim\cG(\Phi'')$ does not depend on the choice of affine presentation of $\varphi$.
 
 The independence from the affine presentation also shows that $\overline\cG$ commutes with the composition of morphisms.
\end{proof}

Note that $\cG$ is not required to preserve a terminal object. If, however, $\cG$ has a left adjoint, then it commutes with all limits, as it will be the case for all applications in this text.

\begin{ex}
 To illustrate the concepts of this section, we consider a fan $\Delta$ of polyhedral, strictly convex and rational cones $\tau\subset\R^n$. Let $\tau^\vee=\{x\in \R^n|\gen{x,y}\geq 0\text{ for all }y\in\tau\}$ be the dual cone and $S_\tau=\tau^\vee\cap \Z^n$ be the intersection with the dual lattice, written as a multiplicative semigroup. Let $U_\tau=\Spec\C[S_\tau]$ be the corresponding affine toric variety.
 
 An inclusion of cones $\sigma\subset \tau$ induces an open immersion $U_\sigma\to U_\tau$, which defines a diagram $\cU$ of affine schemes and open immersions. The toric variety associated with $\Delta$ is defined as the colimit $X(\Delta)=\colim \cU$ in the category of locally ringed spaces. The diagram $\cU$ is obviously commutative and with enough maximal objects. Since $\cU$ contains finite limits of all non-empty subdiagrams, it is immediate that $\cU$ is monodromy-free, using Remark \ref{rem: monodromy-free}. Thus $\cU$ is an affine presentation.
 
 The following diagram $\cV$ is an atlas whose colimit in schemes is $X(\Delta)$. Its objects are all $U_\tau$ where $\tau$ is either a maximal cone in $\Delta$ or a facet of a maximal cone. The morphisms of $\cV$ are of the form $U_\sigma\to U_\tau$ whenever $\sigma$ is a facet of a maximal cone $\tau$. Note that the natural inclusion $\cV\to\cU$ is not a refinement---however, $\cV$ and $\cU$ admit a common refinement that commutes with this natural inclusion.
\end{ex}

\begin{rem}\label{rem: monodromy-free}
 We conclude with some remarks on monodromy-free diagrams and affine presentations.

 A diagram $\cU$ is monodromy-free if and only if for any two objects $U_0$ and $U_1$ of $\cU$, any two paths $\cV$ and $\cV'$ from $U_0$ to $U_1$ and any two morphisms $\psi:W\to\lim\cV$ and $\psi':W\to \lim\cV'$, the lower square of 
 \[
  \xymatrix@R=0.5pc@C=5pc{   &  \lim\cV \ar[r]\ar[rdd]|\hole & U_0 \\  W \ar[ur]^\psi \ar[dr]_{\psi'} \\ & \lim\cV'\ar[uur]\ar[r] & U_1}
 \]
 commutes if the upper square commutes, where the morphisms to $U_0$ and $U_1$ are the beginnings and ends of $\cV$ and $\cV'$. This equivalence can be easily proven by observing that the limit of $\lim \cV\times_{U_0}\lim\cV'$ equals the limit of the concatenation $\cV''$ of $\cV$ and $\cV'$ at $U_0$.

 Verdier introduces in Expose V, section 7.3, of \cite{SGA4-2} the notion of a hypercover to talk about schemes in terms of diagrams of affine schemes. This idea is closely related to the notion of affine presentation, and can be made precise for a subcanonical pretopology on $\cA$, which is embedded into the category $\Sh(\cA)$ of sheaves on $\cA$. 
 
 Namely, an $1$-coskeletal simplicial set $\cU$ of open morphisms in $\cA$ is an affine presentation if and only if $\cU$ is an $1$-hypercover of its colimit $\colim\cU$ in $\Sh(\cA)$. This fact can be seen along the proof of Proposition \ref{thm: relative schemes as colimits of affine presentations}, and it is sufficient to work with affine presentations of this from. We refrain from this terminology, however, since we want to suppress the relation of $\cU$ to its colimit in the sheaf category.
\end{rem}

%%%%%%%%%%%%%%%%%%%%%%%%%%%%%%%%%%%%%%%%%%%%%%%%%%%%%%%%%%%%%%%%%%%%%%%%%%%%%%%%%%%%%%%%%%%%%%%%%%%%%%%%%%%%%%%%%%%%%%%%%%%%%%%%%%%%%%%%%%%%%%%%%%

\section{Relative schemes after To\"en and Vaqui\'e}
\label{section: relative schemes}

We recall the definition of relative schemes from To\"en and Vaqui\'e's paper \cite{Toen-Vaquie09}. Let $\cC$ be a closed symmetric monoidal category that is complete and cocomplete. We denote by $\Comm(\cC)$ the category of commutative, associative and unital semigroups in $\cC$. We call an object $B$ of $\Comm(\cC)$ for short a \emph{commutative monoid} if the context is clear. A \emph{$B$-algebra} is a morphism $B\to C$ of commutative monoids and a \emph{$B$-algebra morphism} is a morphism $C\to C'$ that commutes with the morphisms $B\to C$ and $B\to C'$.

A $B$-module is an object $M$ of $\cC$ together with a morphism $B\times M\to M$ in $\cC$ that satisfies usual axioms ``$(ab.m)=a.(b.m)$'' and ``$1.m=m$'' of a monoid action. A morphism $M\to N$ of $B$-modules is a morphism in $\cC$ that commutes with the actions of $B$ on $M$ and $N$, respectively. This defines the (complete and cocomplete) category $\Mod B$ of $B$-modules. 

Let $f:B\to C$ be a morphism of commutative monoids in $\cC$. The morphism $f$ is \emph{flat} if $\blanc\otimes_B C:\Mod B\to \Mod C$ commutes with finite limits and colimits. The morphism $f:B\to C$ is \emph{of finite presentation} if for all directed systems $\cD$, the canonical map
\[
 \Psi_\cD: \quad \colim \ \Hom_B (C,\cD) \quad \longrightarrow \quad \Hom_B(C,\colim \cD)
\]
is bijective. 

An \emph{affine scheme relative to $\cC$} is an object of the dual category of $\Comm(\cC)$, which we denote by $\Aff_\cC$. Let $\spec:\Comm(\cC)\to \Aff_\cC$ be the anti-equivalence of dual categories. Let $f:B\to C$ be a morphism of commutative monoids in $\cC$. Then $f^\ast:\spec C\to \spec B$ is called a \emph{Zariski open immersion} if $f:B\to C$ is a flat epimorphism of finite presentation.

A family $\{\varphi_i: \spec B_i \to\spec B\}_{i\in I}$ is a \emph{covering family} if all the morphisms $\varphi_i$ are Zariski open immersions and if there is a finite subset $J\subset I$ such that the functor
\[
 \Phi \ = \ \prod_{j\in J} \ \blanc\otimes_BB_j \ : \quad \Mod B \quad \longrightarrow \quad \prod_{j\in J} \ \Mod B_j
\]
is conservative (i.e.\ $f:M\to N$ is an isomorphism if $\Phi(f)$ is an isomorphism). This endows the category $\Aff_\cC$ of affine schemes with a Grothendieck pretopology, called the \emph{Zariski topology of $\Aff_\cC$}. This defines the full subcategory $\Sh(\Aff_\cC)$ of sheaves in the category $\Pre(\Aff_\cC)$ of pre-sheaves. 

We use the characterization of relative schemes as a quotient of a disjoint union of affine schemes by a suitable equivalence relation in order to bypass some notions that are needed in the original definition of a scheme relative to $\cC$ from \cite{Toen-Vaquie09}. Namely, we define a scheme relative to $\cC$ in terms of the following theorem.

\begin{thm}\label{thm: relative schemes as colimits of affine presentations}
 A sheaf $F$ on $\Aff_\cC$ is a \emph{scheme relative to $\cC$} if and only if it is the colimit (in $\Sh(\Aff_\cC)$) of an affine presentation in $\Aff_\cC$.
\end{thm}

\begin{proof}
 Without recalling all definitions from \cite{Toen-Vaquie09}, we outline how the proposition follows from \cite[Prop.\ 2.18]{Toen-Vaquie09}.  Assume that $F$ is a scheme relative to $\cC$. By \cite[Prop.\ 2.18]{Toen-Vaquie09}, $F$ is the quotient of a disjoint union $X=\coprod U_i$ of representable sheaves $U_i$ by an equivalence relation $R$ that satisfies the following two properties.
 \begin{enumerate}
  \item\label{equiv1} If $R_{i,j}$ is the fibre product of $R$ with $U_i\times U_j$ over $X\times X$, then the induced morphism $R_{i,j}\to U_i\times U_j\to U_i$ is an open morphism for every $i$ and $j$.
  \item\label{equiv2} The map $R_{i,i}\to U_i\times U_i$ is isomorphic to the diagonal embedding $U_i\to U_i\times U_i$. 
 \end{enumerate}
 
 We define an affine presentation $\cU$ as follows. Its maximal objects are the representable sheaves $U_i$, which we identify with their corresponding objects in $\Aff_\cC$. Then we choose a covering family $\{W_{i,j,k}\to R_{i,j}\}$ for each distinct pair of indices $i$ and $j$ and include the morphisms $W_{i,j,k}\to U_i$ and $W_{i,j,k}\to U_j$ in $\cU$. It is clear from the definition that $\cU$ is a commutative diagram of open immersions with enough maximal objects whose colimit in $\Aff_\cC$ is $F$.
 
 We are left with verifying that $\cU$ is monodromy-free. We do so by an induction over the length of paths $\cV$ in $\cU$ with equal beginning and end $U$. Since all arrows in $\cU$ map from a non-maximal object to a maximal object, the length of a path in $\cU$ is an even number $2l$ with $l\in\N$.
 
 If $2l=0$, then $\cV$ is the trivial path $U$, and the canonical morphism $\lim(\cV\!\!\!\begin{array}{c}\to\vspace{-8.5pt}\\ \to\end{array} U \bigr) \to\lim\cV$ is evidently an isomorphism.
 
 If $2l>0$ and $U$ is maximal, then $\cV$ is of the form
 \[
  \xymatrix{U & W_1 \ar[l] \ar[r] & U_1 & W_2 \ar[l] \ar[r] & U_2 & \dotsb \ar[l] \ar[r] & U'}
 \]
 Since limits of sheaves coincide with the limits as presheaves, it suffices to show that the canonical map $\lim(\cV(B)\!\!\!\begin{array}{c}\to\vspace{-8.5pt}\\ \to\end{array} U(B) \bigr) \to\lim\cV(B)$ is a bijection for every $B$ in $\Comm(\cC)$. This is the case if and only if for every sequence
 \[
  \xymatrix{x & w_1 \ar@{|->}[l] \ar@{|->}[r] & y_1 & w_2 \ar@{|->}[l] \ar@{|->}[r] & y_2 & \dotsb \ar@{|->}[l] \ar@{|->}[r] & x'}
 \]
 of elements $x,x'\in U(B)$, $y_i\in U_i(B)$ and $w_i\in W_i(B)$, we have that $x=x'$. By the transitivity of the equivalence relation $R$, there exist open immersions $W_1'\to U$ and $W_1'\to U_2$ in $\cU$ and an element $w_1'\in W_1'(B)$ that maps to $x$ and $y_2$, respectively. Thus we obtain a path
 \[
  \xymatrix{U & W_1' \ar[l] \ar[r] & U_2 & \dotsb \ar[l] \ar[r] & U'}
 \]
 of length $2l-2$ and a sequence of elements
 \[
  \xymatrix{x & w'_1 \ar@{|->}[l] \ar@{|->}[r] & y_2 & \dotsb \ar@{|->}[l] \ar@{|->}[r] & x'}.
 \]
 By the induction hypothesis, we have $x=x'$. 
 
 The case that $U$ is not a maximal element is treated analogously, but with all arrows inverted. This completes the proof that $\cU$ is an affine presentation whose colimit is $F$.
 
% By replacing $\cU$ by $\overline\cU$, as constructed above, we can assume that $\cU$ is closed under finite limits without changing $\cU_{\max}$ nor the validity of $F=\colim\cU$. As explained in Remark \ref{rem: monodromy-free}, it suffices to consider test diagrams of the form
% \[
%  \xymatrix@R=0.5pc@C=5pc{   &  V \ar[r]^(0.45){\varphi_0}\ar[rdd]^(0.4){\varphi_1}|\hole & U_0 \\  W \ar[ur]^\psi \ar[dr]_{\psi'} \\ & V'\ar[uur]_(0.4){\varphi_0'}\ar[r]_(0.45){\varphi_1'} & U_1}
% \]
% where $\varphi_0,\varphi_1,\varphi_0',\varphi_1'$ are in $\cU$ and $\psi$ and $\psi'$ are in $\Aff_\cC$. We can consider all objects in this diagrams as presheaves on $\Comm(\cC)$ and verify the commutativity of the upper and lower squares by taking values in objects $B$ of $\Comm(\cC)$.
 
% Assume that the upper diagram commutes. Then we have for every $B$ and every $w\in W(B)$ that $y=\varphi_{0,B}\circ \psi_B(w)=\varphi'_{0,B}\circ\psi'_B(w)$. Let $x=\varphi_{1,B}\circ\psi_B(w)$ and $z=\varphi'_{1,B}\circ\psi'_B(w)$. By the definition of $\cU$, this means that $x\sim y$ and $y\sim z$ with respect to the equivalence relation $R(B)$ on $X(B)$. By transitivity, we have $x\sim z$, which means that $\varphi_{1,B}\circ\psi_B(w)=\varphi'_{1,B}\circ\psi'_B(w)$. This shows that the lower square of our test diagram commutes and verifies that $\cU$ is monodromy-free. Thus $\cU$ is an affine presentation whose colimit is $F$.

 Conversely, if the sheaf $F$ on $\Aff_\cC$ has an affine presentation $\cU$, then we define an equivalence relation $R$ on $X=\coprod_{U\in \cU_{\max}} U$ as follows. As a first step, we replace $\cU$ by $\gen\cU$ and embed $\gen\cU$ into the category $\Sets^{\gen\cU}$ of presheaves in $\gen\cU$. We define $\overline{\cU}$ as the closure of $\gen\cU$ under finite limits in $\Sets^{\gen\cU}$. To avoid set theoretic problems, we replace $\overline\cU$ by an equivalent, but small category. Moreover, we can assume that the set of maximal objects in $\overline\cU$ is $\cU_{\max}$. It is clear that $\overline\cU$ is an affine presentation.
 
 For $U,V\in\cU_{\max}=\overline\cU_{\max}$, we define $R_{U,V}$ as the subsheaf of $U\times V$ that is generated by the image of $\coprod W$ where the coproduct ranges over all $W$ in $\overline\cU$ with morphisms $W\to U$ and $W\to V$ in $\overline\cU$. By the definition of a Zariski open subsheaf of an affine scheme in \cite{Toen-Vaquie09}, $R_{U,V}$ is an open subsheaf of both $U$ and $V$. We define $R$ as the disjoint union $\coprod R_{U,V}$ over all $U,V\in\cU$, which is a subsheaf of $X\times X$. 
 
% Conditions (a), (b) and (c) of \cite[Prop.\ 2.18]{Toen-Vaquie09} are clear by construction. Condition (d) follows once we know 
 We begin with showing that $R$ is an equivalence relation on $X$. Reflexivity and symmetry of $R$ are obvious, and transitivity can be established as follows.
 
 Considering the objects $U$ of $\cU$ as sheaves, we can test transitivity for the sets $U(B)$ where $B$ is an object of $\Comm(\cC)$. Given three objects $U_1$, $U_2$ and $U_3$ in $\cU$ and elements $x_i\in U_i(B)$ (for $i=1,2,3$), we have $x_i\sim x_j$ if and only if there is an $w_{i,j}\in R_{i,j}(B)$ such that $x_i=\varphi_{i,j,B}(w_{i,j})$ and $x_j=\varphi_{j,i,B}(w_{i,j})$ where $R_{i,j}=R_{U_i,U_j}$ and $\varphi_{i,j,B}:R_{i,j}(B)\to U_i(B)$ is induced by the inclusion $R_{i,j}\subset U_i\times U_j$, followed by the projection onto $U_i$.
 
 We assume that $x_1\sim x_2$ and $x_2\sim x_3$, witnessed by elements $w_{1,2}\in R_{1,2}(B)$ and $w_{2,3}\in R_{2,3}(B)$, and consider the following diagram 
 \[
  \xymatrix@R=0pc@C=6pc{                                     &                                                                & U_1(B) \\ 
                                                             & R_{1,2}(B) \ar[ru]^{\varphi_{1,2,B}} \ar[rd]^{\varphi_{2,1,B}}          \\ 
                         R_{1,2}(B)\times_{U_2(B)}R_{2,3}(B) \ar[ru]^{\psi_{1,B}} \ar[rd]_{\psi_{3,B}} &                      & U_2(B) \\ 
                                                             & R_{2,3}(B) \ar[ru]^{\varphi_{2,3,B}} \ar[rd]^{\varphi_{3,2,B}}          \\ 
                                                             &                                                                & U_3(B).      }
 \]
 Using standard arguments about coverings of sheaves, we can reduce this to the situation in which all of these sheaves are representable, in which case there is an $w_{1,3}$ in $R_{1,2}(B)\times_{U_2(B)}R_{2,3}(B)$ such that $\psi_{1,B}(w_{1,3})=w_{1,2}$ and $\psi_{3,B}(w_{1,3})=w_{2,3}$. By the definition of $R_{1,3}$, we have $w_{1,3}\in R_{1,3}(B)$, which shows that $x_1\sim x_3$ as desired.
 
 This shows that $R$ is an equivalence relation on $X=\coprod U$. By construction, it satisfies property \eqref{equiv1}, as stated in the beginning of the proof. Using that $\cU$ is monodromy-free, we can show that two different points of $U_i(B)$ cannot be identified, which establishes \eqref{equiv2}. We leave the details of this last argument to the reader. We conclude that $F=X/R$ is a scheme relative to $\cC$.
\end{proof}

Let $\Sch_\cC$ be the full subcategory of $\Sh(\Aff_\cC)$ whose objects are schemes relative to $\cC$.

\begin{prop}\label{prop: properties 1 to 6 for relative schemes}
 The embeddings $\Aff_\cC\to\Sch_\cC\to\Sh(\Aff_\cC)$ satisfy Hypothesis \ref{hypothesis}.
\end{prop}

\begin{proof}
 Property \eqref{part1} of Hypothesis \ref{hypothesis} is satisfied since $\lim\Hom(\blanc,B_i)=\Hom(\blanc,\lim B_i)$ by the universal property of limits and since schemes are stable under products of sheaves, see \cite[Prop.\ 2.18]{Toen-Vaquie09}. Property \eqref{part2} is established by Proposition \ref{thm: relative schemes as colimits of affine presentations}. Property \eqref{part3} is shown in \cite[Cor.\ 2.11]{Toen-Vaquie09} for affine schemes. By the definition of a scheme, it is a sheaf on $\Aff_\cC$. 

 Let $\cU\to\cV$ be a refinement and $X=\colim\cU$ and $Y=\colim\cV$ the corresponding schemes. Since $X$ and $Y$ are sheaves on $\Aff_\cC$, a morphism $W\to Y$ is represented by an affine presentation $\cW$ of $W$ and a morphism $\cW\to\cV$. We can assume that $\cW=\Atlas(\cW)$ since the colimit $X$ of an affine presentation $\cW$ only depends on the maximal elements and a covering of their pairwise intersections $U\times_X V$, which are given by the elements of $\cW_{U,V}$. By the base change property of Grothendieck pretopologies, the refinement $\cU\to\cV$ defines a refinement $\cW'\to\cW$ for $\cW'=\cU\times_\cV\cW$ and a morphism $\cW'\to\cU$. By the local character of Grothendieck pretopologies, $\colim\cW'=W$. With this, it is easy to verify that the induced morphism $\colim\cU\to\colim\cV$ is an isomorphism. This establishes property \eqref{part4}. 

 Let $\cU$ and $\cV$ be affine presentations such that $\colim\cU\simeq X\simeq \colim \cV$. We define a common refinement $\cW$ of $\cU$ and $\cV$ as follows. By \eqref{part4}, we can assume that $\cU$ and $\cV$ are affine atlases. For every $U\in\cU$ and $V\in\cV$, the common open subscheme $W=U\times_X V$ of $U$ and $V$ can be covered by affine open subschemes $W_i$. If $U\to U'$ is a morphism in $\cU$, $V\in\cV$ and $\{W_i\}$ and $\{W_j'\}$ are the coverings of $U\times_XV$ and $U'\times_XV$, respectively, then we can refine the covering $\{W_i\}$ such that the induced morphism $U\times_XV\to U'\times_XV$ restricts to morphisms $W_i\to W_{j(i)}$ for all $i$. The same argument holds for morphisms $V\to V'$. All the $W_i$ together with the morphisms $W_i\to W_{j(i)}$ yield a diagram $\cW$, which is commutative since there are no morphisms to compare. Since open immersions are stable under base change (\cite[Lemme 2.13]{Toen-Vaquie09}) all morphisms of $\cW$ are open immersions. The maximal elements of $\cW$ are the $W_i$ that cover $U\times_XV$ for some $U\in\cU_{\max}$ and $V\in\cV_{\max}$. The monodromy condition follows easily from the monodromy condition for $U$ and $V$. Therefore $\cW$ is an affine presentation, and indeed an affine atlas, that is a common refinement of $\cU$ and $\cV$ with respect to the canonical morphisms $\cW\to\cU$ and $\cW\to \cV$. This establishes property \eqref{part5}.

 By similar arguments, we find for a given morphism $\colim\cU\to\colim\cV$ of schemes an refinement $\cU'\to\cU$ and a morphism $\cU'\to\cV$ that induces this morphism of schemes. This is property \eqref{part6}.
\end{proof}

%%%%%%%%%%%%%%%%%%%%%%%%%%%%%%%%%%%%%%%%%%%%%%%%%%%%%%%%%%%%%%%%%%%%%%%%%%%%%%%%%%%%%%%%%%%%%%%%%%%%%%%%%%%%%%%%%%%%%%%%%%%%%%%%%%%%%%%%%%%%%%%%%%

\section{Blueprints}
\label{section: blueprints}

We recall the definition of a blueprint. Note that we follow the convention of \cite{blueprints2}, i.e.\ all blueprints are \emph{proper} and \emph{with zero}, according to the terminology in \cite{blueprints1} .

By a \emph{monoid with zero}, we mean a multiplicatively written commutative semigroup $A$ with a neutral element $1$ and an absorbing element $0$, which are characterized by the properties $1\cdot a=a$ and $0\cdot a=0$ for all $a\in A$. A \emph{morphism of monoids with zero} is a multiplicative map $f:A_1\to A_2$ that maps $1$ to $1$ and $0$ to $0$.

A \emph{blueprint $B$}\index{Blueprint} is a monoid $A$ with zero together with a \emph{pre-addition}\index{Pre-addition} $\cR$, i.e.\ $\cR$ is an equivalence relation on the semiring $\N[A]=\{\sum a_i|a_i\in A\}$ of finite formal sums of elements of $A$ that satisfies the following axioms (where we write $\sum a_i\=\sum b_j$ whenever $(\sum a_i,\sum b_j)\in\cR$):
\begin{enumerate}
 \item\label{ax1} $\sum a_i\=\sum b_j$ and $\sum c_k\=\sum d_l$ imply $\sum a_i+\sum c_k\=\sum b_j+\sum d_l$ and $\sum a_ic_k\=\sum b_jd_l$,
 \item\label{ax2} $0\=(\text{empty sum})$, and 
 \item\label{ax3} if $a\= b$, then $a=b$ (as elements in $A$).
\end{enumerate}
A \emph{morphism $f:B_1\to B_2$ of blueprints}\index{Morphism of blueprints} is a multiplicative map $f:A_1\to A_2$ between the underlying monoids of $B_1$ and $B_2$, respectively, with $f(0)=0$ and $f(1)=1$ such that for every relation $\sum a_i\=\sum b_j$ in the pre-addition $\cR_1$ of $B_1$, the pre-addition $\cR_2$ of $B_2$ contains the relation $\sum f(a_i)\=\sum f(b_j)$. Let $\bp$ be the category of blueprints.

In the following, we write $B=\bpquot A\cR$ for a blueprint $B$ with underlying monoid $A$ and pre-addition $\cR$. We adopt the conventions used for rings: we identify $B$ with the underlying monoid $A$ and write $a\in B$ or $S\subset B$ when we mean $a\in A$ or $S\subset A$, respectively. Further, we think of a relation $\sum a_i\=\sum b_j$ as an equality that holds in $B$ (without the elements $\sum a_i$ and $\sum b_j$ being defined, in general). 

Given a set $S$ of relations, there is a smallest equivalence relation $\cR$ on $\N[A]$ that contains $S$ and satisfies Axioms \eqref{ax1} and \eqref{ax2}. If $\cR$ satisfies also Axiom \eqref{ax3}, then we say that $\cR$ is the pre-addition generated by $S$, and we write $\cR=\gen S$. In particular, every monoid $A$ with zero has a smallest pre-addition $\cR=\gen\emptyset$.

More generally, let $A$ be a monoid with zero and $\cR$ an equivalence relation on $\N[A]$ that satisfies Axioms \eqref{ax1} and \eqref{ax2}. We can form the quotient set $A'=A/\sim$ where $a\sim b$ whenever $a\=b$. Then $A'$ inherits the structure of a monoid by the multiplicativity of $\cR$, and the image $\cR'$ of $\cR$ in $\N[A']\times\N[A']$ is a pre-addition on $A$, satisfying Axiom \eqref{ax3} (see Lemma 1.6 in \cite{blueprints1} for more details on the construction of the proper quotient).  We say that $\bpquot A\cR$ is a \emph{representation} of the blueprint $\bpquot{A'}{\cR'}$, and we say that the representation $\bpquot A\cR$ of $B=\bpquot{A'}{\cR'}$ is \emph{proper} if $A=A'$. 

A (commutative) semiring $R$ (with additive and multiplicative unit) defines the blueprint $B=\bpquot{A}{\cR}$ where $A=R$ as multiplicative monoid and $\cR=\{\sum a_i\=\sum b_j|\sum a_i=\sum b_j\text{ in }R\}$. This construction is functorial in $R$ and provides a fully faithful embedding of semirings into blueprints. In the following, we often consider semirings as blueprints. This embedding admits a left adjoint and left inverse functor, which associates with a blueprint $B=\bpquot{A}{\cR}$ its \emph{universal semiring} $B^+=\N[A]/\cR$. Note that $B^+$ is well-defined as semiring since $\cR$ is an equivalence relation on $\N[A]$ that respects addition and multiplication. Note further that $B$ comes with an inclusion $B\to B^+$ of blueprints that is universal for all morphisms from $B$ into a semiring.

%%%%%%%%%%%%%%%%%%%%%%%%%%%%%%%%%%%%%%%%%%%%%%%%%%%%%%%%%%%%%%%%%%%%%%%%%%%%%%%%%%%%%%%%%%%%%%%%%%%%%%%%%%%%%%%%%%%%%%%%%%%%%%%%%%%%%%%%%%%%%%%%%%

\section{Blue \texorpdfstring{$B$}{B}-modules}
\label{section: blue modules}

In this section, we introduce the notion of a blue $B$-module for a blueprint $B$.

Let $M$ be a pointed set. We denote the base point of $M$ by $\ast$. A \emph{pre-addition on $M$} is an equivalence relation $\cP$ on the semigroup $\N[M]=\{\sum a_i|a_i\in M\}$ of finite formal sums in $M$ with the following properties (as usual, we write $\sum m_i\=\sum n_j$ if $\sum m_i$ stays in relation to $\sum n_j$):
\begin{enumerate}
 \item $\sum m_i\=\sum n_j$ and $\sum p_k\=\sum q_l$ imply $\sum m_i+\sum p_k\=\sum n_j+\sum q_l$,
 \item $\ast\=(\text{empty sum})$, and 
 \item if $m\=n$, then $m=n$ (in $M$).
\end{enumerate}
Let $B=\bpquot A\cR$ be a blueprint. A \emph{blue $B$-module} is a set $M$ together with a pre-addition $\cP$ and a \emph{$B$-action $B\times M\to M$}, which is a map $(b,m)\mapsto b.m$ that satisfies the following properties:
\begin{enumerate}
 \item $1.m=m$, $0.m=\ast$ and $a.\ast=\ast$,
 \item $(ab).m=a.(b.m)$, and
 \item $\sum a_i\=\sum b_j$ and $\sum m_k\=\sum n_l$ imply $\sum a_i.m_k\=\sum b_j.n_l$.
\end{enumerate}
A \emph{morphism of blue $B$-modules $M$ and $N$} is a map $f:M\to N$ such that
\begin{enumerate}
 \item $f(a.m)=a.f(m)$ for all $a\in B$ and $m\in M$ and 
 \item whenever $\sum m_i\=\sum n_j$ in $M$, then $\sum f(m_i) \= \sum f(n_j)$ in $N$. 
\end{enumerate}
This implies in particular that $f(\ast)=\ast$. We denote the category of blue $B$-modules by $\Mod B$. Note that in case of a ring $B$, every $B$-module is a blue $B$-module, but not vice versa.

\begin{lemma}
 The category $\Mod B$ is closed, complete and cocomplete. The trivial blue module $0=\{\ast\}$ is an initial and terminal object of $\Mod B$. 
\end{lemma}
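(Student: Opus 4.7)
My plan is to verify each assertion by an explicit construction modeled on the analogous constructions for sets, pointed sets and modules over a ring, plus bookkeeping of the pre-addition at each step.

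First I would dispatch the zero object. The one-point set $\{\ast\}$ with the only possible $B$-action and pre-addition (everything collapses to $\ast$) is terminal because the constant map is the unique morphism to it. It is also initial since any morphism $f$ from it must satisfy $f(\ast)=f(0.\ast)=0.f(\ast)=\ast$, so $f$ is unique. Hence $\{\ast\}$ is a zero object.

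Next, completeness. For a small diagram $\{M_i\}_{i\in I}$ in $\Mod B$, I would set the product's underlying pointed set to be the usual product $\prod M_i$ with basepoint $(\ast)_i$, componentwise $B$-action, and pre-addition defined by declaring $\sum (m_{i,k})_i \= \sum (n_{i,l})_i$ iff $\sum m_{i,k}\=\sum n_{i,l}$ holds in $M_i$ for every $i$; the three axioms for a pre-addition and compatibility with the $B$-action follow coordinatewise. For equalizers of $f,g:M\to N$, the subset $\{m\in M\mid f(m)=g(m)\}$ inherits a $B$-action and the restricted pre-addition (one checks that the equalizer subset is closed under the relation: if $\sum m_k\=\sum n_l$ in $M$ with all $m_k,n_l$ in the equalizer, then applying $f$ and $g$ yields the same relation in $N$). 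Products and equalizers together give all small limits, so $\Mod B$ is complete.

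For cocompleteness, I would construct coproducts as wedge sums $\bigvee M_i$ (disjoint union with basepoints identified), equipped with the obvious $B$-action and with the pre-addition generated (in the sense of Section~\ref{section: blueprints}, but carried over to modules) by the union of the pre-additions of the summands; Axiom~\emph{(iii)} is preserved under passage to the proper quotient exactly as in the construction of quotients of blueprints. Coequalizers of $f,g:M\to N$ arise by taking the quotient of the underlying pointed set of $N$ by the equivalence relation identifying $f(m)$ with $g(m)$, and then generating the pre-addition from that of $N$ on the quotient; again the proper-quotient construction ensures Axiom~\emph{(iii)}. Coproducts plus coequalizers yield all small colimits.

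Closedness is the main obstacle, since one must produce a tensor product $\otimes_B$ and an internal hom and verify the adjunction. For the internal hom, $[M,N]$ should have as underlying pointed set $\Hom_B(M,N)$ with basepoint the zero morphism, $B$-action $(b.f)(m):=f(b.m)=b.f(m)$, and pre-addition $\sum f_k\=\sum g_l$ iff $\sum f_k(m)\=\sum g_l(m)$ in $N$ for every $m\in M$; the axioms are inherited from $N$. For $M\otimes_BN$, I would take the proper quotient of the free pointed set on symbols $m\otimes n$ (with $m\otimes\ast=\ast\otimes n=\ast$) by the pre-addition generated by the standard bilinearity relations $b.(m\otimes n)\=(b.m)\otimes n\= m\otimes(b.n)$ together with $\sum m_k\otimes n\=\sum n_k\otimes n$ whenever $\sum m_k\=\sum n_k$ in $M$ (and symmetrically for $N$). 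The universal property then is that $B$-module morphisms $M\otimes_BN\to P$ correspond bijectively to $B$-bilinear maps $M\times N\to P$ respecting both pre-additions. The adjunction $\Hom_B(M\otimes_BN,P)\cong\Hom_B(M,[N,P])$ follows from this universal property by the usual currying argument, sending a bilinear map $\varphi$ to $m\mapsto\varphi(m,-)$; compatibility with pre-additions on both sides is precisely what the generators above enforce. Symmetry and associativity of $\otimes_B$, and the unit $B$ (viewed as a blue $B$-module with its own pre-addition) being a $\otimes_B$-identity, are straightforward from the generators-and-relations presentation, completing the verification that $\Mod B$ is a closed symmetric monoidal category.
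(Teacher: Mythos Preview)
Your proof is correct and supplies explicitly the standard constructions that the paper merely cites from \cite{CLS12} (the analogous case of $A$-sets for a monoid $A$, which carries over verbatim). Two small remarks. Your parenthetical on equalizers is slightly garbled: the only thing to check is that the restricted pre-addition on $\{m\in M:f(m)=g(m)\}$ still satisfies axioms (i)--(iii), which is immediate; the clause about ``applying $f$ and $g$'' to the relation is not needed for this. More relevant for the sequel, the paper constructs $M\otimes_BN$ in the next lemma directly as the quotient of $M\times N$ by the equivalence relation $(b.m,n)\sim(m,b.n)$, so that the surjectivity of $M\times N\to M\otimes_BN$ is built in; this surjectivity is used essentially in Proposition~\ref{prop: flat equals localization} to show that flat morphisms are localizations. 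Your generators-and-relations presentation of the tensor product yields the same object, but you should note that it does so---i.e., that every element is a pure tensor---since this is the feature of blue tensor products (as opposed to ordinary module tensor products) that drives the later argument.
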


\begin{proof}
 All arguments are essentially the same as in the case of $A$-sets. We refer to \cite[Section 2.2.1]{CLS12} for the facts that $\Mod B$ is closed and $0$ is initial and terminal. The construction of limits and colimits can be found in \cite[Prop.\ 2.13]{CLS12}.
 
 To conclude, we comment on structure of the morphism set $\Hom_B(M,N)$ as blue $B$-module. The scalar multiplication of a morphism $f:M\to N$ by an element $a\in B$ is given by $a.f:m\to a.f(m)$, and the pre-addition of $\Hom_B(M,N)$ consists of the relations $\sum f_i\=\sum g_j$ for which $\sum f_i(m)\=\sum g_j(m)$ for all $m\in M$. It is straight-forward to verify that these definitions satisfy the axioms of a blue $B$-module.
\end{proof}

\begin{lemma}\label{lemma: tensor products}
 The category $\Mod B$ has tensor products $M\otimes_B N$, which are characterized by the universal property that every bi-$B$-linear morphism $M\times N\to P$ factors through a unique $B$-linear map $M\otimes_B N\to P$. The canonical map $M\times N\to M\otimes_B N$ is surjective. The functor $\blanc\otimes_B M$ is left adjoint to $\Hom_B(M,\blanc)$. Together with the tensor product, $\Mod B$ is a symmetric monoidal category.
\end{lemma}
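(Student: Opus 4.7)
The plan is to construct $M\otimes_B N$ explicitly, first as a pointed set with a $B$-action, then by endowing it with a pre-addition, and to derive all remaining properties from the universal property.

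First I would form the pointed set $T=(M\wedge N)/\!\sim$, where $M\wedge N$ is the smash product of pointed sets (i.e.\ $M\times N$ with the subset $\{(\ast,n)\}\cup\{(m,\ast)\}$ collapsed to the basepoint), and $\sim$ is the equivalence relation generated by $(b.m,n)\sim(m,b.n)$ for all $b\in B$, $m\in M$, $n\in N$. Writing $m\otimes n$ for the class of $(m,n)$, the assignment $b.(m\otimes n):=(b.m)\otimes n=m\otimes(b.n)$ gives $T$ a well-defined $B$-action. Next I would put on $T$ the pre-addition $\cP$ generated (in the sense of Section~\ref{section: blueprints}) by the relations $\sum_i m_i\otimes n\=\sum_j m'_j\otimes n$ whenever $\sum_i m_i\=\sum_j m'_j$ holds in $M$, and $\sum_i m\otimes n_i\=\sum_j m\otimes n'_j$ whenever $\sum_i n_i\=\sum_j n'_j$ holds in $N$, and then pass to the proper quotient as described after \eqref{ax3} in Section~\ref{section: blueprints}. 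The result is a blue $B$-module, which I denote $M\otimes_B N$, equipped with a canonical map $\tau:M\times N\to M\otimes_B N$ sending $(m,n)$ to $m\otimes n$. By construction every element of $M\otimes_B N$ is a single tensor $m\otimes n$ (formal sums only appear inside the pre-addition), so $\tau$ is surjective.

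The universal property is then verified directly: given a bi-$B$-linear morphism $f:M\times N\to P$ (i.e.\ a map that is a $B$-module morphism in each variable separately), the assignment $m\otimes n\mapsto f(m,n)$ respects the quotient $\sim$ by $B$-bilinearity in each slot, and it respects the generating relations of $\cP$ by the morphism axioms for $f$ in each variable. Hence it descends to a well-defined $B$-module morphism $\tilde f:M\otimes_B N\to P$, and it is the unique such extension because $\tau$ is surjective. The adjunction $\Hom_B(M\otimes_B N,P)\cong\Hom_B(M,\Hom_B(N,P))$ is then immediate from the universal property: an element of the right-hand side is exactly a bi-$B$-linear map $M\times N\to P$, using the closed structure of $\Mod B$ from the previous lemma.

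For the symmetric monoidal structure, I would take $B$ (as a blue module over itself via multiplication) as the unit. The unitor $B\otimes_B M\to M$ sends $b\otimes m\mapsto b.m$ and is inverse to $m\mapsto 1\otimes m$; the braiding $M\otimes_B N\to N\otimes_B M$ is induced by $(m,n)\mapsto n\otimes m$; the associator $(M\otimes_B N)\otimes_B P\to M\otimes_B(N\otimes_B P)$ is induced by $((m,n),p)\mapsto(m,(n,p))$. Each of these is well-defined and invertible by appeal to the universal property applied twice (once to construct the map, once to construct its inverse). The coherence pentagon and hexagon then follow automatically since both sides agree on elementary tensors $m\otimes n\otimes p$ (or $m\otimes n\otimes p\otimes q$), and elementary tensors generate. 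I expect the only technical nuisance to be the bookkeeping involved in verifying that the pre-addition on $M\otimes_B N$ is compatible with the equivalence relation $\sim$ and survives the passage to the proper quotient; once this is in hand, everything else is a formal consequence of the universal property, exactly as for tensor products of ordinary modules over a ring.
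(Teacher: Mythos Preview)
Your proposal is correct and follows essentially the same construction as the paper: form the quotient of $M\times N$ by the relation generated by $(b.m,n)\sim(m,b.n)$, equip it with the appropriate pre-addition, and verify the universal property. The paper's proof is a two-sentence sketch that defers the details to \cite{CLS12}; your version spells out the pre-addition and the passage to the proper quotient explicitly, and your use of the smash product $M\wedge N$ in place of $M\times N$ is harmless since the identifications $(m,\ast)\sim(\ast,n)$ are already forced by $(m,0.n)\sim(0.m,n)$.
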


\begin{proof}
 The blue $B$-module $M\otimes_B N$ can be defined as the $\bpquot{M\times N}{\cP}$ where $\cP$ is the pre-addition generated by the relations
 \[
  (b.m,p) \= (m,b.p), \qquad \sum (m_i,p) \= \sum (n_j,p), \qquad \sum (m,p_k) \= \sum (m,q_l)
 \]
 for all $b\in B$, $m,m_i,n_j\in M$, $p,p_k,q_l\in N$ for which $\sum m_i\=\sum n_j$ holds in $M$ and $\sum p_k\=\sum q_l$ holds in $N$. It is easily verified that this defines a blue $B$-module that satisfies the desired properties; cf.\ section 2.2.3 of \cite{CLS12} for the analogous case of $A$-sets where $A$ is a monoid.
\end{proof}

Let $B$ be a blueprint. We denote the category of $B$-algebras by $\bp_B$ and its morphism sets by $\Hom_B(C,C')$. Let $\Fun$ be the blueprint $\bpgenquot{\{0,1\}}{\emptyset}$, which is an initial object in $\bp$. Then the association $(\Fun\to B)\mapsto B$ establishes an equivalence between $\bp_\Fun$ and $\bp$.

\begin{lemma}
 Let $B$ be a blueprint. Then the category $\bp_B$ is equivalent to the category of commutative monoids in $\Mod B$.
\end{lemma}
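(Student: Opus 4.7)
The plan is to construct mutually quasi-inverse functors between $\bp_B$ and $\Comm(\Mod B)$. By the universal property in Lemma \ref{lemma: tensor products}, $B$ itself is the unit of the monoidal structure on $\Mod B$, so the data of a commutative monoid in $\Mod B$ is a blue $B$-module $C$ equipped with morphisms $\mu:C\otimes_B C\to C$ and $\eta:B\to C$ in $\Mod B$ satisfying the usual associativity, commutativity, and unit diagrams.

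\emph{From a $B$-algebra to a monoid in $\Mod B$.} Given $f:B\to C$, I view $C$ as a blue $B$-module via $b.c=f(b)\cdot c$; all blue module axioms follow from blueprint axiom (1) for $C$ and the fact that $f$ is a blueprint morphism. The blueprint multiplication $C\times C\to C$ is bi-$B$-linear and preserves the pre-addition in each variable (again by blueprint axiom (1)), so by Lemma \ref{lemma: tensor products} it factors uniquely through a morphism $\mu:C\otimes_B C\to C$ in $\Mod B$. Taking $\eta=f$, associativity, commutativity and unitality of $(\mu,\eta)$ are inherited from the multiplicative monoid structure of $C$.

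\emph{From a monoid in $\Mod B$ to a $B$-algebra.} Conversely, given $(C,\mu,\eta)$, I define a multiplication on the underlying set of $C$ by $c\cdot c'=\mu(c\otimes c')$, with multiplicative identity $\eta(1_B)$ and absorbing element $\ast$ (which plays the role of $0_C$, since $0.c=\ast$ and $\mu$ sends base point to base point). Axioms (2) and (3) for the pre-addition on $C$ viewed as a blueprint hold tautologically from the blue $B$-module axioms, and the additive half of axiom (1) is immediate. For the multiplicative half of axiom (1), I use that for each fixed $n\in C$ the map $m\mapsto m\otimes n$ is a morphism in $\Mod B$ (and symmetrically in the other variable), so whenever $\sum a_i\=\sum b_j$ and $\sum c_k\=\sum d_l$ in $C$, the chain
\[
 \sum_{i,k}(a_i\otimes c_k)\ \=\ \sum_{j,k}(b_j\otimes c_k)\ \=\ \sum_{j,l}(b_j\otimes d_l)
\]
holds in $C\otimes_B C$, and applying $\mu$ yields $\sum a_i c_k\=\sum b_j d_l$ in $C$. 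The morphism $\eta:B\to C$ is then a blueprint morphism because it is a unit morphism in $\Mod B$ and because the original $B$-action on $C$ agrees with $b.c=\eta(b)\cdot c$ by the unit axiom for $\mu$.

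\emph{Mutual inverses and functoriality.} Both constructions are manifestly functorial, since a $B$-algebra morphism $C\to C'$ is precisely a morphism in $\Mod B$ intertwining the respective $\mu$'s and $\eta$'s, and conversely. That the compositions return the original data follows directly from the constructions: the multiplication recovered via $\mu$ is the starting blueprint multiplication, and vice versa. The main technical obstacle is the multiplicative part of blueprint axiom (1) in the backward direction, as this requires understanding how the pre-addition on $C\otimes_B C$ is generated from the pre-additions of the factors via the bi-linearity built into the universal property of $\otimes_B$.
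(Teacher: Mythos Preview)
Your proof is correct and follows the same approach as the paper's: construct the evident functor sending $f:B\to C$ to the blue $B$-module $C$ with $b.c=f(b)c$ and monoid structure coming from the multiplication of $C$, and verify it is an equivalence. The paper compresses the entire verification into the single phrase ``it is immediately verified that this functor is an equivalence of categories'', whereas you have actually carried out the inverse construction and the check of blueprint axiom~(i) for the multiplication, which is the only step requiring any care.
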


\begin{proof}
 A $B$-algebra $f:B\to C$ is a blue $B$-module w.r.t.\ the multiplication defined by $b.c=f(b)c$ for $b\in B$ and $c\in C$. The multiplication of $C$ turns $C$ into a commutative monoid in $\Mod B$. A morphism $C\to C'$ of $B$-algebras induces naturally a morphism of the associated commutative monoids in $\Mod B$. It is immediately verified that this functor is an equivalence of categories.
\end{proof}

%%%%%%%%%%%%%%%%%%%%%%%%%%%%%%%%%%%%%%%%%%%%%%%%%%%%%%%%%%%%%%%%%%%%%%%%%%%%%%%%%%%%%%%%%%%%%%%%%%%%%%%%%%%%%%%%%%%%%%%%%%%%%%%%%%%%%%%%%%%%%%%%%%

\section{Blueprint morphisms of finite presentation}
\label{section: blueprint morphisms of finite presentation}

Recall from section \ref{section: relative schemes} that a morphism $f:B\to C$ is of finite presentation if for all directed systems $\cD$, the canonical map
\[
 \Psi_\cD: \quad \colim \ \Hom_B (C,\cD) \quad \longrightarrow \quad \Hom_B(C,\colim \cD)
\]
is bijective. In this section, we characterize blueprint morphisms $f:B\to C$ of finite presentation in terms of the finiteness of certain sets of generators. 

Let $B=\bpquot A\cR$ be a blueprint. The we denote by $B[T_i]_{i\in I}$ the free blueprint over $B$ in the indeterminants $T_i$, cf.\ 1.12 of \cite{blueprints1}. Its elements are $\{0\}$ and all monomials $b\prod T_i^{n_i}$ with coefficients $b\in B-\{0\}$ where $n_i\geq0$ with $n_i=0$ for all but finitely many $i\in I$. The blueprint $B$ can be seen as the subset of all constants $b\prod T_i^0$, and the pre-addition of $B[T_i]$ is generated by the  image of $\cR$ in $B[T_i]$.

A $B$-algebra $f:B\to C$ is \emph{generated by a subset $\{b_i\}_{i\in I}$ of $C$} if there are for every element $c\in C$ finitely many $a_l\in B$ and not necessarily different indices in $i_l\in I$ such that $c\=\sum f(a_l)b_{i_l}$. A \emph{presentation of a $B$-algebra $f:B\to C$} is pair $(\fb,S)$ where $\fb=\{b_i\}_{i\in I}$ generates $f:B\to C$ and $S$ is a set of relations on the free $B$-algebra $B[T_i]_{i\in I}$ that satisfies the following property: for $\tilde B=\bpgenquot{B[T_i]}{S}$, there is a monomorphism $\tilde f:\tilde B\to C$ of $B$-algebras that sends $T_i$ to $b_i$ and the pre-addition of $C$ is generated by the image of the pre-addition of $\tilde B$. Note that the underlying monoid of $\tilde B$ is in general a proper quotient of $B[T_i]$.

A $B$-algebra $f:B\to C$ is \emph{algebraically of finite presentation} if there is a presentation $(\fb,S)$ of $f:B\to C$ with finite $\fb$ and finite $S$. We say that such a pair $(\fb,S)$ is a \emph{finite presentation} for $f:B\to C$.

\begin{prop}\label{prop: blueprint morphisms od finite presentation}
 A morphism $f:B\to C$ of blueprints is of finite presentation if and only if it is algebraically of finite presentation.
\end{prop}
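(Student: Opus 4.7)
The plan is to prove both directions. The backward implication (algebraically f.p.\ implies f.p.) is a standard finite-generation argument about filtered colimits, while the forward implication (f.p.\ implies algebraically f.p.) proceeds by writing $C$ as a filtered colimit of algebraically f.p.\ $B$-algebras, deducing that $C$ is a retract of one of them, and then showing that retracts of algebraically f.p.\ blueprints are algebraically f.p.

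For the backward direction, write $C=\bpgenquot{B[T_1,\ldots,T_n]}{S}$ with $S$ finite. A $B$-algebra morphism $C\to D$ amounts to a choice of images $d_i\in D$ of the $T_i$ subject to the finitely many relations in $S$. For a directed system $\cD=\{D_\alpha\}$ with colimit $D^\infty$, I will use the standard description of filtered colimits in $\bp_B$: the underlying pointed set of $D^\infty$ is the filtered colimit of the underlying pointed sets, and a pre-addition relation $\sum x_k\=\sum y_l$ in $D^\infty$ lifts to a relation in some $D_\alpha$. \emph{Surjectivity} of $\Psi_\cD$: lift the finitely many values $\varphi(T_i)$ of a given $\varphi:C\to D^\infty$ to a common $D_\alpha$, then push past the finitely many relations of $S$ to some $D_\beta\geq D_\alpha$. \emph{Injectivity}: finitely many equalities $\varphi_1(T_i)=\varphi_2(T_i)$ in $D^\infty$ lift simultaneously to some $D_\beta$.

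For the forward direction, let $\cD$ be the category of pairs $(E,\psi)$ with $E$ algebraically f.p.\ over $B$ and $\psi:E\to C$ a $B$-algebra morphism, with morphisms the commuting triangles. I will check that $\cD$ is filtered: for any two objects, the tensor product $E_1\otimes_BE_2$ is algebraically f.p.\ and receives compatible maps to $C$; for a parallel pair $f_1,f_2:E\to E'$ with $\psi'f_1=\psi'f_2$, the quotient $E''=\bpgenquot{E'}{\{f_1(T_j)\=f_2(T_j)\}_j}$ by the finitely many identifications of images of generators of $E$ is algebraically f.p., coequalizes $f_1,f_2$, and maps to $C$. I then verify $C\cong\colim_\cD E$ by exhibiting, for each $c\in C$, the object $B[T]$ with $T\mapsto c$, and for each relation $\sum c_k\=\sum d_l$ in $C$, the object $\bpgenquot{B[T_k,U_l]}{\sum T_k\=\sum U_l}$ with the obvious map. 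Applying the finite presentation hypothesis on $f:B\to C$ to the identity $\id_C:C\to C=\colim_\cD E$ factors it through some $(E_0,p)\in\cD$, yielding a section $s:C\to E_0$ of $p$ and exhibiting $C$ as a retract of the algebraically f.p.\ algebra $E_0$.

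The main obstacle is the final step: a retract of an algebraically f.p.\ blueprint is algebraically f.p. Write $E_0=\bpgenquot{B[T_1,\ldots,T_n]}{S_0}$ with $S_0$ finite, set $c_i=p(T_i)\in C$, and let $\mu_i\in B[T_1,\ldots,T_n]$ be a monomial representative of $s(c_i)\in E_0$. The plan is to establish the isomorphism
\[ C \ \cong \ \bpgenquot{B[T_1,\ldots,T_n]}{\, S_0\cup\{T_i\=\mu_i\}_{i=1}^n\,} \]
under $T_i\mapsto c_i$. The additional identifications $T_i\=\mu_i$ (together with axiom (A3)) encode the idempotent $sp$ and propagate the monoid-level retract structure from $E_0$ to $B[T_1,\ldots,T_n]$, while the pre-addition of $C$ is already generated over $B$ by $p(S_0)$ because every relation in $C$ lifts via $s$ to a relation in $E_0$ and descends through $p$. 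The inverse is the composition $C\xrightarrow{s}E_0\to\bpgenquot{B[T_1,\ldots,T_n]}{S_0\cup\{T_i\=\mu_i\}}$, and a direct check on generators shows both compositions are identities.
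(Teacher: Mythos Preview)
Your backward direction coincides with the paper's: both lift the finitely many generator images and the finitely many relations through the filtered colimit.

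Your forward direction is correct but follows a genuinely different route. The paper fixes a presentation $(\{b_i\}_{i\in I},S)$ of $C$ with $|I\cup S|$ minimal, forms the directed system of the $D_{J,T}=\bpgenquot{B[b_i]_{i\in J}}{T}$ for finite $J\subset I$ and $T\subset S$, observes that $\colim D_{J,T}=C$, and then uses the bijection $\Psi_\cD$ to factor $\id_C$ through some $D_{J,T}$; minimality forces $J=I$ and $T=S$. You instead take the full filtered category of all algebraically finitely presented $B$-algebras mapping to $C$, factor $\id_C$ through one of them, and then prove explicitly that a retract of an algebraically finitely presented blueprint is again algebraically finitely presented, via the presentation $\bpgenquot{B[T_1,\dots,T_n]}{S_0\cup\{T_i\=\mu_i\}}$.

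What each approach buys: the paper's argument is shorter because it never leaves the ambient presentation of $C$, but its final line (``by minimality \dots\ $J=I$ and $T=S$'') is terse---factoring $\id_C$ through $D_{J,T}$ only exhibits $C$ as a retract of $D_{J,T}$, and one still needs something like your retract lemma to extract a finite presentation of $C$ and invoke minimality. Your approach makes this retract step completely explicit and is the standard compact-object argument, so it transfers verbatim to other algebraic settings; the cost is the extra verification that the index category is filtered and that $C$ is the colimit, which you handle correctly (the coequalizer step works because every element of an algebraically finitely presented $E$ is a monomial in its generators, so identifying $f_1(T_j)$ with $f_2(T_j)$ suffices).
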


\begin{proof}
 We unfold the definition of a finitely presented morphism of blueprints. Consider a directed system $\cD$ of $B$-algebras $D_i$ (where $i$ ranges through an index set $I$) and morphisms $g_{i,j}:D_i\to D_j$ (for a directed subset of indices $(i,j)\in I\times I$). Then the colimit of $\cD$ can be represented by the $B$-algebra
 \[
  \colim \cD \quad = \quad \coprod_{i\in I}\ D_i / \sim
 \]
 where the equivalence relation is generated by the relations $a_i\sim b_j$ between $a_i\in D_i$ and $b_j\in D_j$ for which there is a $k\in I$ such that $i,j\leq k$ and $g_{i,k}(a_i)=g_{j,k}(b_j)$. The pre-addition of $\colim \cD$ is generated by all relations of the form $\sum \bar a_k\=\sum \bar b_l$ for which $\sum a_k\=\sum b_l$ is a relation in some $D_i$ and where $\bar a_k$ and $\bar b_l$ are he images of $a_k$ and $b_l$, respectively, in $\colim \cD$. Let $\iota_i:D_i\to \colim \cD$ be the canonical morphisms.

 Similarly, an element of $\colim\Hom_B(C,\cD)$ can be represented by a morphism $\varphi_i:B\to D_i$ for some $i\in I$. The canonical map 
 \[
  \Psi_\cD: \quad \colim \ \Hom_B (C,\cD) \quad \longrightarrow \quad \Hom_B(C,\colim \cD)
 \]
 send the class of $\varphi_i:B\to D_i$ the morphism $\varphi=\iota_i\circ\varphi_i:C\to \colim  \cD$. Recall that $f:B\to C$ is of finite presentation if $\Psi_\cD$ is a bijection for all directed systems $\cD$.

 Assume $(\{b_1,\dotsc,b_n\},S)$ is a finite presentation for $f:B\to C$. Let $\cD$ be a directed system. 

 We show that $\Psi_\cD$ is injective. Let $\varphi_{i}$ and $\psi_j$ represent two elements of $\colim\Hom_B(C,\cD)$ such that $\varphi=\Psi_\cD(\varphi_i)=\Psi_\cD(\psi_j)=\psi$. Denote $\varphi_k=\varphi_i\circ g_{i,k}$ for $i\leq k$ and $\psi_k=\psi_j\circ g_{j,k}$ for $j\leq k$. Then there is an $k_l\in I$ and an $b_{k_l}\in D_{k_l}$ for every $l=1,\dotsc,n$ such that $\varphi_{k_l}(b_{k_l})=\psi_{k_l}(b_{k_l})$ in $D_{k_l}$. If $k\in J(k_1)\cap\dotsb\cap J(k_n)$, then $\varphi_{k}(b_k)=\psi_{k}(b_k)$ in $D_{k}$ for all $l=1,\dotsc,n$. Therefore, we obtain for an arbitrary element $c\=\sum f(a_l) b_{k_l}$ in $C$, the relation
 \[
  \varphi_k(c) \ \= \ \sum f(a_l) \varphi_k(b_{k_l}) \ \= \ \sum f(a_l) \psi_k(b_{k_l}) \ \= \ \psi_k(c),
 \]
 i.e.\ $\varphi_k=\psi_k$. This shows the injectivity of $\Psi_\cD$.

 We show that $\Psi_\cD$ is surjective. Let $\varphi:C\to \colim\cD$ be a morphism of $B$-algebras. Then for every relation $R$ in $\tilde f(S)$, there is an $i_R\in I$ and $c_l\in D_{i_R}$ such that $\varphi(b_l)=\iota_{i_R}(c_l)$ for $l=1,\dotsc,n$ and such that the $c_l$ satisfy the relation $\varphi_{i_R}(R)$. Since $S$ is finite, we can replace the $i_R$ by an $i\in I$ that is larger than all $i_R$ and can assume that there are elements $c_l$ in $D_i$ that satisfy all relations in $\varphi_i(\tilde f(S))$ and such that $\iota_i(c_l)=b_l$. This means that $\varphi: C\to \colim\cD$ factors into a morphism $\varphi_i:C\to D_i$, defined by $\varphi_i(b_l)=c_l$, followed by $\iota_i:D_i\to \colim\cD$. This establishes the surjectivity of $\Psi_\cD$ and shows that $f:B\to C$ is of finite presentation, which is one direction of the proposition.

 Assume that $\Psi_\cD$ is a bijection for every directed system $\cD$. Let $(\{b_i\}_{i\in I},S)$ be a presentation of $B\to C$ such that the cardinality of $I\cup S$ is minimal. We show that both $I$ and $S$ are finite.
 
 Define for every pair of finite subsets $J\subset I$ and $T\subset S$ such that all relations in $T$ involve only elements of $J$ the blueprint $D_{J,T}=\bpgenquot{B[b_i]_{i\in J}}{\cR_T}$ where $\cR_T$ is the pre-addition that is generated by $T$ and $\cR_B$. Then every $D_{J,T}$ is naturally a $B$-algebra and a pair of inclusions $J_1\subset J_2$ and $T_1\subset T_2$ yields a morphism $D_{J_1,T_1}\to D_{J_2,T_2}$ of $B$-algebras. This defines a directed system $\cD$ whose colimit $\colim\cD$ is $C$. Since $\Psi_\cD$ is bijective, the identity morphism $\id:C\to C=\colim\cD$ comes from an element of $\colim\Hom_B(C,\cD)$, represented by some morphism $\varphi_{J,T}:C\to D_{J,T}$. This means that there are a finite subset $J$ of $I$ and a finite subset $T$ of $S$ such that $\id:C\to C$ factorizes into $\varphi_{J,T}$, followed by $\iota_{J,T}:D_{J,T}\to C$. By the minimality of $I$ and $S$, this can only be the case if both $J=I$ and $T=S$, i.e.\ $I$ and $S$ are finite. This finishes the proof of the 
proposition.
\end{proof}

%%%%%%%%%%%%%%%%%%%%%%%%%%%%%%%%%%%%%%%%%%%%%%%%%%%%%%%%%%%%%%%%%%%%%%%%%%%%%%%%%%%%%%%%%%%%%%%%%%%%%%%%%%%%%%%%%%%%%%%%%%%%%%%%%%%%%%%%%%%%%%%%%%

\section{Flat morphisms}
\label{section: flat morphisms}

In this section, we show that a flat morphisms of blueprints coincide with localizations. 

We recall the definition of a localization of a blueprint at a multiplicative subset. Let $B=\bpquot A\cR$ be a blueprint. Let $S$ be a \emph{multiplicative set} in $B$, i.e.\ a subset of $B$ that contains $1$ and $ab$ for all $a,b\in S$. We define $S^{-1}A$ as the quotient of $A\times S$ by the equivalence relation $\sim$ given by $(a,s)\sim(a',s')$ if and only if there is a $t \in S$ such that $tsa'=ts'a$. We write $\frac as$ for the equivalence class of $(a,s)$ in $S^{-1}A$. We define $S^{-1}\cR$ as the set 
\[
 S^{-1}\cR \quad = \quad \Bigr\{\ \sum\frac{a_i}{s_i}\=\sum\frac{b_j}{r_j}\ \Bigl| \ \exists\, t\in S\text{ such that }\sum ts^ia_i\=\sum tr^jb_j\ \Bigl\} 
\]
where
\[
 \quad s^i=\prod_{k\neq i}s_k\cdot\prod_j r_j\quad \text{ and }\quad r^j=\prod_is_i\cdot\prod_{l\neq j} r_l.
\]
Then $S^{-1}A$ is a monoid (with the multiplication inherited from $A\times S$) and that $S^{-1}\cR$ is a pre-addition for $S^{-1}A$. We define the \emph{localization of $B$ at $S$} as the blueprint $S^{-1}B=\bpquot{S^{-1}A}{S^{-1}\cR}$.

 The association $a\mapsto \frac a1$ defines an epimorphism $B\to S^{-1}B$. It satisfies the universal property that every morphism $f:B\to C$ that maps $S$ to the units of $C$ factors uniquely through $B\to S^{-1}B$. If $S=\{h^i\}_{i\geq0}$ is generated by some $h\in B$, then we denote $S^{-1}B$ by $B[h^{-1}]$.

 Given a blue $B$-module $M$ and a multiplicative subset $S$ of $B$, we define $S^{-1}M$ as the following blue $B$-module. Its underlying set is the quotient of $M\times S$ by the equivalence relation $\sim$ defined by $(m,s)\sim(m',s')$ if and only if there is a $t\in S$ such that $ts.m'=ts'.m$. We denote by $\frac ms$ the equivalence class of $(m,s)$ and denote by $f:M\to S^{-1}M$ the canonical map that sends $m$ to $\frac m1$. The pre-addition of $S^{-1}M$ is generated by $f(\cP)$ where $\cP$ is the pre-addition of $M$. With this $f:M\to S^{-1}M$ is a morphism of blue $B$-modules, and $S^{-1}M$ is naturally a blue $S^{-1}B$-module.

 We say that a morphism $f:B\to C$ of blueprints is a \emph{localization} if there is a multiplicative set $S$ in $B$ and an isomorphism $g: C\to S^{-1}B$ such that $g\circ f:B\to S^{-1}B$ equals the localization map $a\mapsto \frac a1$. Recall that a morphism $f:B\to C$ is flat if $\blanc\otimes_BC:\Mod B\to \Mod C$ commutes with finite limits and colimits.

\begin{prop}\label{prop: flat equals localization}
 A morphism $f:B\to C$ of blueprints is flat if and only if it is a localization.
\end{prop}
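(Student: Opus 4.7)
The plan divides into two parts: showing every localization is flat, and the harder converse that every flat morphism is a localization.

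For the localization-implies-flat direction, I would first identify $M \otimes_B S^{-1}B$ with $S^{-1}M$ for any blue $B$-module $M$, using the universal property of the tensor product from Lemma~\ref{lemma: tensor products} together with the universal property of the localization map $B \to S^{-1}B$. Colimits are preserved by $\blanc \otimes_B S^{-1}B$ automatically, since this functor is a left adjoint by Lemma~\ref{lemma: tensor products}. Preservation of finite limits then follows by realizing $S^{-1}M$ as a filtered colimit of copies of $M$ indexed by $S$ ordered by divisibility (the transition map for $s' = ts$ being $m \mapsto t.m$), and appealing to the standard fact that filtered colimits commute with finite limits in pointed sets. Since the pre-addition of $S^{-1}M$ is by construction generated by the images of the pre-addition of $M$, this conclusion carries over from pointed sets to blue modules.

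For the converse, let $f:B\to C$ be flat and set $S = f^{-1}(C^\times)$. By the universal property of localization, the morphism $f$ factors uniquely as $B \to S^{-1}B \xrightarrow{\bar f} C$, and the claim reduces to showing that $\bar f$ is an isomorphism of blueprints. This splits into three assertions: injectivity of $\bar f$ on underlying monoids, its surjectivity on underlying monoids, and the statement that the pre-addition of $C$ is generated by the image of the pre-addition of $S^{-1}B$. Each is proved by applying the hypothesis that $\blanc \otimes_B C$ preserves a suitably chosen finite limit of blue $B$-modules: injectivity arises from the equalizer of two parallel maps $B \rightrightarrows B$ witnessing $f(a) = f(a')$, producing an element $t \in S$ with $ta = ta'$; surjectivity arises from applying flatness to a fibre-product diagram tailored to a given $c \in C$, forcing the existence of $a, s \in B$ with $f(s) \in C^\times$ and $f(a) = c\, f(s)$; and the pre-addition claim is obtained analogously, applying flatness to the equalizer diagram that witnesses each relation of the pre-addition of $C$.

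The main obstacle is the surjectivity step in the converse direction: the crux is to construct the correct ``test'' blue $B$-module whose tensor product with $C$ encodes a given element $c \in C$, and whose associated finite limit, when preserved under $\blanc \otimes_B C$, forces $c$ to lie in the image of $\bar f$. This is in sharp contrast to the ring-theoretic setting, where flat morphisms are far from being localizations in general; the rigidity of blue modules under finite limits is precisely what makes the stronger conclusion possible in the present context. Once surjectivity is secured, injectivity and the pre-addition claim follow from analogous but simpler applications of the same principle, completing the verification that $\bar f$ is an isomorphism of blueprints.
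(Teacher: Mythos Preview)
Your overall strategy matches the paper's: factor $f$ through $S^{-1}B$ with $S=f^{-1}(C^\times)$ and prove $\bar f$ is an isomorphism by exploiting preservation of specific finite limits. But the two harder steps remain at the level of intent, and in each case the missing construction is exactly what makes the argument go through. For surjectivity, you flag the step as the crux yet do not name the test object; the paper takes simply the product $B\times B$. Flatness forces $(B\times B)\otimes_B C\to C\times C$ to be an isomorphism, and because every element of the left side is a pure tensor $(b_1,b_2)\otimes c$ (Lemma~\ref{lemma: tensor products}), the preimage of $(d,1)$ yields $f(b_2)c=1$, hence $b_2\in S$ and $d=\bar f(b_1/b_2)$. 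Your phrase ``a fibre-product diagram tailored to a given $c\in C$'' is misleading: the diagram must live in $\Mod B$ and cannot depend on $c$; the element $c$ only enters when you chase preimages in $C\times C$.

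For the pre-addition step, ``the equalizer diagram that witnesses each relation'' has no evident meaning in $\Mod B$: given $\sum a_i\equiv\sum b_j$ in $C$, there is no map $B\to B$ of multiplication by $\sum a_i$, since this sum is not an element of $B$. The paper's device is to pass to the associated semiring $S^{-1}B^+$, regarded as a blue $B$-module, where $a=\sum a_i$ and $b=\sum b_j$ become honest elements; one then takes the equalizer of the multiplication maps $f_a,f_b:S^{-1}B^+\to S^{-1}B^+$, and flatness produces an invertible element of this equalizer, forcing $a=b$ already in $S^{-1}B^+$. Without this passage to $B^+$, your equalizer is undefined. Once these two constructions are supplied, your outline becomes the paper's proof; the injectivity-on-monoids step is then subsumed by the pre-addition argument rather than treated separately.
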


\begin{proof}
 Since it is not surprising that localizations are flat, we restrict ourselves to an outline of this direction of the proof. Let $S$ be a multiplicative subset of $B$. Since $\blanc\otimes_BS^{-1}B$ is left adjoint to $\Hom_B(S^{-1}B,\blanc)$, it commutes with colimits. It is easily verified that $\blanc\otimes_BS^{-1}B$ commutes with finite limits (cf.\ \cite[Prop.\ 2.24]{CLS12} for the case of a monoid $B$). Therefore $B\to S^{-1}B$ is flat.

 For two blue $B$-modules $M$ and $N$, the universal morphism $M\times N\to M\otimes_BN$ is a surjection between the underlying sets, see Lemma \ref{lemma: tensor products}. This means that every element of $M\otimes_BN$ can be represented as $m\otimes n$ with $m\in M$ and $n\in N$. This is the basic property that allows us to show that every flat morphism is a localization.

 Assume that $f:B\to C$ is flat. Define $S=f^{-1}(C^\times)$, which is a multiplicative subset of $B$. By the universal property of the localization, the morphism $f:B\to C$ factors into $B\to S^{-1}B$ and a unique morphism $f_S:S^{-1}B \to C$. The proof is completed once we have shown that $f_S$ is an isomorphism.

 We show that $f_S$ is surjective. For $b\in B$ and $c\in C$, we write $b.c=f(b)c$. By the flatness of $f:B\to C$, the morphism 
 \[
  \begin{array}{cccccc}
   \Phi:& (B\times B)\otimes_BC & \longrightarrow & (B\otimes_BC)\times (B\otimes_BC) & = & C\times C \\
        &  (b_1,b_2)\otimes c   & \longmapsto     &  (b_1\otimes c,b_2\otimes c)      & = & (b_1.c,b_2.c)
  \end{array}
 \]
 is an isomorphism. This means that we find for every $d\in C$ elements $b_1,b_2\in B$ and $c\in C$ such that $(d,1)=(b_1.c,b_2.c)$. Therefore $b_2.d=b_1b_2.c=b_1.1$ is in $f(B)$ and $f(b_2)\in C^\times$. This means that $d=f_S(\frac{b_1}{b_2})$ is in the image of $f_S$, which shows that $f_S$ is surjective.

 If $A$ is the underlying monoid of $S^{-1}B$ and $\cR_B$ is its pre-addition, then we can represent $C$ as $\bpquot A{\cR_C}$, which is not necessarily a proper representation. If we show that $f_S$ induces a bijection between $\cR_B$ and $\cR_C$, then it follows that $f_S$ is an isomorphism. 

 To do so, consider a relation $\sum a_i\=\sum b_j$ between elements $a_i,b_j\in A$ in $\cR_C$. Consider the inclusion $B\to B^+$ of $B$ into its associated semiring. Define $a=\sum a_i$ and $b=\sum b_j$ in $S^{-1}B^+$ and consider the two morphisms
 \[
  \xymatrix{S^{-1}B^+  \ar@<0,5ex>[rr]^{f_a} \ar@<-0,5ex>[rr]_{f_b} && S^{-1}B^+} 
 \]
 of blue $B$-modules given by $f_a(c)=ac$ and $f_b(c)=bc$. Since $B\to S^{-1}B$ is an epimorphism and $S^{-1}B\to C$ is surjective, we have $S^{-1}B^+\otimes_BC=S^{-1}B^+\otimes_{S^{-1}B}C=C^+$. Therefore the base change $\blanc\otimes_BC$ yields the morphisms 
 \[
  \xymatrix{C^+  \ar@<0,5ex>[rr]^{f_a\otimes_BC} \ar@<-0,5ex>[rr]_{f_b\otimes_BC} && C^+},
 \]
 which are the same since $a=\sum a_i =\sum b_j=b$ in $C^+$. Since $f:B\to C$ is flat, we have 
 \[
  \eq\bigl(f_a,f_b\bigr)\otimes_BC \quad = \quad \eq\bigl(f_a\otimes_BC,f_b\otimes_BC\bigr) \quad = \quad C^+.
 \]
 Therefore, there exists a $c\in \eq(f_a,f_b)$ such that $f(c)\in (C^+)^\times$ and $c\otimes f(c)^{-1}=1$ in $C^+$. By the definition of $S$, $c$ is invertible in $S^{-1}B^+$. Therefore the equality $ac=f_{a}(c)=f_{b}(c)=bc$ implies
 \[
  a \quad = \quad acc^{-1}\quad = \quad c^{-1}f_a(c)\quad = \quad c^{-1}f_b(c)\quad = \quad bcc^{-1} \quad = \quad b
 \]
  in $S^{-1}B^+$. This means that $\sum a_i\=\sum b_j$ in $S^{-1}B$, which was to be shown.
\end{proof}

\begin{cor}
 Every flat morphism of blueprints is an epimorphism. \qed
\end{cor}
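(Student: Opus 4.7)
The plan is to combine Proposition \ref{prop: flat equals localization} with the standard observation that localization morphisms are epimorphisms, purely via the universal property of localization.

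First, by Proposition \ref{prop: flat equals localization}, any flat morphism $f:B\to C$ is a localization, meaning there is a multiplicative subset $S\subset B$ and an isomorphism $g:C\to S^{-1}B$ such that $g\circ f$ is the canonical map $\iota:B\to S^{-1}B$, $a\mapsto\frac{a}{1}$. Since isomorphisms are epimorphisms and the composition $g\circ f$ is an epimorphism if and only if $f$ is (as $g$ is an iso), it suffices to show that $\iota$ is an epimorphism in $\bp$.

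Next, I would verify this by hand using the recalled universal property of $\iota$: any morphism $\varphi:B\to D$ sending $S$ into $D^\times$ factors uniquely through $\iota$. Suppose $h_1,h_2:S^{-1}B\to D$ satisfy $h_1\circ\iota=h_2\circ\iota=:\varphi$. Since $\iota$ sends every $s\in S$ to a unit of $S^{-1}B$ (with inverse $\frac{1}{s}$), both $h_1$ and $h_2$ must send $S$ to units of $D$, so the factorization $\varphi$ satisfies the condition to extend uniquely through $\iota$. By uniqueness of the factorization, $h_1=h_2$. Thus $\iota$ is an epimorphism.

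Alternatively, and more concretely, for any $\frac{a}{s}\in S^{-1}B$ one has $\frac{a}{s}\cdot\frac{s}{1}=\frac{a}{1}$, so $h_i(\frac{a}{s})\cdot\varphi(s)=\varphi(a)$ for $i=1,2$. Since $\varphi(s)$ is a unit in $D$ (its inverse being determined by the common value $h_1(\frac{1}{s})=h_2(\frac{1}{s})$, which itself follows from $\varphi(s)\cdot h_i(\frac{1}{s})=\varphi(1)=1$), one concludes $h_1(\frac{a}{s})=h_2(\frac{a}{s})$.

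There is essentially no obstacle here: the argument is the universal-property proof familiar from commutative ring theory, and it transports verbatim to blueprints because the definition of $S^{-1}B$ recalled at the start of Section \ref{section: flat morphisms} gives precisely the same universal property. The only thing to be a bit careful about is that ``$h_i$ sends $\frac{1}{s}$ to a unit of $D$'' uses only the multiplicative structure, so it does not interact with the pre-addition and requires no extra verification.
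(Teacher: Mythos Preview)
Your argument is correct and matches the paper's approach: the corollary is marked \qed\ because the paper has already recorded (just after defining $S^{-1}B$) that the canonical map $B\to S^{-1}B$ is an epimorphism, so the statement is immediate from Proposition \ref{prop: flat equals localization}. Your additional verification of this fact via the universal property is fine but unnecessary given what the paper has already stated.
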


We say that a morphism $f:B\to C$ is a \emph{finite localization} if it is isomorphic to a localization $B\to S^{-1}B$ at a finitely generated multiplicative subset $S=\{h_1^{e_1}\dotsb h_n^{e_n}|e_i\geq 0\}$ of $B$. Note that in this case, $S^{-1}B=B[h^{-1}]$ for $h=\prod h_i$.

\begin{cor}\label{cor:flat epi ofp euqals finite localization}
 A morphism of blueprints is a flat epimorphism of finite presentation if and only if it is a finite localization.
\end{cor}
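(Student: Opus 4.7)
The plan is to combine Propositions~\ref{prop: flat equals localization} and~\ref{prop: blueprint morphisms od finite presentation} with a directed colimit argument, obtaining both directions from properties of localizations already at hand.

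For the direction ``finite localization $\Rightarrow$ flat epimorphism of finite presentation'', a finite localization $B\to B[h^{-1}]$ is flat by Proposition~\ref{prop: flat equals localization} and hence an epimorphism by the preceding corollary. I exhibit a finite algebraic presentation $B[h^{-1}]=\bpgenquot{B[T]}{\{Th\equiv 1\}}$: the single relation $Th\equiv 1$ forces $T\cdot h=1$ by Axiom \Ta{3}, so the right-hand side satisfies the universal property of $B[h^{-1}]$ as the initial $B$-algebra in which $h$ is invertible. Proposition~\ref{prop: blueprint morphisms od finite presentation} then yields finite presentation.

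For the converse, I assume $f:B\to C$ is a flat epimorphism of finite presentation and use Proposition~\ref{prop: flat equals localization} to identify $C=S^{-1}B$ for some multiplicative subset $S\subseteq B$. The key step is to express $S^{-1}B$ as the directed colimit $\colim_\Lambda S'^{-1}B$, where $\Lambda$ is the directed poset of finitely generated multiplicative subsets $S'\subseteq S$. This identification follows from comparing universal properties: a morphism $B\to D$ inverts $S$ iff it inverts every $S'\in\Lambda$, since each $s\in S$ lies in $\{s^n\}_{n\geq 0}\in\Lambda$. Applying finite presentation of $f$ to the directed system $\cD=(S'^{-1}B)_{S'\in\Lambda}$ produces, for the class of $\id_{S^{-1}B}\in\Hom_B(S^{-1}B,\colim\cD)$, a preimage in $\colim\Hom_B(S^{-1}B,S'^{-1}B)$. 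Unpacking this yields a $B$-algebra morphism $g:S^{-1}B\to S'^{-1}B$ for some $S'\in\Lambda$ satisfying $\iota\circ g=\id_{S^{-1}B}$, where $\iota:S'^{-1}B\to S^{-1}B$ denotes the canonical map. Using that $B\to S'^{-1}B$ is an epimorphism (Proposition~\ref{prop: flat equals localization} and the corollary thereafter), the two $B$-algebra endomorphisms $g\circ\iota$ and $\id_{S'^{-1}B}$ of $S'^{-1}B$ agree after precomposition with the structure map $B\to S'^{-1}B$, and so must coincide. Hence $\iota$ is an isomorphism, and $C\cong S'^{-1}B=B[h^{-1}]$, where $h$ is the product of the generators of $S'$.

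The main obstacle I anticipate is justifying the colimit identification $\colim_\Lambda S'^{-1}B\cong S^{-1}B$ in $\bp$ carefully, since colimits of blueprints combine a monoid colimit with a generated pre-addition. Both the surjectivity of the natural map (every fraction $a/s$ with $s\in S$ arises in some $S'^{-1}B$) and the absence of extra pre-addition relations should be verified; they follow from the observation that the equivalence relation defining $S^{-1}B$ only involves finitely many elements of $S$ at a time, so it is the filtered union of the relations defining the $S'^{-1}B$ with $S'\in\Lambda$.
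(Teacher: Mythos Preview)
Your proof is correct. The forward direction matches the paper's argument (an explicit finite presentation of $B[h^{-1}]$). For the converse, the paper takes a more direct route: having already established in Proposition~\ref{prop: blueprint morphisms od finite presentation} that finite presentation is equivalent to the existence of a finite algebraic presentation $(\{T_1,\dots,T_n\},R)$, it simply observes that the invertible $T_i$'s generate a finitely generated multiplicative set realizing the localization. Your argument instead bypasses Proposition~\ref{prop: blueprint morphisms od finite presentation} entirely and applies the categorical definition of finite presentation to the filtered system $(S'^{-1}B)_{S'}$, extracting a section $g$ and then using the epimorphism property of localizations to upgrade the one-sided inverse to a two-sided one. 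Your approach is more self-contained and transports to any setting where ``flat $\Rightarrow$ localization'' holds, while the paper's is shorter because it cashes in the work already done in Proposition~\ref{prop: blueprint morphisms od finite presentation}. One small remark: your reference ``Axiom~\Ta{3}'' does not match the paper's labelling of the blueprint axioms, which are numbered (i)--(iii); you mean axiom~(iii).
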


\begin{proof}
 We know that a flat epimorphism is the same as a localization. Let $B\to S^{-1}B$ be a finite localization, i.e. $S$ is generated by finitely many elements $h_1,\dotsc,h_n$. Then $(\{T_1,\dotsc,T_n\},\{T_1h_1\=1,\dotsc,T_nh_n\=1\})$ is a finite presentation for $B\to S^{-1}B$. Conversely, if $B\to S^{-1}B$ is a localization with a finite presentation $(\{T_1\dotsc,T_n\},R)$, then $S$ is finitely generated by those $T_i$ that are invertible in $S^{-1}B$.
\end{proof}

%%%%%%%%%%%%%%%%%%%%%%%%%%%%%%%%%%%%%%%%%%%%%%%%%%%%%%%%%%%%%%%%%%%%%%%%%%%%%%%%%%%%%%%%%%%%%%%%%%%%%%%%%%%%%%%%%%%%%%%%%%%%%%%%%%%%%%%%%%%%%%%%%%

\section{Geometric blue schemes}
\label{section: geometric blue schemes}

In this section, we recall the definition of a blue scheme as a blueprinted space that is locally isomorphic to the spectra of blueprints, as introduced in \cite{blueprints1}. In order to contrast blue schemes with relative blue schemes, as considered in section \ref{section: relative schemes}, we call blue schemes also geometric blue schemes in the following. In order to make the terminology more coherent with the other literature on semirings where an ideal is a submodule of the semiring (cf.\ section \ref{section: semiring schemes}), we will use the term $k$-ideal for what is called an ideal in \cite{blueprints1}.

A \emph{$k$-ideal} of a blueprint $B$ is a subset $I$ of $B$ such that $0\in I$, $IB=I$ and $c\in I$ whenever $c+\sum a_i\=\sum b_j$ with $a_i,b_j\in I$. For every $k$-ideal $I$ of $B$, there is a universal morphism $f:B\to B/I$ of blueprints with $f^{-1}(0)=I$, and we call $B/I$ the \emph{quotient of $B$ by $I$}. A $k$-ideal $I$ is \emph{maximal} if $I\subsetneq B$ and if there is there is no strictly larger $k$-ideal $J\subsetneq B$. 

A \emph{blueprinted space} is a topological space $X$ together with a sheaf $\cO_X$ with values in $\bp$. A \emph{morphism of blueprinted spaces $X$ and $Y$} consists of a continuous map $\varphi:X\to Y$ between the underlying topological spaces and a morphism $\varphi^\#:\cO_Y\to \varphi_\ast\cO_X$ of sheaves such that the induced morphisms $\varphi^\#_x:\cO_{Y,\varphi(x)}\to \cO_{X,x}$ of stalks are \emph{local}, i.e.\ it maps noninvertible elements to noninvertible elements. 

\begin{rem}
 Note that we consider two different ways of associating a blueprinted space with a blueprint, which are the \emph{geometric spectrum}, as defined below, and the \emph{subcanonical spectrum}, as defined in \ref{section: the blueprinted space of a subcanonical blue scheme}. For either spectrum, the blueprinted space will be local \emph{in the appropriate sense}, but these locality conditions do not agree. Therefore we avoid the definition of a ``locally blueprinted space'' and circumvent this by a more general definition of local morphisms between the stalks that applies to both notions of spectra in the correct way.
\end{rem}

A \emph{prime $k$-ideal} of a blueprint $B$ is a $k$-ideal $\fp$ such that $S=B-\fp$ is a multiplicative subset of $B$. We endow the set $X$ of all prime $k$-ideals of $B$ with the topology that is generated by the subsets $U_h=\{\fp\in X|h\notin\fp\}$ where $h\in B$. Note that $U_g\cap U_h=U_{gh}$, which implies that $\{U_h|h\in B\}$ forms a basis for the topology of $X$. A \emph{covering family for $X$} is a collection of open subsets whose union equals $X$. The \emph{structure sheaf $\cO_X$} is defined as the sheaf that associates with each open subset $U$ of $X$ the set of locally representable sections $s:U\to \coprod_{\fp\in U}B_\fp$, i.e.\ there is a covering family $\{U_{h_i}\}$ of $X$ and elements $s_i\in B[h_i^{-1}]$ such that $s(\fp)=s_i$ in $B_\fp$ for all $i$ and $\fp\in U_{h_i}$. Note that each set $\cO_X(U)$ of local sections comes with the natural structure of a blueprint. The \emph{spectrum $\Spec B$ of $B$} is the topological space $X$ together with the structure sheaf $\cO_X$. 

The spectrum of a blueprint is a blueprinted space. The stalk $\cO_{X,x}$ at a point $x$ of $X$ is a \emph{local blueprint}, i.e.\ $\fm_x=\cO_{X,x}-\cO_{X,x}^\times$ is a maximal $k$-ideal. The \emph{residue field at $x$} is the quotient $\cO_{X,x}/\fm_x$, which is a \emph{blue field}, i.e.\ a blueprint with $0\neq 1$ whose only noninvertible element is $0$.

A morphism $f:B\to C$ of blueprints defines naturally a morphism $f^\ast:\Spec C\to \Spec B$ between the spectra of the blueprints. Thus $\Spec$ defines a functor from $\bp$ to the category $\bpspaces$ of blueprinted spaces. Conversely, taking global sections $\Gamma(X,\cO_X)$ defines a functor from $\bpspaces$ to $\bp$. We obtain an endofunctor on blueprints that sends $B$ to $\Gamma B=\Gamma(X,\cO_X)$ where $X=\Spec B$. 

The difficulty in comparing blue schemes with schemes relative to the category of blue $\Fun$-modules lies in the fact that the functor $\Spec:\bp\to\bpspaces$ is not fully faithful, and that the canonical morphism $\sigma:B\to\Gamma B$, called the \emph{globalization of $B$}, is in general not an isomorphism. We call $B$ \emph{global} if $\sigma$ is an isomorphism. We have the following results; see \cite[Thm.\ 3.12 and Cor.\ 3.13]{blueprints1}.

\begin{thm}\label{thm: globalization induces an isomorphism between spectra}
 For every blueprint $B$, $\sigma:B\to \Gamma B$ defines an isomorphism $\sigma^\ast:\Spec\Gamma B\to\Spec B$. Consequently, $\Gamma B$ is a global blueprint and every morphism $f:B\to C$ into a global blueprint $C=\Gamma C$ factors uniquely through $\sigma:B\to \Gamma B$.
\end{thm}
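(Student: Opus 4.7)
The plan is to unwind the construction of $\Spec B$ and $\Gamma B$ on the distinguished basis of principal opens, show that $\sigma$ induces a bijection on primes and a stalkwise isomorphism, and then derive globality and the universal property formally.

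\medskip

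\noindent\textbf{Step 1 (stalks).} I would first establish that for every prime $\fp$ of $B$ the stalk $\cO_{X,\fp}$ of $X = \Spec B$ is the localization $B_\fp$. This is the key local computation: the basic opens $U_h$ with $h\notin\fp$ are cofinal among opens containing $\fp$, each carries the presheaf value $B[h^{-1}]$, and the colimit over this directed system is precisely $B_\fp$; by Proposition \ref{prop: flat equals localization} localizations are flat epimorphisms, which makes stalk-taking well behaved. The canonical map $\sigma: B\to \Gamma B$ factors, at each point, through the identification $\cO_{X,\fp}=B_\fp$, so the composition $B\to \Gamma B\to B_\fp$ is the localization map at $\fp$.

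\medskip

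\noindent\textbf{Step 2 ($\sigma^\ast$ is a homeomorphism).} Using the stalk description, I would define the inverse of $\sigma^\ast$ on points. Given a prime $\fq$ of $\Gamma B$, set $\fp=\sigma^{-1}(\fq)$; the axioms of a prime ideal pass through $\sigma$ since $\sigma$ is a morphism of blueprints, so $\fp$ is prime. Conversely, given a prime $\fp$ of $B$, define $\fq=\{s\in\Gamma B\mid s(\fp)\in \fp B_\fp\}$; using the local representability of sections and Step 1, one checks $\fq$ is prime with $\sigma^{-1}(\fq)=\fp$. The two assignments are mutually inverse. On topology, the principal opens generate, and $\sigma^{\ast,-1}(U_h)=U_{\sigma(h)}$ by the stalkwise description, giving a homeomorphism.

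\medskip

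\noindent\textbf{Step 3 (isomorphism of structure sheaves).} Since locally representable sections are uniquely determined by their stalks (this is the sheaf property built into the definition of $\cO_X$) and since by Step 1 the stalk maps $\cO_{\Spec B,\fp}\to \cO_{\Spec\Gamma B,\fq}$ induced by $\sigma^\ast$ are identities $B_\fp\to B_\fp$, the morphism $\sigma^{\ast\#}:\cO_{\Spec B}\to \sigma^\ast_\ast\cO_{\Spec\Gamma B}$ is an isomorphism on stalks, hence an isomorphism of sheaves.

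\medskip

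\noindent\textbf{Step 4 (globality and universal property).} Applying Steps 1--3 to $\Gamma B$ in place of $B$ yields $\Gamma(\Gamma B)=\Gamma(\Spec\Gamma B)=\Gamma(\Spec B)=\Gamma B$, so $\sigma_{\Gamma B}$ is an isomorphism and $\Gamma B$ is global. For the universal property, given a morphism $f:B\to C$ with $C=\Gamma C$, form $f^\ast:\Spec C\to \Spec B$ in $\bpspaces$ and take global sections to obtain $\Gamma f^\ast : \Gamma B\to \Gamma C= C$. The relation $\Gamma f^\ast\circ \sigma_B=f$ holds because $\sigma$ is the unit of the adjunction $\Gamma\dashv\Spec$ between $\bpspaces$ and $\bp^\op$; uniqueness of the factorization follows from the fact that $\sigma:B\to\Gamma B$ is an epimorphism, itself a consequence of Step 1 (it is stalkwise an isomorphism, hence a flat epimorphism by Proposition \ref{prop: flat equals localization}).

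\medskip

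\noindent\textbf{Main obstacle.} The delicate point is Step 1, specifically verifying that the stalks of $\cO_X$ are genuinely the localizations $B_\fp$ rather than merely quotients of them. In the blueprint setting the pre-addition of $B[h^{-1}]$ is generated by the localized pre-addition of $B$ but need not be equal to it on the nose, and sheafification can in principle enlarge it further. Controlling this in passing from $B$ to $\Gamma B$, and confirming that the adjunction unit $\sigma$ captures exactly this enlargement (and nothing more), is where the bulk of the bookkeeping lies; the remaining topological and categorical assertions are then formal.
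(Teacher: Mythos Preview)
The paper does not prove this theorem; it merely summarizes results from \cite{blueprints1}, citing in particular \cite[Thm.~3.12]{blueprints1}. Your outline is essentially the standard argument one would give (and is in the spirit of what is done in \cite{blueprints1}): compute stalks as localizations, deduce a bijection on primes and a homeomorphism, check the sheaf map is a stalkwise isomorphism, and read off globality and the factorization property.

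There is, however, a concrete error in Step~4. You invoke Proposition~\ref{prop: flat equals localization} to conclude that $\sigma:B\to\Gamma B$ is a flat epimorphism from the fact that $\sigma^\ast$ is stalkwise an isomorphism. That proposition says flat $\Leftrightarrow$ localization; it does not say that inducing an isomorphism on spectra implies flatness. In fact $\sigma$ is typically \emph{not} a localization: the very example recalled in Example~\ref{ex: algebraically presented blue schemes} (the blueprint $B=\bpgenquot{\Fun[a,b,g,h]}{ah\equiv bg,\ g+h\equiv 1}$) has a global section $s\in\Gamma B$ that is not of the form $b/t$ for any $b\in B$ and $t$ in a multiplicative set. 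So by Proposition~\ref{prop: flat equals localization}, $\sigma$ is not flat in general, and your argument for the epimorphism property collapses. (The same proposition is also a red herring in Step~1: the stalk computation is a direct colimit and needs no flatness input.)

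Uniqueness should instead be obtained from what you have already established. Given $g_1,g_2:\Gamma B\to C$ with $g_i\circ\sigma_B=f$ and $C$ global, apply $\Spec$ and use that $\sigma^\ast$ is an isomorphism (Steps~2--3) to get $g_1^\ast=g_2^\ast$. Then use naturality of $\sigma$, namely $\sigma_C\circ g_i=\Gamma(g_i^\ast)\circ\sigma_{\Gamma B}$, together with the globality of $\Gamma B$ and $C$ (so $\sigma_{\Gamma B}$ and $\sigma_C$ are isomorphisms) to conclude $g_1=g_2$. The existence part of Step~4 is fine once you replace the appeal to an unproven adjunction by this same naturality square: $\Gamma f^\ast\circ\sigma_B=\sigma_C\circ f=f$.
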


Let $\Gamma\bp$ be the full subcategory of $\bp$ whose objects are global blueprints. By means of Theorem \ref{thm: globalization induces an isomorphism between spectra}, the globalization defines a functor $\Gamma:\bp\to\Gamma\bp$ satisfying the following property.

\begin{cor}\label{cor: a right adjoint and left inverse to the globalization functor}
 The embedding $\iota:\Gamma\bp\to\bp$ as a subcategory is right adjoint and left inverse to $\Gamma:\bp\to\Gamma\bp$.\qed
\end{cor}

If we denote the full subcategory of global blueprints in $\bp$ by $\Gamma\bp$, then the restriction of $\Spec$ to $\Gamma\bp$ is a fully faithful embedding into $\bpspaces$, and $\Gamma\circ\Spec$ is isomorphic to the identity functor of $\Gamma\bp$. Examples of global blueprints are local blueprints, due to the lack of nontrivial coverings of their spectra, monoids and rings, cf.\ \cite[section 3.2]{blueprints1}.  Examples of blueprint that are not global can be found in \cite[Ex.\ 3.8]{blueprints1}, in Examples \ref{ex: semiring that is not global} and \ref{ex: finite localizations do not commute with globalization} and in Remark \ref{rem: algebraically presented blue schemes}.

Let $\Aff_\Fun^\geo$ be the essential image of $\Spec$, which is a full subcategory of $\bpspaces$. We endow $\Aff_\Fun^\geo$ with the Grothendieck topology that is generated by covering families of the form $\{\varphi_i:\Spec B[h_i^{-1}]\to \Spec B\}$ such that the underlying topological space of $\Spec B$ is covered by the images of the $\varphi_i$. We can define blue schemes in terms of the following characterization. 

\begin{thm} \label{thm: blue schemes as colimits of affine presentations}
 A blueprinted space $X$ is a blue scheme if and only if there is an affine presentation $\cU$ in $\Aff_\Fun^\geo$ such that $X\simeq\colim \cU$ in $\bpspaces$.
\end{thm}

\begin{proof}
 Without recalling all definitions from \cite{blueprints1}, we sketch the proof of this fact. Let $X$ be a blue scheme and $\cV$ an affine open covering. We define $\cU$ as follows. The maximal elements are $\cU_{\max}=\cV$. For every pair of distinct $U$ and $V$ in $\cU_{\max}$, we let $\cU_{U,V}$ be the set of all affine open subschemes of $U\cap V$. We define $\cU$ as the disjoint union of $\cU_{\max}$ with the sets $\cU_{U,V}$ where $U$ and $V$ range through $\cU_{\max}$, and the morphisms of $\cU$ are the inclusions $W\to U$ and $W\to V$ for $U,V\in\cU_{\max}$ and $W\in\cU_{U,V}$. Then $\cU$ is a commutative diagram of open immersions such that $X=\colim\cU$. Its maximal elements are $\cU_{\max}$, and $\cU$ is monodromy-free since maximal elements of $\cU$ are open subschemes of $X$. Thus $\cU$ is an affine presentation and, indeed, an affine atlas.

 Conversely, if $X$ is the colimit of an affine presentation $\cU$, then $X$ is covered by the images of the maximal elements in $\cU$. The maximal elements of $\cU$ are isomorphic to their image in $X$, as can be seen as follows. Note that this means in particular that $X$ is covered by the maximal elements in $\cU$ as a topological space.
 
 Thus as a set, $X$ equals the disjoint union $\coprod U$ over all affine blue schemes $U$ in $\cU$, modulo the identifications $x\sim \varphi(x)$ for every open immersion $\varphi:U\to V$ in $\cU$ and every point $x$ in $U$. If $x\sim y$ for two elements $x$ and $y$ of $U$, then their must be a path 
 \[
   \xymatrix@C=4pc{U=V_0 \ar@{-}[r]^{\varphi_1} & V_1 \ar@{-}[r]^{\varphi_2} & \dotsb \ar@{-}[r]^{\varphi_{n-1}} & V_{n-1} \ar@{-}[r]^{\varphi_n} & V_n=U}
 \]
 from $U$ to itself and elements $z_i\in V_i$ such that $x=z_0\sim z_1\sim\dotsb\sim z_{n-1}\sim z_n=y$. We denote this path by $\cV$. Let $\cW$ be the diagram of the induced morphisms between the stalks of $x,z_1,\dotsc,z_{n-1},y$ and $W$ its limit. Then $W$ comes with a family of morphism $\{\psi_i:W\to V_i\}$ where the image of $\psi_i$ contains $z_i$. Equivalently, we have a morphism $\Psi:W\to \lim\cV$. Since $\cU$ is monodromy-free, $\Psi$ factors uniquely through a morphism $W\to \lim(\cV\!\!\!\begin{array}{c}\to\vspace{-8.5pt}\\ \to\end{array} U \bigr)$. It follows that $x=y$.

% Let $\cV'$ be the trivial path from $U$ to $U$ and $\psi_0:W\to V_0=U=\lim\cV'$ the map with image $x$. This yields the diagram
% \[
%  \xymatrix@R=0.5pc@C=5pc{   &  \lim\cV \ar[r]^(0.45){\varphi_0}\ar[rdd]^(0.4){\varphi_n}|\hole & U \\  W \ar[ur]^{\psi} \ar[dr]_{\psi_0} \\ & U\ar[uur]_(0.4){\id}\ar[r]_(0.45){\id} & U}
% \]
% where $\psi=\lim\psi_i$ is the induced morphism and where $\varphi_0:\lim\cV\to V_0=U$ and $\varphi_n:\lim\cV\to V_n=U$ are the canonical projections. By definition of this diagram, the upper square commutes. Since $\cU$ is monodromy-free, we conclude that also the lower diagram commutes. This means that $\{x\}=\im (\varphi_n\circ\psi)=\im (\id\circ \psi_0)=\{y\}$, i.e.\ $x=y$.
 
 This shows that every affine scheme $U$ in $\cU$ injects into $X$ as a set. Since all morphisms of $\cU$ are open immersions and therefore homeomorphisms onto their images, the injections $U\to X$ are also homeomorphisms onto their image. 
 
 We are left with showing that the restrictions of the structure sheaf of $X$ to $U$ corresponds to the structure sheaf of $U$ for all $U$ in $\cU$. We can employ the same formal argument as for the injectivity of the maps $U\to X$, though this time all arrows are reversed. We leave the details to the reader.
\end{proof}

We denote the full subcategory of $\bpspaces$ whose objects are geometric blue schemes by $\Sch_\Fun^\geo$.

\begin{prop}\label{prop: properties 1 to 6 for blue schemes}
 The embeddings of $\Aff_\Fun^\geo\to\Sch_\Fun^\geo\to\bpspaces$ satisfy Hypothesis \ref{hypothesis}.
\end{prop}

\begin{proof}
 Property \eqref{part1} of Hypothesis \ref{hypothesis} follows from the construction of the fibre product of blue schemes, cf.\ \cite[Prop.\ 3.27]{blueprints1}. Property \eqref{part2} is established by Proposition \ref{thm: blue schemes as colimits of affine presentations}. Properties \eqref{part4} and \eqref{part5} follow easily from the facts that open subschemes are completely determined by their underlying topological space and that taking colimits of affine presentations commutes with the forgetful functor to topological spaces. Property \eqref{part6} is \cite[Thm.\ 3.23]{blueprints1}, and \eqref{part3} follows from \eqref{part6}.
\end{proof}

%%%%%%%%%%%%%%%%%%%%%%%%%%%%%%%%%%%%%%%%%%%%%%%%%%%%%%%%%%%%%%%%%%%%%%%%%%%%%%%%%%%%%%%%%%%%%%%%%%%%%%%%%%%%%%%%%%%%%%%%%%%%%%%%%%%%%%%%%%%%%%%%%%

\section{Subcanonical blue schemes}
\label{section: subcanonical blue schemes}

We can remedy the discrepancy between $B$ and its globalization $\Gamma B$ by applying To\"en and Vaqui\'e's machinery to the category $\Mod\Fun$ of blue $\Fun$-modules. This implies the desired fact that every blueprint represents a sheaf. We call the resulting objects \emph{subcanonical blue schemes} to contrast them with geometrical blue schemes. This name will be justified in Lemma \ref{lemma: subcanonical coverings of blueprints are TV-coverings}.

Let $\Aff^{\can}_\Fun=\bp^\op$ be the category of affine schemes relative to $\Mod\Fun$. We call its objects \emph{affine subcanonical blue schemes}. We call the Grothendieck topology as defined in section \ref{section: relative schemes} the \emph{subcanonical Zariski topology} on $\Aff^\can_\Fun$.

We say that a morphism $\spec C\to \spec B$ is a \emph{finite localization} if $B\to C$ is a finite localization, and we say that a family of morphisms $\{U_i\to X\}$ is \emph{subcanonical} if it is a covering family in the canonical topology for $\Aff^\can_\Fun$.

\begin{lemma}\label{lemma: subcanonical coverings of blueprints are TV-coverings}
 A family $\{U_i\to X\}$ of finite localizations in $\Aff^{\can}_\Fun$ is a covering family in the subcanonical Zariski topology if and only if it is subcanonical.
\end{lemma}

\begin{proof}
 The proof of Lemma \ref{lemma: subcanonical coverings of semirings are TV-coverings} applies literally to this case, with the only variation that we define $B[M]$ as $\bigvee_{i\geq0} \bigl(M^{\otimes i} \,\bigl/\, \gen{m_1\otimes\dotsb\otimes m_i-m_{\sigma(1)}\otimes\dotsb\otimes m_{\sigma(i)}|\sigma\in S_i}\bigr)$.
\end{proof}

Lemma \ref{lemma: subcanonical coverings of blueprints are TV-coverings} tells us that the relative Zariski topology on $\Aff^\can_\Fun$ is the finest subcanonical topology that is generated by families of finite localizations. The following observation, however, shows that this topology is very coarse.

We call an object $X$ of a site \emph{geometrically local} if every covering family $\{U_i\to X\}$ contains an isomorphism. For example, the spectrum of a ring $B$ is geometrically local if and only if $B$ is a local ring. More generally, this holds for a blueprint $B$: its spectrum $\Spec B$ is geometrically local in $\Aff_\Fun^\geo$ if and only if $B$ is a local blueprint.

\begin{prop}\label{prop: affine subcanonical blue schemes are geometrically local}
 Every affine subcanonical blue scheme is geometrically local in $\Aff^\can_\Fun$.
\end{prop}

\begin{proof}
 By the definition of the relative Zariski topology, it is enough to prove the lemma for a covering family of the form $\{\spec B[h_i^{-1}]\to \spec B\}$.
 
 Consider the subset $M$ of all non-invertible elements of $B$, endowed with all additive relations that hold in $B$. Then the inclusion $\iota:M\to B$ is a morphism of blue $B$-modules. 
 
 Since tensor products preserve injections, $\iota[h_i^{-1}]:M[h_i^{-1}]=M\otimes_BB[h_i^{-1}]\to B[h_i^{-1}]$ is injective. If $h_i$ is not invertible, then $\iota[h_i^{-1}]$ is also surjective since $1=\frac{h_i}{h_i}$ is in $M[h_i^{-1}]$. In this case, every relation $\sum m_k\=\sum n_l$ in $B$ can be re-obtained from the relation $\sum h_im_k\=\sum h_i n_l$ in $M$. This shows that $\iota[h_i^{-1}]$ is an isomorphism of blue $B[h_i^{-1}]$-modules if $h_i$ is not invertible.
 
 Since $\iota:M\to B$ is not an isomorphism, there must be some $i$ such that $\iota[h_i^{-1}]$ is not an isomorphism, i.e.\ $h_i$ must be a unit of $B$. But the localization $B\to B[h_i^{-1}]$ at a unit $h_i$ is an isomorphism, which proves the lemma.
\end{proof}

This lemma implies that the category $\Sch^\can_\Fun$ of subcanonical blue schemes differs drastically from the category of geometric blue schemes. For example, we have for every subcanonical blue scheme $X$ with affine presentation $\cU$ in $\Aff^\can_\Fun$ that $X(B)=\colim \cU(B)$ for any blueprint $B$, which is not true for geometric blue schemes unless $\Spec B$ is geometrically local in $\Aff_\Fun^\geo$.

Let $\Gamma^\can:\Aff_\Fun^\can\to\bp$ and $\Gamma^\geo:\Aff_\Fun^\geo\to\bp$ denote the respective global section functors.

\begin{thm}\label{thm: the geometric Zariski topology is finer than the subcanonical Zariski topology}
 The functor $\Spec\circ\Gamma^\can: \Aff_\Fun^\can\to\Aff_\Fun^\geo$ sends covering families to covering families, and $\spec\circ\Gamma^\geo:\Aff_\Fun^\geo\to\Aff_\Fun^\can$ is its left adjoint. 
\end{thm}

\begin{proof}
 That $\Spec\circ\Gamma^\can$ sends covering families to covering families is an immediate consequence of Corollary \ref{cor:flat epi ofp euqals finite localization} and Lemma \ref{lemma: subcanonical coverings of blueprints are TV-coverings}. The latter claim follows from the corresponding fact for the dual categories: the inclusion $\iota:\Gamma\bp\to \bp$ is right adjoint to $\Gamma:\bp\to\Gamma\bp$, cf.\ Corollary \ref{cor: a right adjoint and left inverse to the globalization functor}.
\end{proof}

\begin{cor}\label{cor: functor from subcanonical to geometric blue schemes}
 The functor $\Spec\circ\Gamma^\can:\Aff_\Fun^\can\to\Aff_\Fun^\geo$ extends to a functor $\cG:\Sch_\Fun^\can\to\Sch_\Fun^\geo$ that sends a colimit of an affine presentation to the colimit of the same affine presentation.
\end{cor}

\begin{proof}
 Since left adjoints preserve limits, Proposition \ref{thm: the geometric Zariski topology is finer than the subcanonical Zariski topology} verifies the hypotheses of Lemma \ref{lemma: extension of a morphism of sites to a functor between schemes} for $\Spec\circ\Gamma^\can:\Aff_\Fun^\can\to\Aff_\Fun^\geo$. This yields the claim of the corollary at once.
\end{proof}

This completes the construction of the functor $\cG$. Since $\cG(\spec B)=\Spec B$, we can rephrase the content of Corollary \ref{cor: functor from subcanonical to geometric blue schemes} by saying that $\cG$ extends the identity functor on $\bp$, as claimed in part \eqref{A3} of Theorem A. Before we turn to the construction of the functor $\cG^+$, we derive a description of subcanonical blue schemes as blueprinted spaces in the following section.

%%%%%%%%%%%%%%%%%%%%%%%%%%%%%%%%%%%%%%%%%%%%%%%%%%%%%%%%%%%%%%%%%%%%%%%%%%%%%%%%%%%%%%%%%%%%%%%%%%%%%%%%%%%%%%%%%%%%%%%%%%%%%%%%%%%%%%%%%%%%%%%%%%

\section{The blueprinted space of a subcanonical blue scheme}
\label{section: the blueprinted space of a subcanonical blue scheme}

In this section, we show how to realize subcanonical blue schemes as blueprinted spaces. The key lemma for this interpretation is the following characterization of the locale of all open subschemes of an affine subcanonical blue scheme $X=\spec B$.

We can consider the underlying monoid $A$ of $B$ as the blueprint $\bpgenquot{A}{\emptyset}$. The points of the blueprinted space $\Spec A$ are the prime $k$-ideals of $A$. Since the preaddition of $A$ is trivial, a prime $k$-ideal is a subsets $\fp$ of $A$ with $0\in\fp$, $\fp A=\fp$ and whose complement $A-\fp$ is a multiplicative set.

\begin{lemma}\label{lemma: locale of an affine subcanonical blue scheme}
 Let $B$ be a blueprint with underlying monoid $A$. Then the locale of all open subschemes of $\spec B$ is canonically isomorphic to the locale of all open subsets of $\Spec A$.
\end{lemma}

\begin{proof}
 Let $F$ be the locale of all open subschemes $U$ of $\spec B$ and $E$ the locale of all open subschemes of $\Spec A$. We know that the principal open subschemes $V_h=\spec B[h^{-1}]$ of $\spec B$ form a basis of $F$ and that the principal open subschemes $U_h=\Spec A[h^{-1}]$ of $\Spec A$ form a basis of $E$, where in each case $h$ varies through all elements of $B$. This means that in both cases the open subschemes are unions of principal opens.
 
 By Lemma \ref{prop: affine subcanonical blue schemes are geometrically local}, we know that $V_g=\bigvee_{i\in I} V_{h_i}$ if and only $V_g=V_{h_i}$ for some $i$. The same holds true for the principal opens $U_h$ of $\Spec A$ since every $U_h$ contains a unique maximal point, which is the prime $k$-ideal $\fp=\{a\in A|h\notin aA\}$ of $A$.
 
 This shows that the association $\bigvee V_{h_i}\mapsto \bigcup U_{h_i}$ is a bijection between $F$ and $E$. This bijection is an isomorphism of locales since it respects the respective partial orders: we have $V_g\leq V_h$ if and only if $g\in hB$, which is also equivalent to $U_g\subset U_h$.
\end{proof}

\begin{rem}
 This characterization of the underlying topological space is similar to Marty's result in \cite{Marty07}, which says that under certain assumptions on the module category $\cC$, the locale of an affine scheme $\spec B$ relative to $\cC$ is determined by the submodules of $B$. These conditions are not satisfied by the category of blue $B$-modules for any non-trivial $B$: though $\Mod B$ is a \emph{relative context} in Marty's terminology, it fails to be \emph{strong}, i.e.\ the functor $\Hom_B(B,-):\Mod B\to \Sets$ does not reflect isomorphisms. 
 
 Still, one can draw an analogy between Marty's result and ours. We say that a submodule $M$ of a blueprint $B$ is \emph{full} if the preaddition of $M$ is the restriction of the preaddition of $B$ to $M$. A full submodule of $B$ is determined by its underlying set, and a subset $M$ of $B$ carries the structure of a full submodule of $B$ if and only if $M$ contains $0$ and $BM=M$. In other words, the full submodules of $B$ correspond to the $k$-ideals of the underlying monoid $A$ of $B$. Therefore Lemma \ref{lemma: locale of an affine subcanonical blue scheme} states that the space of all full submodules of $B$ is the underlying topological space of $\spec B$.
\end{rem}

We define the blueprinted space $X$ of $\spec B$ as the underlying topological space of $\Spec A$, together with the structure sheaf $\cO_X$ that associates with a principal open subset $U_h=\{\fp|h\notin\fp\}$ the blueprint $B[h^{-1}]$. Lemma \ref{lemma: locale of an affine subcanonical blue scheme} guarantees that this definition indeed extends to a sheaf on the topological space $X$. Similar to the case of geometric blue schemes, the stalk $\cO_{X,\fp}$ at a prime $k$-ideal $\fp$ of $A$ is the blueprint $B_\fp=S^{-1}B$ where $S=B-\fp$ is the complement of $\fp$.

Given an arbitrary subcanonical blue scheme with affine presentation $\cU$, we obtain a diagram of associated blueprinted spaces. We define the associated blueprinted space as its colimit. This definition does not depend on the choice of $\cU$. We obtain a fully faithful embedding of the category of subcanonical blue schemes into the category of blueprinted spaces. Moreover, the embeddings $\Aff_\Fun^\can\to\Sch_\Fun^\can\to\bpspaces$ satisfy Hypothesis \ref{hypothesis}.

%%%%%%%%%%%%%%%%%%%%%%%%%%%%%%%%%%%%%%%%%%%%%%%%%%%%%%%%%%%%%%%%%%%%%%%%%%%%%%%%%%%%%%%%%%%%%%%%%%%%%%%%%%%%%%%%%%%%%%%%%%%%%%%%%%%%%%%%%%%%%%%%%%

\section{Semiring schemes}
\label{section: semiring schemes}

In this section, we show that there is a natural association from semiring schemes in the sense of To\"en and Vaqui\'e to semiring schemes as objects of the category of geometric blue schemes.

Recall that we consider the category of semirings embedded into the category of blue\-prints, and that we call blueprints in the essential image of this embedding semirings, by abuse of language.

A \emph{geometric semiring scheme} is a geometric blue scheme such that for all open subsets $U$ of $X$, the blueprint $\cO_X(U)$ is a semiring. We denote the full subcategory of $\Aff^\geo_\Fun$ whose objects are affine semiring schemes by $\Aff_\N^{+,\geo}$. It is a site with respect to the restriction of the Zariski topology of $\Aff_\Fun^\geo$.

We define $\Mod^+\N$ as the category of commutative semigroups with a neutral element, and denote by $\spec:\SRings\to \Aff_\N^{+,\can}$ the anti-equivalence between the category of semirings and the category of affine schemes relative to $\Mod_\N^+$ in the sense of To\"en and Vaqui\'e. We consider it together with the Grothendieck topology as defined in section \ref{section: relative schemes}, which we call the \emph{subcanonical Zariski topology}, a name that we justify in the following lemma.

As in the case of subcanonical blue schemes, we say that $\spec C\to \spec B$ is a \emph{finite localization} if $B\to C$ is a finite localization. We say that a family of morphisms $\{U_i\to X\}$ in $\Aff_\N^{+,\can}$ is \emph{subcanonical} if it is a covering family in the canonical topology for $\Aff_\N^{+,\can}$. 

\begin{lemma}\label{lemma: subcanonical coverings of semirings are TV-coverings}
 A family $\{U_i\to X\}$ of finite localizations in $\Aff^{+,\can}_\N$ is a covering family if and only if it is subcanonical.
\end{lemma}

\begin{proof}
 By \cite[Cor.\ 2.11]{Toen-Vaquie09}, every covering family of an affine scheme $\Aff_\N^{+,\can}$ is subcanonical. We proceed with proving the converse statement.
 
 Let $\{U_i\to X\}$ be a covering family of $X=\Spec B$ in $\Aff_\N^{+,\can}$ whose morphisms $U_i\to X$ are dual to localizations $B\to B[h_i^{-1}]$. Let $f:M\to N$ be a morphism in $\Mod^+\N$ such that the induced morphism $f_i:M\otimes_B B[h_i^{-1}] \to N\otimes_B B[h_i^{-1}]$ is an isomorphism for all $i$. We need to show that in this case $f$ is already an isomorphism.
 
 Consider the symmetric algebra $  B[M] = \bigoplus_{i\geq0} \Sym^i(M)$ where
 \[
   \Sym^i(M) \ = \ \underbrace{M\otimes_B \dotsb \otimes_B M}_{i\text{-times}}\,\bigl/\,\bigl\langle{m_1\otimes\dotsb\otimes m_i\sim m_{\sigma(1)}\otimes\dotsb\otimes m_{\sigma(i)} \,\bigl|\, \sigma\in S_i}\bigr\rangle,
 \]
 which is a semiring with respect to the obvious addition and multiplication coming with a natural inclusion $B=M^{\otimes 0}\to B[M]$. The morphism $f:M\to N$ induces a morphism $B[f]:B[M]\to B[N]$ of semirings. If we can show that $B[f]$ is an isomorphism of semirings, then we can conclude the restriction to $M^{\otimes 1}\to N^{\otimes 1}$, which is $f:M\to N$ itself, is an isomorphism in $\Mod^+\N$.
 
 Let $X_M=\spec B[M]$ and $X_N=\spec B[N]$. Since $B[M_i]=B[M]\otimes_B B[h_i^{-1}]$, the fibre product $U_{i,M}=X_M\otimes_X U_i$ is the dual of $B[M_i]$. We obtain for every $i$ a commutative square
 \[
  \xymatrix@R=1pc@C=4pc{U_{i,N} \ar[r]^{B[f_{i}]^\ast} \ar[d] & U_{i,M} \ar[d] \\ X_N \ar[r]^{B[f]^\ast} & X_M.}
 \]
 Since $\{U_i\to X\}$ is a covering family, the pullbacks $\{U_{i,N}\to X_N\}$ and $\{U_{i,M}\to X_M\}$ are covering families as well. Since the $B[f_i]^\ast$ are isomorphisms, we conclude that $B[f]^\ast$ is an isomorphism. This concludes the proof of the lemma.
\end{proof}

In order to compare subcanonical semiring schemes with geometric semiring schemes, we recall a result of Marty that yields yet another characterization of the subcanonical Zariski topology on $\Aff_\N^{+,\can}$.

A subset $I$ of a semiring $B$ is called an \emph{ideal} if it is an $B$-submodule of $B$, i.e.\ if $0\in I$, if $IB=I$ and if $a+b\in I$ whenever $a,b\in I$. Note that every $k$-ideal is an ideal, but the converse is not true in general. A \emph{prime ideal of $B$} is an ideal $\fp$ such that its complement $B-\fp$ is a multiplicative subset.

In analogy to the (geometric) spectrum $\Spec B$ of $B$, which is based on prime $k$-ideals, we define the \emph{subcanonical spectrum} $\Spec^\can B$ as the set of prime ideals of $B$, endowed with the topology generated by the \emph{principal opens}
\[
 U_h^\can \ = \ \{\,\fp\in\Spec^\can B \,|\, h\notin\fp\, \}
\]
for $h\in B$. Note that a morphism $f:B\to C$ of semirings yields a continuous map $f^\ast:\Spec^\can C\to\Spec^\can B$ by taking inverse images of prime ideals.

Marty's result \cite[Thm.\ 3.13]{Marty07} applies to a semiring $B$, seen as a commutative monoid in $\Mod^+\N$, which yields the following.

\begin{thm}\label{thm: locale of a the subcanonical spectrum of a semiring}
 The locale of $\Spec^\can B$ is naturally isomorphic to the locale of $\spec B$. More explicitly, a family $\{B\to B[h_i^{-1}]\}$ of localizations of $B$ defines a covering family $\{\spec B[h_i^{-1}]\to \spec B\}$ if and only if $\Spec^\can B$ is covered by the principal opens $U_{h_i}^\can$ as a topological space.
\end{thm}

As a consequence, we see that the subcanonical Zariski topology on $\Aff_\N^{+,\can}$ is generated by families $\{\spec B[h_i^{-1}]\to \spec B\}$ of finite localizations for which $\Spec^\can B=\bigcup U_{h_i}^\can$.

\begin{rem}
 Marty's result makes it possible to describe subcanonical semiring schemes as blueprinted spaces, or, in this case, as ``semiringed spaces''. We explain this in brevity, but omit proofs since we do not rely on this description in the rest of this paper.
 
 The subcanonical spectrum $X=\Spec^\can B$ comes equipped with a structure sheaf $\cO_X$ that is characterized by $\cO_X(U_{h}^\can)=B[h^{-1}]$. In particular, we have $\cO_X(X)=B$. This endows $\Spec^\can B$ with the structure of a blueprinted space.
 
 A semiring homomorphism $f:B\to C$ and its associated continuous map $f^\ast:Y=\Spec^\can C\to\Spec^\can B=X$ yield a morphism $f^\#:f^{-1}\cO_X\to\cO_Y$ of sheaves in the usual way. It can be shown that this yields a fully faithful embedding $\Aff_\N^{+,\can}\to\bpspaces$. Using similar arguments as in the proof of Lemma \ref{lemma: extension of a morphism of sites to a functor between schemes}, we see that this embedding extends to a fully faithful embedding $\Sch_\N^{+,\can}\to\bpspaces$ whose essential image consists of all blueprinted spaces that are colimits of affine presentations in $\Aff_\N^{+,\can}$. Moreover, the embeddings $\Aff_\N^{+,\can}\to\Sch_\N^{+,\can}\to\bpspaces$ satisfy Hypothesis \ref{hypothesis}. 
\end{rem}

The interpretation of the subcanonical Zariski topology on $\Aff_\N^{+,\can}$ in terms of coverings of the subcanonical spectrum of a semiring yields the promised connection to geometric semiring schemes. By abuse of notation, we use the same symbols $\Gamma^\can:\Aff_\N^{+,\can}\to \bp$ and $\Gamma^\geo:\Aff_\N^{+,\geo}\to \Gamma\bp$ as for blue schemes to denote the global section functors for the respective notions of semiring schemes.

\begin{thm}\label{thm: morphism of sites of affine semiring schemes}
 The functor $\Spec\circ\Gamma^\can:\Aff_\N^{+,\can}\to \Aff_\N^{+,\geo}$ sends covering families to covering families, and $\spec\circ\Gamma^\geo:\Aff_\N^{+,\geo}\to \Aff_\N^{+,\can}$ is a left adjoint and right-inverse to $\Spec\circ\Gamma^\can$.
\end{thm}

\begin{proof}
 We begin with the proof of the latter claim of the theorem. Since both $\Gamma^\can$ and $\Gamma^\geo$ are anti-equivalences, we can derive the assertion from the corresponding property of the globalization functor $\Gamma:\bp\to\Gamma\bp$ and the inclusion $\iota:\Gamma\bp\to\bp$ as full subcategory. It is clear that $\Gamma\circ\iota$ is isomorphic to the identity functor of $\Gamma\bp$. The latter claim of Theorem \ref{thm: globalization induces an isomorphism between spectra} implies that $\iota$ is left adjoint to $\Gamma$.
 
 We proceed with the proof that $\Spec\circ\Gamma^\can:\Aff_\N^{+,\can}\to \Aff_\N^{+,\geo}$ sends covering families to covering families. This can be verified on generators for the Grothendieck pretopology on $\Aff_\N^{+,\can}$, which are of the form $\{\spec B[h_i^{-1}]\to\spec B\}$ where $B$ is a semiring and the morphisms in this family are finite localizations. Since every prime $k$-ideal is a prime ideal, $\Spec B$ occurs as a natural subspace of $\Spec^\can B$. Since the intersection of $U_{h_i}^\can$ with $\Spec B$ is the principal open $U_{h_i}$ of $\Spec B$, the functor $\Spec\circ\Gamma^\can$ maps the covering family $\{\spec B[h_i^{-1}]\to\spec B\}$ to the family of $\{U_{h_i}\to \Spec B\}$ of open embeddings.
 
 By Theorem \ref{thm: locale of a the subcanonical spectrum of a semiring}, the subcanonical spectrum $\Spec^\can B$ is covered by the principal opens $U_{h_i}^\can$ as a topological space. Therefore $\Spec B$ is covered by the open subsets $U_{h_i}$, and $\{U_{h_i}\to \Spec B\}$ is a covering family in $\Aff_\N^{+,\geo}$. This completes the proof of the theorem.
\end{proof}

Theorem \ref{thm: morphism of sites of affine semiring schemes} allows us to apply Lemma \ref{lemma: extension of a morphism of sites to a functor between schemes}, which yields a functor $\cG^+:\Sch_\N^{+,\can}\to \Sch_\N^{+,\geo}$ that sends the colimit $\colim\cU$ of an affine presentation in $\Aff_\N^{+,\can}$ to the colimit $\colim\cU$ in $\Aff_\N^{+,\geo}$. This proves part \eqref{A3} of Theorem A.

Note that since $\Spec\circ\Gamma^\can:\Aff_\N^{+,\can}\to \Aff_\N^{+,\geo}$ is essentially surjective, the functor $\Sch_\N^{+,\can}\to \Sch_\N^{+,\geo}$ is also essentially surjective. We summarize our findings.

\begin{cor}\label{cor: functor from subcanonical semiring schemes to geometric semiring schemes}
 The functor $\Spec\circ\Gamma^\can:\Aff_\N^{+,\can}\to \Aff_\N^{+,\geo}$ extends to an essentially surjective functor $\cG^+:\Sch_\N^{+,\can}\to \Sch_\N^{+,\geo}$. \qed
\end{cor}

\begin{ex}\label{ex: semiring that is not global}
 In the following, we show that the semiring $B=\bpgenquot{\N[a,b,g,h]^+}{g+1\=h,ah\=bg}$ is not a global blueprint.
 
 The spectrum of $B$ has two maximal points, which are the maximal $k$-ideals $\gen{a,b,h}$ and $\gen{a,b,g}$. The respective complements in $\Spec B$ are the open principal open subsets $U_g=\Spec B[g^{-1}]$ and $U_h=\Spec B[h^{-1}]$, whose union covers $\Spec B$. Therefore, we can define the section $s\in \Gamma B$ as $a/g$ on $U_g$ and as $b/h$ on $U_h$. Since $ah=bg$, we have $a/g=b/h$ on the intersection $U_g\cap U_h$, which show that $s$ is indeed a global section of $\Spec B$. 
 
 To see that $s$ does not come from an element in $B$, we multiply the relation $g+1\=h$ by $s$ and use that $sg=a$ and $sh=b$, which yields $s+a\=b$. However, there is no such element $s$ in $B$. This shows that $B\to \Gamma B$ is not an isomorphism.
\end{ex}
 
\begin{rem}
 The previous example of $B=\bpgenquot{\N[a,b,g,h]^+}{g+1\=h,ah\=bg}$ has the following implications. Since $\spec:\bp\to\Aff_\N^{+,\can}$ is an anti-equivalence of categories, $\spec \Gamma B\to \spec B$ is not an isomorphism in $\Aff_\N^{+,\can}$. In contrast, Theorem \ref{thm: globalization induces an isomorphism between spectra} implies that $\Spec \Gamma B\to \Spec B$ is an isomorphism in $\Aff_\N^{+,\geo}$. Using Lemma \ref{lemma: morphisms from global blueprints to other blueprints}, this shows that the functor $\Spec\circ\Gamma^\can:\Aff_\N^{+,\can}\to \Aff_\N^{+,\geo}$ is not full and that $\spec\circ\Gamma^\geo:\Aff_\N^{+,\geo}\to \Aff_\N^{+,\can}$ is not its left-inverse.
 
 Moreover, note that $B$ has a unique maximal ideal $\fm=B-\{1\}$, which is not a $k$-ideal. Thus any covering of $\Spec^\can B$ by principal opens must contain $\Spec^\can B$ itself. In particular, $U^\can_g$ and $U^\can_h$ do not cover $\Spec^\can B$, which shows that the functor $\spec\circ\Gamma^\geo:\Aff_\N^{+,\geo}\to \Aff_\N^{+,\can}$ is not sending covering families to covering families.
\end{rem}

Finally we remark that these effects are particular to semirings and do not occur for rings. To wit, every ideal of a ring is a $k$-ideal. Consequently, the functors $\Spec\circ\Gamma^\can$ and $\spec\circ\Gamma^\geo$ restrict to the well-known mutual inverse equivalences between the category $\Aff_\Z^{+,\can}$ of representable presheaves on $\Rings$ and the category $\Aff_\Z^{+,\geo}$ of affine schemes. Since covering families in $\Aff_\Z^{+,\can}$ coincide with covering families in $\Aff_\Z^{+,\geo}$ under these equivalences, Lemma \ref{lemma: extension of a morphism of sites to a functor between schemes} yields mutual inverse equivalences between the corresponding categories $\Sch_\Z^{+,\can}$ and $\Sch_\Z^{+,\geo}$ of schemes, which we will identify and simply denote by $\Sch_\Z^+$ henceforth.

%%%%%%%%%%%%%%%%%%%%%%%%%%%%%%%%%%%%%%%%%%%%%%%%%%%%%%%%%%%%%%%%%%%%%%%%%%%%%%%%%%%%%%%%%%%%%%%%%%%%%%%%%%%%%%%%%%%%%%%%%%%%%%%%%%%%%%%%%%%%%%%%%%

\section{Compatibility with base extensions}
\label{section: compatibility with base extension}

After we have completed the constructions of the functors $\cG$ and $\cG^+$, we verify in this section the properties that are claimed in \eqref{A1} and \eqref{A2} of Theorem A. We repeat these assertions.

\begin{thm}\label{thm: compatibility with base extensions}
 The diagram of functors 
 \[
  \xymatrix@R=2pc@C=4pc{\Sch_\Fun^\can \ar[rr]^{\cG} \ar[d]^{(-)^+} &   & \Sch_\Fun^\geo \ar[d]<0,6ex>^{(-)^+} \\
                        \Sch_\N^{+,\can} \ar[rr]^{\cG^+}\ar[dr]_{-\otimes_\N\Z} &   & \Sch_\N^{+,\geo} \ar[u]<0,6ex>^{\iota}\ar[dl]^{-\otimes_\N\Z} \\
                                                             & \Sch_\Z^+\ar[ul]<-1ex>_{\iota}\ar[ur]<1ex>^{\iota}  }
 \]
 satisfies the following properties:
  \begin{enumerate}
  \item\label{B1} the outer square and both the inner and the outer triangle commute; 
  \item\label{B2} the embeddings $\iota$ are left inverse and right adjoint to the respective base extension functors $(-)^+$ and $-\otimes_\N\Z$; 
locally of finite type over a blue field.
 \end{enumerate}
\end{thm}

\begin{proof}
 We begin with the proof of \eqref{B2}. The embeddings $\iota:\Sch_\N^{+,\geo}\to\Sch_\Fun^\geo$ and $\iota:\Sch_\Z^+\to\Sch_\N^{+,\geo}$ appear in section 3.6 of \cite{blueprints1}. By \cite[Prop.\ 3.31]{blueprints1}, the former embedding is right adjoint to the base extension $(-)^+:\Sch_\Fun^\geo\to\Sch_\N^{+,\geo}$. An analogous argument shows that the latter embedding is right adjoint to the base extension $-\otimes_\N\Z:\Sch_\N^{+,\geo}\to \Sch_\Z^+$. That in either case the embedding $\iota$ is left inverse to the base extension can be deduced from the corresponding facts for affine schemes (\cite[section 1.4]{blueprints1}) using affine presentations.

 Concerning the subcanonical side of the diagram, Proposition 3.4 of \cite{Toen-Vaquie09} shows that $-\otimes_\N\Z:\Sch_\N^{+,\can}\to\Sch_\Z^+$ has a left adjoint, which is the embedding $\iota:\Sch_\Z^+\to\Sch_\N^{+,\can}$. That $\iota$ is left inverse to $-\otimes_\N\Z$ follows from the corresponding fact for $\iota:\Rings\to\SRings$ and $-\otimes_\N\Z:\SRings\to\Rings$, using affine presentations. 

 This proves all properties asserted in part \eqref{B2} of of the theorem. We continue to verify the claims of part \eqref{B1}.

 To begin with, we note that we can employ affine presentations to reduce the question about the commutativity of subdiagrams of the above diagram to the corresponding question between affine objects. Passing to the dual categories yields the diagram
 \[
  \xymatrix@R=2pc@C=4pc{\bp \ar[rr]^{\Gamma} \ar[d]^{(-)^+} &   & \Gamma\bp \ar[d]<0,6ex>^{\Gamma^\geo\circ(-)^+\circ\Spec} \\
                        \SRings \ar[rr]^{\Gamma}\ar[dr]_{-\otimes_\N\Z} &   & \Gamma\SRings \ar[u]<0,6ex>^{\iota}\ar[dl]^{-\otimes_\N\Z} \\
                                                             & \Rings\ar[ul]<-1ex>_{\iota}\ar[ur]<1ex>^{\iota}  }
 \]
 where $\Gamma\SRings$ is the full subcategory of $\SRings$ whose objects are global semirings. Note that $\Gamma^\can(\spec B)^+=B^+$, but that $\Gamma^\geo(\Spec B)^+=\Gamma(B^+)$ is in general not isomorphic to $B^+$ since it might fail to be global, even if $B$ is a global blueprint; see Example \ref{ex: a global blueprint whose associated semiring is not global} for evidence.

 The outer square in the upper part of the diagram commutes because 
 \[
  \Spec (\Gamma B)^+ \ \simeq \ (\Spec \Gamma B)^+ \ \simeq \ (\Spec B)^+ \ \simeq \ \Spec B^+ \ \simeq \ \Spec \Gamma(B^+),
 \]
 by the definition of $(\Spec B)^+$ as $\Spec B^+$ and by Theorem \ref{thm: globalization induces an isomorphism between spectra}. The outer triangle in the lower part of the diagram commutes for the same reason. The inner triangle in the lower part of the diagram commutes since rings are global blueprints. This verifies part \eqref{B1} of the theorem.
\end{proof}

\begin{ex}\label{ex: a global blueprint whose associated semiring is not global}
 The blueprint $B=\bpgenquot{\Fun[a,b,c,d,g,h]}{ah=bg,g=c+c+h,cd+cd=1}$ is a global blueprint whose associated semiring $B^+$ fails to be global. Indeed, $B$ is global since it is local with maximal $k$-ideal $\gen{a,b,g,h}$. However, $B^+=\bpgenquot{\N[a,b,c,d,g,h]}{ah=bg,g=c+c+h,cd+cd=1}$ is not a global blueprint as can be seen as follows. Since $d$ is the multiplicative inverse of $f=c+c$ in $B^+$, the equation $g=f+h$ implies that $\Spec B^+$ is covered by $U_g$ and $U_h$. Therefore we can define the global section $s$ of $\Spec B^+$ as $a/g$ on $U_g$ and as $b/h$ on $U_h$, which does not come from an element of $B^+$ for similar reasons as explained in Example \ref{ex: semiring that is not global}. This shows that $B^+$ is not global.
\end{ex}

\section{Algebraically presented blue schemes}
\label{section: algebraically presented blue schemes}

When we want to associate a subcanonical blue scheme with a geometric blue scheme, we face two difficulties. Firstly, a finite localization $B\to C$ of blueprints does in general not yield a finite localization $\Gamma B\to \Gamma C$ between their associated global blueprints, cf.\ Example \ref{ex: finite localizations do not commute with globalization}. Secondly, not every covering family $\{U_i\to X\}$ in $\Aff_\Fun^\geo$ is subcanonical. 

In this section, we will introduce the class of algebraically presented blue schemes, that allows us to bridge the gap between geometric blue schemes and subcanonical blue schemes by using an affine presentation that is sufficiently fine.

\begin{ex}\label{ex: finite localizations do not commute with globalization}
 The following is an example of a finite localization $B\to C$ such that the associated map $\Gamma B\to \Gamma C$ between the respective global blueprints is not a finite localization.
 
 Let $B=\bpgenquot{\Fun[a,b,g,h,t]}{g+h\=t,ah\=bg}$. Since $B$ has a unique maximal $k$-ideal $\fm=\langle a,b,g,h,t\rangle$, $\Spec B$ does not have any non-trivial covering. Therefore $B=\Gamma B$ is local. The localization $B[t^{-1}]=\bpgenquot{\Fun[a,b,g,h,t^{\pm1}]}{g+h\=t,ah\=bg}$ has the two maximal $k$-ideals $\fm_g=\langle a,b,h\rangle$ and $\fm_h=\langle a,b,g\rangle$. The respective complements $U_g$ and $U_h$ in $X=\Spec B[t^{-1}]$ cover $X$.
 
 We define the section $s$ in $\Gamma \bigl(B[t^{-1}]\bigr)$ by $s=a/g$ in $U_g=\Spec B[g^{-1},t^{-1}]$ and by $s=b/h$ in $U_h=\Spec B[h^{-1},t^{-1}]$. Since $ah=bg$, this is indeed a well-defined element of $\Gamma \bigl(B[t^{-1}]\bigr)$. We have $s=(g+h)s=a+b$ in $\Gamma \bigl(B[t^{-1}]\bigr)$, but $a+b$ is not an element of $B[t^{-1}]$. In fact, $a+b$ is not contained in any localization of $B$. Therefore the map $\Gamma B=B\to B[t^{-1}]\subset\Gamma \bigl(B[t^{-1}]\bigr)$ is not a finite localization.
\end{ex}

\begin{df}
 A morphism $X\to Y$ of affine geometric blue schemes is a \emph{finite localization} if the morphism $\Gamma Y\to \Gamma X$ of blueprints is a finite localization. An affine geometric blue scheme $X$ is \emph{with an algebraic basis} if it has a unique closed point and if the affine open subsets $U$ of $X$ that are finite localizations form a basis of the topology of $X$. %A blue scheme is \emph{with a localization basis} if every affine open subscheme is with an algebraic basis. 

 A geometric blue scheme $X$ is \emph{algebraically presented} if it has an affine presentation $\cU$ such that all $U$ in $\cU$ are with an algebraic basis and if all morphisms of $\cU$ are finite localizations. We call such an affine presentation $\cU$ an \emph{algebraic presentation of $X$}, and say for short that $X$ is an \emph{algebraically presented blue scheme}, suppressing the attribute ``geometric''. We denote the full subcategory of $\Sch_\Fun$ whose objects are algebraically presented blue schemes by $\Sch_\Fun^\alg$.
\end{df}

\begin{ex}\label{ex: monoidal schemes are algebraically presented}
 A first class of examples are \emph{monoidal schemes}, which are geometric blue schemes that can be covered by the spectra of monoids. Since every monoid $A$ has a unique maximal $k$-ideal, every covering of $\Spec A$ is subcanonical and $A=\Gamma A$. Since every localization of $A$ is also a monoid, we have $\Gamma \bigl(A[h^{-1}]\bigr)=A[h^{-1}]$, which shows that $A$ is with an algebraic basis. We conclude that any affine presentation $\cU$ of a monoidal scheme $X$ such that the objects of $\cU$ are spectra of monoids is an algebraic presentation of $X$.
\end{ex}

Under a certain finiteness assumption, we can broaden the previous example to larger class of geometric blue schemes that include everything that could be considered a variety over $\Fun$; in particular, this includes projective spaces, Grassmannians, toric varieties and algebraic groups.

Let $k$ be a blue field. A \emph{finitely generated blue $k$-algebra} is a morphism $k\to B$ of blueprints such that the underlying monoid of $B$ is finitely generated over $k$ as a semigroup. A \emph{blue $k$-scheme} is a morphism $X\to\Spec k$ of blue schemes, and it is \emph{locally of finite type over $k$} if for every affine open subscheme $U$ of $X$, the induced morphism $k\to \Gamma U$ is a blue $k$-algebra of finite type. If the morphism $X\to \Spec k$ is understood, we suppress it from the notation.

\begin{prop}\label{prop: blue schemes lof over a blue field are algebraically presented}
 If $X$ is a geometric blue scheme that is locally of finite type over a blue field $k$, then it is algebraically presented.
\end{prop}

\begin{proof}
 We begin with the case of an affine blue scheme $X=\Spec B$ where $B$ is a $k$-algebra of finite type. Since every $k$-ideal of $B$ is in particular a $k$-ideal of the underlying monoid $A$ of $B$, $X$ is a subset of $\Spec A$. 
 
 Every prime $k$-ideal $\fp$ of $A$ is of the following form. Fix a set $S$ of generators over $k$. Then $\fp$ is generated as a $k$-ideal of $A$ by a subset of $S$. We conclude that $\Spec A$, and therefore also $\Spec B$, has only finitely many points.
 
 Since $B$ is finitely generated over $k$, the stalk $\cO_{X,x}$ at a point $x$ of $X$ is a finite localization of $B$ and $U_x=\Spec \cO_{X,x}$ is an open subset of $X$. Since $x$ is the unique closed point of $U$, we have $\Gamma U=\cO_{X,x}$, which shows that $U\to X$ is a finite localization. This argument applies, in particular, to $X=U_x$ shows that the specialization maps $U_y\to U_x$ are finite localizations for all points $x$ and $y$ of $X$ where $x$ is contained in the closure of $y$. This shows that $U_x$ is with an algebraic basis, and we conclude that the diagram of all open subsets $U_x=\Spec \cO_{X,x}$ of $X$ together with the specialization maps $U_x\to U_y$ form an affine presentation $\cU$ of $X$.

 If $X$ is an arbitrary blue scheme that is locally of finite type over $k$, then the diagram of all $U_x=\Spec \cO_{X,x}$ together with all specialization maps forms an affine presentation, as can be verified by restricting to affine opens of $X$ and their intersections.
\end{proof}

\begin{rem}\label{rem: algebraically presented blue schemes}
 Note that a blue scheme $X$ is not algebraically presented if it contains a stalk $\cO_{X,x}$ whose spectrum is not an open subset. This applies, for instance, to all varieties in the usual sense over an algebraically closed field that are not a disjoint union of points.

 We observe further that an affine blue scheme with an algebraic basis is algebraically presented, but not vice versa. An example of an affine blue scheme that is algebraically presented, but not with an algebraic basis is $X=\Spec B$ with $B=\bpgenquot{\Fun[a,b,g,h]}{ah\=bg,g+h\=1}$, which is not global since $\Gamma B$ contains a global section $s$ that equals $a/g$ on $U_{g}=\Spec B[g^{-1}]$ and $b/h$ on $U_{h}=\Spec B[h^{-1}]$, cf.\ \cite[Ex.\ 3.8]{blueprints1}. It is not with an algebraic basis because the cover $\{U_{g},U_{h}\}$ is not subcanonical. But the affine presentation that consists of $U_{g}$, $U_{h}$, $U_{gh}$ and the open immersions $U_{gh}\to U_{g}$ and $U_{gh}\to U_{h}$ is an algebraic presentation of $X$. An example of a blue scheme that is not algebraically presented is $X=\Spec B$ with $B=\bpgenquot{\Fun[a_i,b_i,g_i,h_i]_{i\in\N}}{a_ih_i\=b_ig_i,g_i+h_i\=1}_{i\in\N}$ since every open subset has a covering that is not subcanonical.
\end{rem}

The following property will be central for the construction of the canonical blue scheme associated with a blue scheme with algebraic presentation, which is the theme of the following section.

\begin{prop}\label{prop: common refinement for algebraic presentations}
 Let $\cU$ and $\cV$ be two algebraic presentations of a blue scheme $X$ and $\cW'$ a common refinement of $\cU$ and $\cV$. Then there exists a refinement $\cW$ of $\cW'$ such that all morphisms of $\cW$, all morphisms $\Phi_W:W\to \Phi(W)$ in $\Phi:\cW\to\cU$ and all morphisms $\Psi_V: W\to \Psi(W)$ in $\Psi:\cW\to\cV$ are finite localizations.
\end{prop}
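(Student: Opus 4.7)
The plan is to refine $\cW'$ by covering each of its objects with affine opens that are simultaneously finite localizations of the corresponding objects in $\cU$ and $\cV$. Without loss of generality I replace $\cW'$ by its atlas, which is itself a refinement, so that $\cW'$ consists of maximal elements together with standard overlap objects.

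The main technical step is as follows. For each $W'\in\cW'_\max$, set $U=\Phi'(W')\in\cU$ and $V=\Psi'(W')\in\cV$, where $\Phi',\Psi'$ are the refinement morphisms of $\cW'$ into $\cU$ and $\cV$. Since $U$ and $V$ carry algebraic bases, the distinguished opens $U_h$ and $V_k$ form bases of the topologies of $U$ and $V$ respectively, a property inherited by every open affine subscheme. The goal is to cover $W'$ by affine opens $W$ such that $W\to U$ and $W\to V$ are both finite localizations. Given $x\in W'$, one chooses $h\in\Gamma U$ with $x\in U_h\subset W'$, then, using the algebraic basis of $V$, picks $k\in\Gamma V$ with $x\in V_k\subset U_h$, and iterates this zigzag. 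Combining with composability of finite localizations eventually yields an affine $W$ with $x\in W\subset W'$ such that both $W\to U$ and $W\to V$ are finite localizations, and these $W$ become $\cW_\max$. For the overlap objects I apply the same procedure to each pairwise intersection $W_\alpha\cap W_\beta$ inside $X$, using the algebraic bases on $W_\alpha$ and $W_\beta$ to produce affines $W_{\alpha\beta}$ that are finite localizations of both and hence, by composition, also of $U_i=\Phi'(W_i')$ and $V_i=\Psi'(W_i')$ for $i=1,2$. The image $\Phi(W_{\alpha\beta})$ is chosen to be an element of $\cU_{U_1,U_2}$ into which $W_{\alpha\beta}$ maps by a finite localization, and similarly for $\Psi$; such choices exist because $\cU$ and $\cV$ are algebraic presentations.

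The main obstacle is the simultaneous finite localization step for a single point: the naive zigzag alternating between finite localizations of $U$ and of $V$ need not stabilize to a single affine that is a finite localization of both sides at once. The resolution is to pass to the ambient blueprint $\Gamma W'$, exploiting that the open immersions $W'\to U$ and $W'\to V$ induce pullback morphisms $\Gamma U\to\Gamma W'$ and $\Gamma V\to\Gamma W'$ under which distinguished opens coming from $U$ or $V$ both appear as distinguished opens of $W'$; the desired neighborhood of $x$ is then a common distinguished open of $W'$ realized by a compatible pair $(h,k)$, whose existence in every neighborhood of every $x\in W'$ follows from iterating the algebraic basis property jointly with composability of localizations. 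Once this simultaneous-localization lemma is established, verifying the cocycle condition for $\cW$ and checking the functoriality of $\Phi:\cW\to\cU$ and $\Psi:\cW\to\cV$ reduces to a routine diagram chase.
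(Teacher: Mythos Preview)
Your overall strategy matches the paper's: refine $\cW'$ so that each piece is simultaneously a finite localization of its images in $\cU$ and in $\cV$. You also correctly isolate the crux, namely that after the two-step zigzag
\[
 x\ \in\ V_k\ \subset\ U_h\ \subset\ W'\qquad(U_h\to U,\ V_k\to V\text{ finite localizations})
\]
one must still show that $V_k\to U$ is a finite localization. However, your proposed resolution does not close this gap. Passing to $\Gamma W'$ and observing that the pullbacks of $h$ and $k$ define principal opens of $W'$ only identifies $U_h$ and $V_k$ with $(W')_{h'}$ and $(W')_{k'}$ \emph{as open subschemes}; it does not show that $\Gamma W'\to\Gamma V_k$ is a finite localization of blueprints, because $(\Gamma W')[k'^{-1}]$ need not be global. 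The phrase ``composability of localizations'' does not help either: what is needed is a \emph{factorization} statement, not a composition one.

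The paper supplies exactly this missing lemma (its Corollary on factorization of finite localizations): if $\varphi:V\to U$ is an open immersion of affine blue schemes and $\varphi\circ\psi:W\to U$ is a finite localization, then $\psi:W\to V$ is a finite localization. The proof rests on a globalness argument: if $S^{-1}B$ is global and $B\to C\to S^{-1}B$ factors the localization map with $\Spec C\to\Spec B$ an open immersion, then $S^{-1}B\simeq T^{-1}C$ with $T=f(S)$, so $T^{-1}C$ is global. Once you have this corollary, your zigzag terminates after \emph{two} steps (no iteration is needed): $V_k\to V$ is a finite localization and factors through the open immersion $W_i:=U_h\hookrightarrow V$, hence $V_k\to U_h$ is a finite localization, and composing with $U_h\to U$ gives that $V_k\to U$ is one as well.

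A second, smaller gap concerns the overlap objects. You assert that one can choose $\Phi(W_{\alpha\beta})\in\cU_{U_1,U_2}$ with $W_{\alpha\beta}\to\Phi(W_{\alpha\beta})$ a finite localization ``because $\cU$ is an algebraic presentation'', but the algebraic-basis hypothesis on the $U_i$'s alone does not produce such an element of $\cU$. The paper instead routes through the existing refinement: for $W'\in\cW'_{W'_1,W'_2}$ one takes $U=\Phi'(W')\in\cU$ and $V=\Psi'(W')\in\cV$, forms $W_{1,2}=W_1\times_{W'_1}W'\times_{W'_2}W_2$, and then runs the same two-step (now four-step, to hit $W_1$, $W_2$, $U$, $V$) localization argument, again invoking the factorization corollary at each stage. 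Your sketch should use $\Phi'(W')$ and $\Psi'(W')$ as the targets rather than searching for unspecified elements of $\cU_{U_1,U_2}$.
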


We will need some preliminary statements before we can turn to the proof of the proposition.

\begin{lemma}\label{lemma: morphisms from global blueprints to other blueprints}
 Let $B$ be a global blueprint and $f:B\to C$ a morphism of blueprints such that $f^\ast:\Spec C\to \Spec B$ is an isomorphism of affine blue schemes. Then $f$ is an isomorphism of blueprints.
\end{lemma}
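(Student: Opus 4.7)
My plan is to construct the inverse of $f$ explicitly from the naturality square of the globalization map $\sigma$. The starting observation is that, since $f^\ast:\Spec C\to\Spec B$ is an isomorphism of affine blue schemes, applying the global sections functor shows that $\Gamma f:\Gamma B\to\Gamma C$ is an isomorphism of blueprints. Combined with the hypothesis that $B$ is global, which says $\sigma_B$ is itself an isomorphism, the naturality square
\[
\xymatrix{B\ar[r]^f\ar[d]_{\sigma_B} & C\ar[d]^{\sigma_C} \\ \Gamma B\ar[r]^{\Gamma f} & \Gamma C}
\]
shows that $\sigma_C\circ f=\Gamma f\circ\sigma_B$ is an isomorphism.

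I would then define the candidate inverse $g=\sigma_B^{-1}\circ(\Gamma f)^{-1}\circ\sigma_C:C\to B$, which is a blueprint morphism since each factor is. The identity $g\circ f=\id_B$ drops out immediately from the definition and the commutativity of the square by cancelling $\Gamma f$ against $(\Gamma f)^{-1}$ and $\sigma_B$ against $\sigma_B^{-1}$. For the other direction, the same chase applied on the right yields only the weaker equality $\sigma_C\circ(f\circ g)=\Gamma f\circ\sigma_B\circ g=\sigma_C$, which is to say $\sigma_C\circ(f\circ g)=\sigma_C\circ\id_C$ in $\Hom(C,\Gamma C)$.

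To promote this to $f\circ g=\id_C$, the key missing ingredient is that $\sigma_C:C\to\Gamma C$ is injective on underlying monoids, i.e.\ a monomorphism in $\bp$. I expect to establish this as a preliminary observation: if $\sigma_C(c)=\sigma_C(c')$, then for every prime $\fp$ of $C$ there exists $t\notin\fp$ with $tc=tc'$, so the ideal $I=\{t\in C\mid tc=tc'\}$ is contained in no prime of $C$; by the Krull-style result for proper blueprints (cf.\ \cite{blueprints1}), such an $I$ must exhaust $C$, and taking $t=1\in I$ forces $c=c'$. This general injectivity of $\sigma$ is the main technical obstacle; once it is in hand, $\sigma_C\circ(f\circ g)=\sigma_C\circ\id_C$ gives $f\circ g=\id_C$, and $g$ is the two-sided inverse of $f$, finishing the proof.
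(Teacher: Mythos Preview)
Your overall strategy---use the naturality square to see that $\sigma_C\circ f=\Gamma f\circ\sigma_B$ is an isomorphism, set $g=(\sigma_C\circ f)^{-1}\circ\sigma_C$, and reduce the remaining identity $f\circ g=\id_C$ to the injectivity of $\sigma_C$---is correct and arguably more transparent than the paper's. The paper instead reduces to showing that $\sigma_C$ is a bijection on underlying sets and argues surjectivity and injectivity separately by transporting sections along $(f^\ast)^{-1}$; unwound, this is the same naturality square read at the level of stalks. Both routes hinge on the injectivity of $\sigma_C$.

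The genuine gap is in your justification of that injectivity. The set $I=\{t\in C\mid tc=tc'\}$ is multiplicatively absorbing, but the Krull-style statement you invoke requires $I$ to satisfy the additive condition that blueprint primes carry: if $\sum a_i+d\equiv\sum b_j$ with $a_i,b_j\in I$, one needs $d\in I$. Attempting this, from $a_ic=a_ic'$ and $b_jc=b_jc'$ you only obtain $\sum a_ic+dc\equiv\sum a_ic+dc'$, and blueprints admit no additive cancellation, so $dc=dc'$ does not follow. Worse, the weaker assertion ``a multiplicatively absorbing subset contained in no prime must be all of $C$'' is false for blueprints: in $\bpgenquot{\Fun[a,b]}{a+b\equiv 1}$ the monoid ideal $(a,b)$ is proper yet lies in no prime, because any prime containing both $a$ and $b$ would be forced by the additive condition (applied to $1\equiv a+b$) to contain $1$. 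So the Krull shortcut, as stated, does not go through; you need either a precise citation to where injectivity of $\sigma$ is established in \cite{blueprints1}, or a different argument exploiting the specific hypothesis that $\sigma_C$ already has the one-sided inverse $f$.
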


\begin{proof}
 Since $\Hom(\Spec C,\Spec B)=\Hom(\Gamma B,\Gamma C)$ and $B=\Gamma B$, we obtain a commutative diagram
 \[
  \xymatrix@R=0pc@C=4pc{B \ar[rr]^\sim \ar[dr]_f && \Gamma C \\ & C \ar[ur]_{\sigma_C}}
 \]
 where the isomorphism $B\to \Gamma C$ is induced by $f^\ast$ and $\sigma_C:C\to \Gamma C$ is the globalization map. This shows that $f$ is injective. If we can show that $\sigma_C$ is also injective, then it is clear that $f$ is an isomorphism.

 To show injectivity, we consider $c$ and $c'$ in $C$ with $\sigma_C(c)=\sigma_C(c')$. Then 
 \[
  \tilde s \ = \ \sigma_C(c)\circ (f^\ast)^{-1} \ = \ \sigma_C(c')\circ (f^\ast)^{-1}: \ \Spec B \quad \longrightarrow \quad \coprod C_\fp \ = \ \coprod B_\fp
 \]
 is a global section of $B$ where $\fp$ ranges through all points of $\Spec C=\Spec B$. Since $B$ is global, there is a unique $b\in B$ such that $\tilde s=\sigma_B(b)$. Therefore we have $c=f(b)=c'$.
\end{proof}

\begin{lemma}\label{lemma: criterium for a localization of a blueprint to be global}
 Let $S\subset B$ be a finitely generated multiplicative subset and $f:B\to C$ and $g:C\to S^{-1}B$ blueprint morphisms such that $g\circ f$ equals the canonical morphism $B\to S^{-1}B$. Let $T=f(S)$. If $f^\ast:\Spec C\to \Spec B$ is an open immersion and $S^{-1}B$ is global, then $f$ induces an isomorphism $f_S:S^{-1}B\to T^{-1}C$ of blueprints. In particular, $T^{-1}C$ is a global blueprint.
\end{lemma}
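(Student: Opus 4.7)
The plan is to build $f_S$ via the universal property of localization, show its induced map on spectra is an isomorphism of affine blue schemes by comparing principal opens, and then invoke Lemma \ref{lemma: morphisms from global blueprints to other blueprints} to upgrade this to an isomorphism of blueprints. Write $h=h_1\cdots h_n$ where $h_1,\dots,h_n$ generate $S$, so that $S^{-1}B=B[h^{-1}]$ and $T^{-1}C=C[f(h)^{-1}]$. Since $g\circ f:B\to S^{-1}B$ is the canonical localization, the elements $T=f(S)$ are sent to units of $S^{-1}B$ under $g$; the universal property of localization yields a unique morphism $g_T:T^{-1}C\to S^{-1}B$ factoring $g$. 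Dually, the composite $B\to C\to T^{-1}C$ inverts $S$ and thus factors uniquely through a morphism $f_S:S^{-1}B\to T^{-1}C$. A direct check shows that $g_T\circ f_S$ and $\id_{S^{-1}B}$ agree on the image of $B$, hence coincide by uniqueness.

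The next step is to verify that $f_S^\ast:\Spec T^{-1}C\to\Spec S^{-1}B$ is an isomorphism of affine blue schemes. Since $f^\ast$ is an open immersion, it identifies $\Spec C$ with an open subset $U\subset\Spec B$. The factorization of the inclusion $U_h^B=\Spec S^{-1}B\hookrightarrow\Spec B$ as $f^\ast\circ g^\ast$ forces $U_h^B\subset U$; concretely, the preimage $(f^\ast)^{-1}(U_h^B)$ is exactly $U_{f(h)}^C=\Spec T^{-1}C$, since $f(h)\notin\fp$ if and only if $h\notin f^{-1}(\fp)$. Under the identification $f^\ast:\Spec C\xrightarrow{\sim} U$, the principal open $U_{f(h)}^C$ therefore corresponds to $U_h^B$, and the morphism $f_S^\ast$ is precisely this identification. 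At the sheaf level, both sides inherit the structure sheaf of the principal open $U_h^B$ of $\Spec B$ (directly on the left, via the open immersion $f^\ast$ on the right), so $f_S^\ast$ is indeed an isomorphism of affine blue schemes.

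Finally, $S^{-1}B$ is global by hypothesis, so Lemma \ref{lemma: morphisms from global blueprints to other blueprints} applies to the blueprint morphism $f_S:S^{-1}B\to T^{-1}C$ whose associated map of spectra is an isomorphism, yielding that $f_S$ itself is an isomorphism of blueprints. The ``in particular'' clause is immediate, since globalness is preserved under isomorphism of blueprints. The step I expect to be most delicate is the middle one: one must verify not just that $f_S^\ast$ is a bijection on underlying topological spaces, but that the structure sheaves of $\Spec T^{-1}C$ and $\Spec S^{-1}B$ are genuinely matched by the restriction of $f^\ast$ to these principal opens; this is where the hypothesis that $f^\ast$ is an open immersion (rather than merely a continuous bijection) is essential.
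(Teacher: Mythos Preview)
Your proposal is correct and follows essentially the same route as the paper: construct $f_S$ and $g_T$ via the universal property of localization, argue that $f_S^\ast$ is an isomorphism of affine blue schemes, and then invoke Lemma~\ref{lemma: morphisms from global blueprints to other blueprints} using the globalness of $S^{-1}B$. The only cosmetic difference is in the middle step: the paper packages the argument into a single commutative diagram among $\Spec B$, $\Spec C$, $\Spec S^{-1}B$, and $\Spec T^{-1}C$, observes that all four are open subschemes of $\Spec B$ with restricted structure sheaf, and reads off that $f_S^\ast$ and $g_T^\ast$ are mutually inverse open immersions; you instead identify the principal opens $U_{f(h)}^C$ and $U_h^B$ directly under $f^\ast$, which amounts to the same thing.
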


\begin{proof}
 By the universal property of localizations, $f$ and $g$ induce morphisms $f_S:S^{-1}B\to T^{-1}C$ and $g_T: T^{-1}C\to S^{-1}B$, respectively. Let $U=\Spec B$, $U_S=\Spec S^{-1}B$, $V=\Spec C$ and $V_T=\Spec T^{-1}C$. Then we obtain a commutative diagram
 \[
  \xymatrix{ V_T \ar[rr]\ar@{-->}@<0,5ex>[dr]^{f_S^\ast} & & V\ar[dr]^{f^\ast} \\ & U_S\ar@{-->}@<0,5ex>[ul]^{g^\ast_T}\ar@{-->}[ur]_{g^\ast}\ar[rr] & & U }
 \]
 where the solid arrows are open immersions. This means that $U_S$, $V$ and $V_T$ are open subsets of $U$ and that the respective structure sheaves are restrictions of the structure sheaf of $U$. Consequently, the dashed arrows are open immersions as well. In particular $f_S^\ast$ and $g_T^\ast$ must be mutual inverse isomorphisms. Therefore, we can apply Lemma \ref{lemma: morphisms from global blueprints to other blueprints} to $f_S:S^{-1}B\to T^{-1}C$, which says that $f_S$ is an isomorphism.
\end{proof}

\begin{cor}\label{cor: factorization of finite localizations}
 Let $\varphi:V\to U$ be an open immersion and $\psi:W\to V$ a morphism such that $\psi\circ\varphi: W\to U$ is a finite localization. Then $\psi$ is a finite localization.
\end{cor}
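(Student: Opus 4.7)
The plan is to translate the statement into a question about globalized coordinate blueprints and then invoke Lemma \ref{lemma: criterium for a localization of a blueprint to be global}. Set $A = \Gamma U$, $B = \Gamma V$, $C = \Gamma W$, and let $f : A \to B$ and $g : B \to C$ be the blueprint morphisms dual to $\varphi$ and $\psi$ respectively; all three blueprints are global. By hypothesis $\varphi\circ\psi$ is a finite localization, so after identifying $C$ with $S^{-1}A$ for a suitable finitely generated multiplicative subset $S \subseteq A$, the composition $g \circ f$ is the canonical localization map $A \to S^{-1}A$.

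I would then apply Lemma \ref{lemma: criterium for a localization of a blueprint to be global} with blueprint $A$, multiplicative set $S$, and morphisms $f : A \to B$ and $g : B \to S^{-1}A$. The three hypotheses are immediate: the composition $g \circ f$ is the canonical localization by construction; $f^\ast : \Spec B \to \Spec A$ is an open immersion because $\varphi$ is; and $S^{-1}A = C = \Gamma W$ is global since $\Gamma$ lands in global blueprints. The lemma then yields an isomorphism $f_S : S^{-1}A \to T^{-1}B$, where $T = f(S)$. Because $S$ is finitely generated in $A$, the set $T$ is finitely generated in $B$, so $B \to T^{-1}B$ is itself a finite localization.

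It remains to identify $g$ with this canonical localization up to the isomorphism $f_S$. Since $g(T) = (g\circ f)(S)$ consists of units in $C = S^{-1}A$, the universal property of localization factors $g$ uniquely as $\tilde g \circ \ell$, where $\ell : B \to T^{-1}B$ is the canonical map and $\tilde g : T^{-1}B \to C$. Precomposing with $f$ and using both $g \circ f = (A \to S^{-1}A)$ and $\ell \circ f = f_S \circ (A \to S^{-1}A)$, I obtain $\tilde g \circ f_S \circ (A \to S^{-1}A) = (A \to S^{-1}A)$; since $A \to S^{-1}A$ is an epimorphism, this forces $\tilde g \circ f_S = \mathrm{id}_{S^{-1}A}$, and since $f_S$ is an isomorphism, $\tilde g = f_S^{-1}$ is one as well. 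Hence $g$ is, up to an isomorphism of its target, the canonical finite localization $\ell$, so $\psi$ is a finite localization. I expect no substantive obstacle beyond this bookkeeping: the real content is Lemma \ref{lemma: criterium for a localization of a blueprint to be global}, which I would use as a black box, together with the fact that localization morphisms are epimorphisms.
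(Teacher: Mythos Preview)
Your proposal is correct and follows exactly the approach of the paper: apply Lemma \ref{lemma: criterium for a localization of a blueprint to be global} to the global sections of $U$, $V$, $W$ and the dual morphisms. The paper's proof is a one-line reference to the lemma with the substitutions $B=\Gamma U$, $C=\Gamma V$, $S^{-1}B=\Gamma W$, $f=\Gamma\varphi$, $g=\Gamma\psi$; you have simply spelled out the bookkeeping (that $T=f(S)$ is finitely generated and that $g$ agrees with the canonical map $B\to T^{-1}B$ up to the isomorphism $f_S^{-1}$) which the paper leaves implicit.
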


\begin{proof}
 The statement follows from applying Lemma \ref{lemma: criterium for a localization of a blueprint to be global} to $B=\Gamma U$, $C=\Gamma V$, $S^{-1}B=\Gamma W$, $f=\Gamma\varphi$ and $g=\Gamma\psi$.
\end{proof}

\begin{proof}[Proof of Proposition \ref{prop: common refinement for algebraic presentations}]
 Let $\Phi':\cW'\to\cU$ and $\Psi':\cW'\to\cV$ be refinements and $\cU$ and $\cV$ algebraic presentations. We construct $\cW$ as follows. Since $\cU$ is an algebraic presentation, we can cover each $W'\in\cW'_{\max}$ with finite localizations $W_i$ of $U=\Phi'(W')$. Since $\cV$ is an algebraic presentation, we can cover each of the $W_i$ with finite localizations $W_{i,j}$ of $V=\Psi(W')$. By Corollary \ref{cor: factorization of finite localizations}, each $W_{i,j}$ is a finite localizations of $W_i$, and thus of $U$. We define $\cW_{\max}$ as the collection of all $W_{i,j}$ (for varying $W'$), which will be the maximal elements of an affine presentation $\cW$. These sets $W_{i,j}$ come with an open immersions $\xi_{W_{i,j}}:W_{i,j}\to W'$, which will be part of a refinement $\Xi:\cW\to\cW'$.

 For $W'_1,W'_2\in\cW'_{\max}$, $W'\in\cW'_{W'_1,W'_2}$ and $W_1,W_2\in\cW_{\max}$ with $\Xi(W_1)=W_1'$ and $\Xi(W_2)=W_2'$, define $W_{1,2}=W_1\times_{W_1'}W'\times_{W_2'}W_2$, which comes together with open immersions $W_{1,2}\to W_1$, $W_{1,2}\to W_2$ and $\Xi_{W_{1,2}}:W_{1,2}\to W'$. Note that the family of all $W_{1,2}$ for $W_1$ and $W_2$ varying through the open subschemes of the covering of $W_1'$ and $W_2'$, respectively, cover $W'$ by the stability of coverings under base change. Let $U=\Phi'(W')$ and $V=\Psi'(W')$. Then we have the following commutative diagram of open immersions.
 \[
  \xymatrix@R=0pc@C=4pc{ & W_1 \ar[rr]^(0.3){\Xi_{W_1}} & & W_1'  \\ &&&& U \\ W_{1,2} \ar[uur]\ar[rr]^(0.7){\Xi_{W_{1,2}}}\ar[ddr] &&  W' \ar[uur]\ar[ddr]\ar[urr]^(0.75){\Phi'_{W'}}\ar[drr]_(0.75){\Psi'_{W'}} \\ &&&& V \\ & W_2\ar[rr]^(0.3){\Xi_{W_2}} & & W_2' }
 \]
 Since $\Phi'_{W_i'}\circ\Xi_{W_i}:W_i\to W_i'\to U_i$ for $U_i=\Phi'(W_i')$ in $\cU$ and $i=1,2$ are open immersions, Corollary \ref{cor: factorization of finite localizations} implies that the finite localizations of $W_i$ form a basis of its topology. Therefore, we can cover $W_{1,2}$ with finite localizations $W_i$ of $W_1$. We can cover each $W_i$ with finite localizations $W_{i,j}$ of $W_2$, which are finite localizations of $W_i$ by Corollary \ref{cor: factorization of finite localizations} and thus of $W_1$. We can cover each $W_{i,j}$ with finite localizations $W_{i,j,k}$ of $U$, and each $W_{i,j,k}$ with finite localizations $W_{i,j,k,l}$ of $V$. By the same argument as before, Corollary \ref{cor: factorization of finite localizations} implies that each $W_{i,j,k,l}$ is a common finite localization of $W_1$, $W_2$, $U$ and $V$. Note that the family of all $W_{i,j,k,l}$ covers $W_{1,2}$.

 We define $\cW$ as the union of $\cW_{\max}$ together with all sets $W_{i,j,k,l}$, together with the finite localizations $W_{i,j,k,l}\to W_1$ and $W_{i,j,k,l}\to W_2$, where $W_1'$ and $W_2'$ vary through $\cW'_{\max}$, $W'$ varies through $\cW'_{W_1',W_2'}$, $W_1$ and $W_2$ vary through $\cW_{\max}$ such that $\Xi(W_1)=W_1'$ and $\Xi(W_2)=W_2'$, and $W_{i,j,k,l}$ vary through all sets in the covering of $W_{1,2}=W_1\times_{W_1'}W'\times_{W_2'}W_2$. 

 By construction, all morphisms of $\cW$ are finite localizations and the ad hoc-defined set $\cW_{\max}$ is indeed the set of maximal elements of $\cW$. The family of morphisms $\Xi_{W}:W\to \Xi(W)$ defines a morphism $\Xi:\cW\to\cW'$, which is a refinement by the construction of $\cW$. If we define $\Phi=\Phi'\circ\Xi:\cW\to\cU$ and $\Psi=\Psi'\circ\Xi:\cW\to\cV$, then all morphisms $\Phi_W:W\to \Phi(W)$ and $\Psi_W:W\to\Psi(W)$ are finite localizations by the construction of $\cW$. This establishes Proposition \ref{prop: common refinement for algebraic presentations}.
\end{proof}

\begin{cor}\label{cor: presentation of morphisms between algebraically presented bleu schemes}
 Let  $\varphi:X\to Y$ be a morphism between two blue schemes with respective algebraic presentations $\cU$ and $\cV$. Then there exists refinements $\cU'\to\cU$ and $\cV'\to\cV$ with the following properties: there is a morphism $\Phi:\cU'\to\cV'$ of affine presentations that induces $\varphi$; all morphisms of $\cU'$ and $\cV'$ are finite localizations; and the refinements $\cU'\to\cU$ and $\cV'\to\cV$ consist of finite localizations.
\end{cor}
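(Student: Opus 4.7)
First, property \eqref{part6} furnishes a morphism $\alpha_0\colon\cU_0\to\cV_0$ of some affine presentations of $X$ and $Y$ that induces $\varphi$, and property \eqref{part5} provides common refinements $\cU_1$ of $\cU$ and $\cU_0$ and $\cV_1$ of $\cV$ and $\cV_0$. To lift $\alpha_0$, I form the fibre product $\cU_2=\cU_1\times_{\cV_0}\cV_1$ along the composite $\cU_1\to\cU_0\xrightarrow{\alpha_0}\cV_0$ and the refinement $\cV_1\to\cV_0$. By Lemma \ref{lemma: fibre products of affine presentations} the projection $\cU_2\to\cU_1$ is a base change of the refinement $\cV_1\to\cV_0$ and hence itself a refinement, so $\cU_2$ refines $\cU$ through $\cU_1$, and the other projection yields a morphism $\alpha_2\colon\cU_2\to\cV_1$ inducing $\varphi$.

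Next, I promote everything to finite localizations, adapting the construction in the proof of Proposition \ref{prop: common refinement for algebraic presentations}. Since $\cV$ is an algebraic presentation, each element of $\cV$ is with an algebraic basis, so every $V\in\cV_1$, viewed as an open subscheme of the corresponding $V_0\in\cV$, can be covered by finite localizations of $V_0$; handling the intersection data $\cV_{V,V'}$ analogously (by base changes and further finite-localization covers) produces a refinement $\cV'\to\cV_1$. By Corollary \ref{cor: factorization of finite localizations}, every internal morphism of $\cV'$ and every morphism in the refinement $\cV'\to\cV$ is then a finite localization.

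To produce $\cU'$ together with the desired morphism $\Phi\colon\cU'\to\cV'$, I refine $\cU_2$ compatibly with $\alpha_2$ and with the algebraic basis of $\cU$. For each $W\in\cU_2^{\max}$ with $V=\alpha_2(W)\in\cV_1$ and with image $U\in\cU$ under the refinement $\cU_2\to\cU$, I first pull back the chosen covering of $V$ by elements $V_i$ of $\cV'$ to obtain an open cover $\{\tilde W_i=W\times_V V_i\}$ of $W$, each piece mapping into a prescribed $V_i$, and then cover each $\tilde W_i$ by finite localizations $W_{i,k}$ of $U$ from the algebraic basis of $U$. The intersection data for pairs of maximal elements is produced in the same spirit by iterating base changes and finite-localization covers with respect to both the corresponding elements of $\cU$ and of $\cV'$. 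Corollary \ref{cor: factorization of finite localizations} then ensures that all internal morphisms of $\cU'$ and all refinement morphisms $\cU'\to\cU$ are finite localizations, and the assignment $W_{i,k}\mapsto V_i$ extends to the morphism $\Phi\colon\cU'\to\cV'$ that induces $\varphi$.

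The main obstacle is verifying the cocycle condition for $\cU'$ and the functoriality of $\Phi$, which amounts to iterating the base change and finite-localization steps until every pairwise intersection is simultaneously covered by finite localizations of all the relevant $\cU$- and $\cV'$-elements. This is the same kind of multi-step bookkeeping carried out in the proof of Proposition \ref{prop: common refinement for algebraic presentations}, and needs no new ideas beyond those developed there and in Corollary \ref{cor: factorization of finite localizations}.
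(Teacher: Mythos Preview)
Your approach is correct and follows essentially the same route as the paper's proof: obtain some morphism of affine presentations inducing $\varphi$ via property \eqref{part6}, pass to common refinements with the given $\cU$ and $\cV$ via property \eqref{part5}, transport the morphism along these refinements by a fibre product (Lemma \ref{lemma: fibre products of affine presentations}), and finally upgrade everything to finite localizations using the algebraic-basis hypothesis and Corollary \ref{cor: factorization of finite localizations}.

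The only noteworthy difference is organizational. The paper invokes Proposition \ref{prop: common refinement for algebraic presentations} twice as a black box: once to produce $\cV'$ from a common refinement of $\cV$ and the target presentation, and once more (after pulling $\cV'$ back to the source side) to produce $\cU'$. You instead re-run the finite-localization covering construction inline and intertwine it with the pullback of $\cV'$ so as to build $\Phi$ simultaneously. This costs you some bookkeeping (the cocycle and functoriality checks you flag at the end), whereas the paper's version hides that bookkeeping inside the already-proved proposition and simply composes $\cU'\to\widetilde\cU''\to\cV'$ at the end. Either order for the fibre-product step works; the paper takes the fibre product \emph{after} constructing $\cV'$, which slightly shortens the argument.
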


\begin{proof}
 Let $\Phi':\widetilde\cU'\to\widetilde\cV'$ a morphism of affine presentations that induces $\varphi$. Choose a common refinement $\widetilde\cV''$ of $\cV$ and $\widetilde\cV'$. By Proposition \ref{prop: common refinement for algebraic presentations}, we find a refinement $\cV'$ of $\widetilde\cV''$ whose morphisms are finite localizations and such that $\cV'\to\cV$ consists of finite localizations. Then $\widetilde\cU''=\cV'\times_{\widetilde\cU'}\widetilde\cV'$ is a refinement of $\widetilde\cU'$ and it comes with a morphism $\Phi'':\widetilde\cU''\to\cV'$ that induces $\varphi$. Let $\cU''$ be a common refinement of $\cU$ and $\widetilde\cU''$. By Proposition \ref{prop: common refinement for algebraic presentations}, there is a refinement $\cU'$ of $\widetilde\cU''$ whose morphisms are all finite localization and such that the refinement $\cU'\to\cU$ consists of finite localizations. It is clear that the composition $\Phi:\cU'\to\cU''\to\widetilde\cU''\to\cV'$ induces $\varphi$, which concludes the 
proof of the corollary.
\end{proof}

%%%%%%%%%%%%%%%%%%%%%%%%%%%%%%%%%%%%%%%%%%%%%%%%%%%%%%%%%%%%%%%%%%%%%%%%%%%%%%%%%%%%%%%%%%%%%%%%%%%%%%%%%%%%%%%%%%%%%%%%%%%%%%%%%%%%%%%%%%%%%%%%%%

\section{Algebraically presented blue schemes as subcanonical blue schemes}
\label{section: algebraically presented and subcanonical blue schemes}

The idea for associating a subcanonical blue scheme with a geometric blue scheme $X$ is to consider the colimit $\colim (\spec\circ\Gamma)(\cU)$ where $\cU$ is an affine presentation for $X$. Though in general this construction depends on the choice of affine presentation, we will see in this section that it gives rise to a well-defined functor
\[
 \cF: \ \Sch_\Fun^\alg \ \longrightarrow \ \Sch_\Fun^\can
\]
from the full subcategory $\Sch_\Fun^\alg$ of algebraically presented blue schemes in $\Sch_\Fun^\geo$, given that we assume that $\cU$ is an algebraic presentation of $X$.

\subsection*{Objects}
Let $X$ be a blue scheme with an algebraic presentation $\cU$. Then all morphisms of the dual diagram $\Gamma\cU$ in the category of blueprints are finite localizations, and therefore $\spec\Gamma\cU$ is a diagram of Zariski opens in $\Sch_\Fun^\can$ and $\spec\Gamma\cU$ is an affine presentation. We define $\cF(X)=\colim\spec\Gamma\cU$.

This definition is independent (up to canonical isomorphism) from the choice of algebraic presentation for the following reason. Let $\cU$ and $\cV$ be two algebraic presentations of $X$, and $\cW'$ a common refinement of $\cU$ and $\cV$. By Proposition \ref{prop: common refinement for algebraic presentations}, there is a refinement $\cW$ of $\cW'$ such that all morphisms in $\cW$, all morphisms $W\to\Phi(W)$ in $\Phi:\cW\to\cU$ and all morphisms $W\to\Psi(W)$ in $\Psi:\cW\to\cV$ are finite localizations. Therefore, $\spec\Gamma\cW$ is an affine presentation in the category of affine canonical blue schemes. The induced morphisms $\spec\Gamma\Phi: \spec\Gamma\cW\to\spec\Gamma\cU$ and $\spec\Gamma\Psi: \spec\Gamma\cW\to\spec\Gamma\cV$ are refinements since all morphisms $\Phi_W$ and $\Psi_W$ are finite localizations and since all coverings of an object $U\in\cU$ (or $V\in\cV$, respectively) are canonical by the definition of an algebraic presentation.

\subsection*{Morphisms}
Let $\varphi:X\to Y$ be morphism of schemes with algebraic presentations $\cU$ and $\cV$. By Corollary \ref{cor: presentation of morphisms between algebraically presented bleu schemes}, there are refinements $\cU'\to\cU$ and $\cV'\to\cV$ such that all morphisms involved are finite localizations and such that $\varphi$ is induced by a morphism $\Phi:\cU'\to\cV'$ of affine presentations. This means that $\spec\Gamma\cU'$ and $\spec\Gamma\cV'$ are affine presentations in the category of affine canonical blue schemes. Since $\cU$ and $\cV$ are algebraic presentations, $\spec\Gamma\cU'\to\spec\Gamma\cU$ and $\spec\Gamma\cV'\to\spec\Gamma\cV$ are refinements. Thus $\cF(X)$ can be presented as the colimit of $\colim\spec\Gamma\cU'$ and $\cF(Y)$ can be presented as the colimit of $\colim\spec\Gamma\cV'$. We define $\cF(\varphi):\cF(X)\to\cF(Y)$ as the colimit of the induced morphism $\spec\Gamma\Phi:\spec\Gamma\cU'\to \spec\Gamma\cV'$ of affine presentations.

The independence of this definition from the chosen affine presentations $\cU$, $\cV$, $\cU'$ and $\cV'$ can be seen by considering suitable common refinements. We omit the arguments, which are similar to the ones that we used before. Since the definition of $\cF$ is stable under refinements, it follows that $\cF(\varphi\circ\psi)=\cF(\varphi)\circ\cF(\psi)$. 

\begin{rem}
 Note that the image $\cF(X)$ is not affine if $X$ has more than one closed point. This is, in particular, the case for the spectra of blueprints with more than one maximal $k$-ideal and shows that $\cF$ does not restrict to a functor from $\Aff_\Fun^\alg$ to $\Aff_\Fun^\can$.
\end{rem}

Let $\cG:\Sch_\Fun^\can\to\Sch_\Fun^\geo$ denote the extension of the functor $\Spec\circ\Gamma:\Aff_\Fun^\can\to\Aff_\Fun^\geo$, as introduced in section \ref{section: compatibility with base extension}.

\begin{thm}\label{thm: comparison theorem}
 The functor $\cG\circ\cF:\Sch_\Fun^\alg\to\Sch_\Fun^\geo$ is isomorphic to the embedding of $\Sch_\Fun^\alg$ as a subcategory of $\Sch_\Fun^\geo$. The functor $\cF:\Sch_\Fun^\alg\to\Sch_\Fun^\can$ is fully faithful.
\end{thm}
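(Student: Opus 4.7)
The plan is to prove the theorem in three stages: first establish the natural isomorphism $\cG\circ\cF \simeq \id_{\Sch_\Fun^\alg}$; use this to deduce faithfulness of $\cF$; and finally prove fullness by lifting morphisms at the level of affine presentations.

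For the first stage, fix $X$ in $\Sch_\Fun^\alg$ with algebraic presentation $\cU$. By construction, $\spec\Gamma\cU$ is an affine presentation of $\cF(X)$, so $\cG\cF(X) = \colim \Spec\Gamma\cU$ in $\Sch_\Fun$. For each $U = \Spec B$ appearing in $\cU$, the globalization map $\sigma_B : B \to \Gamma B$ induces the canonical isomorphism $\sigma_B^\ast : \Spec\Gamma B \to \Spec B = U$ recalled in Section~\ref{section: blue schemes}. Functoriality of $\Gamma$ assembles these into an isomorphism of diagrams $\Spec\Gamma\cU \to \cU$ in $\cA$, and by the universal property of colimits the induced map is an isomorphism $\eta_X : \cG\cF(X) \to X$. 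Naturality of $\eta_X$ in $X$ follows from the naturality of the globalization together with the definition of $\cF$ and $\cG$ on morphisms via refinements. Faithfulness of $\cF$ is then immediate: if $\varphi, \psi : X \to Y$ satisfy $\cF(\varphi) = \cF(\psi)$, applying $\cG$ and using naturality of $\eta$ gives $\varphi = \eta_Y \circ \cG\cF(\varphi) \circ \eta_X^{-1} = \eta_Y \circ \cG\cF(\psi) \circ \eta_X^{-1} = \psi$.

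For fullness, given $f : \cF(X) \to \cF(Y)$ together with algebraic presentations $\cU$ of $X$ and $\cV$ of $Y$, property \eqref{part6} applied in $\Sch_\Fun^\rel$ provides a refinement $\cU_1 \to \spec\Gamma\cU$ and a morphism of affine presentations $\Psi : \cU_1 \to \spec\Gamma\cV$ inducing $f$. The crucial observation is that, by Corollary~\ref{cor:flat epi ofp euqals finite localization}, every Zariski open immersion between spectra of global blueprints corresponds to a finite localization on the blueprint side, and finite localizations give opens in both the relative and the blue scheme sense. By Lemma~\ref{lemma: criterium for a localization of a blueprint to be global}, such localizations remain global, so each object of $\cU_1$ is of the form $\spec\Gamma V$ for a unique affine blue scheme $V = \Spec\Gamma V$, and each morphism in $\cU_1$ as well as each component of the refinement $\cU_1 \to \spec\Gamma\cU$ comes from a finite localization in $\cA$. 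This exhibits $\cU_1$ as $\spec\Gamma\cU'$ for some refinement $\cU' \to \cU$ in $\cA$, which is automatically an algebraic presentation of $X$. Running the same argument on the $\cV$-side and using Proposition~\ref{prop: common refinement for algebraic presentations} to align refinements compatibly produces $\cV' \to \cV$ together with a morphism $\Phi : \cU' \to \cV'$ satisfying $\Psi = \spec\Gamma\Phi$. Then $\Phi$ induces a morphism $\varphi : X \to Y$ in $\Sch_\Fun^\alg$ with $\cF(\varphi) = f$ by construction.

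The main obstacle is the bookkeeping in the fullness step: converting a relative-scheme refinement of $\spec\Gamma\cU$ into a blue-scheme refinement of $\cU$ relies crucially on the algebraic basis condition, which ensures both that finite localizations are abundant enough to serve as a basis on each side and that coverings in the two senses agree. Once this lifting is in place, the rest of the argument amounts to formal manipulation of properties \eqref{part1}--\eqref{part6} guaranteed by Theorems~\ref{thm: properties 1 to 6 for relative schemes} and~\ref{thm: properties 1 to 6 for blue schemes}, combined with the fact that $\Spec$ restricted to $\Gamma\bp$ is fully faithful.
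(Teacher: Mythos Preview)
Your first stage, establishing $\cG\circ\cF\simeq\id$ via the globalization isomorphisms $\sigma_B^\ast:\Spec\Gamma B\to\Spec B$, is correct and is essentially the paper's argument spelled out in more detail; deducing faithfulness formally from this is fine.

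For fullness your route diverges from the paper's and contains a gap. The paper does not lift $f:\cF(X)\to\cF(Y)$ to affine presentations directly; it simply applies $\cG$ to $f$ and observes that on the essential image of $\cF$ the composite $\cF\circ\cG$ is naturally the identity, so that $\varphi=\eta_Y\circ\cG(f)\circ\eta_X^{-1}$ is the desired preimage. Your approach---pulling a relative refinement $\cU_1\to\spec\Gamma\cU$ back to a blue refinement $\cU'\to\cU$---is more explicit, but the key step is not justified: you invoke Lemma~\ref{lemma: criterium for a localization of a blueprint to be global} to conclude that a finite localization of a global blueprint is again global, whereas that lemma has the globalness of $S^{-1}B$ among its \emph{hypotheses}, not its conclusions. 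In general there is no reason for $(\Gamma U)[h^{-1}]$ to be global, so the identification $\cU_1=\spec\Gamma\cU'$ you claim need not hold on the nose. The algebraic-basis condition on the objects of $\cU$ is precisely what allows one to refine $\cU_1$ further until every object becomes a finite localization in the blue sense, and the conservativity clause guarantees that such a refinement is still a covering on the relative side; you acknowledge this in your final paragraph, but the body of your argument does not actually carry it out. Either complete this refinement step explicitly (which amounts to redoing the work behind Proposition~\ref{prop: common refinement for algebraic presentations} in mixed form), or follow the paper's shorter route and use $\cG$ as a quasi-inverse.
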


\begin{proof}
 Let $X$ be a blue scheme with algebraic presentation $\cV$. Then $\cF(X)$ is isomorphic to the colimit of the affine presentation $\spec\Gamma\cU$. By the definition of $\cG$, $\cG(\cF(X))$ is isomorphic to the colimit of the affine presentation $\Spec\bigl(\spec\Gamma\cU\bigr)^\op$, which is canonically isomorphic to $\cU$ itself. This means that $\cG\circ\cF(X)$ is canonically isomorphic to $X$.

 Let $\varphi:X\to Y$ be a morphism of blue schemes with algebraic presentations $\cU$ and $\cV$, respectively. Then $\varphi$ is induced by a morphism $\Phi:\cU'\to\cV'$ of refinements of $\cU$ and $\cV$, respectively, which induce isomorphisms $\colim\spec\Gamma\cU'\to\cF(X)$ and $\colim\spec\Gamma\cV'\to\cF(Y)$. This means that $\cF(\varphi)$ is represented by $\spec\Gamma\Phi:\spec\Gamma\cU'\to\spec\Gamma\cV'$. This induces an isomorphism of $\cG(\cF(\varphi))$ with $\Spec\bigl(\spec\Gamma\Phi\bigr)^\op$, which is canonically isomorphic to $\varphi$ itself. This shows the former claim of the theorem.

 If $X$ and $Y$ are in the essential image of $\cF$ and $\varphi:X\to Y$ is a morphism of canonical blue schemes, then it is clear from the local nature of $\cF$ and $\cG$ that $\cF(\cG(\varphi))$ is naturally identified with $\varphi$. This shows that $\cF$ is fully faithful.
\end{proof}

%%%%%%%%%%%%%%%%%%%%%%%%%%%%%%%%%%%%%%%%%%%%%%%%%%%%%%%%%%%%%%%%%%%%%%%%%%%%%%%%%%%%%%%%%%%%%%%%%%%%%%%%%%%%%%%%%%%%%%%%%%%%%%%%%%%%%%%%%%%%%%%%%%

\section{Concluding remarks}
\label{section: concluding remarks}

\subsection{} The functors $\cF$ and $\cG$ restrict to mutual inverse equivalences between monoi\-dal schemes and schemes relative to the category of pointed sets, which is, up to the technical variance of the base point, Vezzani's main result in \cite{Vezzani12}.

\subsection{} We can consider the functor $\cF$ as a \emph{local section} to the functor $\Spec\circ\Gamma:\Sch_\Fun^\can\to\Sch_\Fun^\geo$. There are other choices of local sections to $\Spec\circ \Gamma$, one of which is the following.

Let us call a blueprint $B$ \emph{totally global} if every localization of $B$ is global. Examples of such blueprints are monoids and rings. A geometric blue scheme $X$ is \emph{totally global} if every open affine subscheme is the spectrum of a totally global blueprint. Examples are monoidal schemes and usual schemes. There are, however, blue schemes of finite type over $\Fun$ that are not totally global, e.g.\ $X=\Spec \bpgenquot{\Fun[a,b,g,h]}{g+h\=1,ah\=bg}$. Thus the class of totally global blue schemes does not include all scheme of finite type over $\Fun$.

Let $\Sch_\Fun^\tot$ be the full subcategory of $\Sch_\Fun^\geo$ whose objects are totally global blue schemes. The diagram $\cU$ of all open subschemes of a totally global blue scheme $X$, together with all inclusion maps, forms an affine presentation of $X$ such that $\spec\circ\Gamma\cU$ is an affine presentation in $\Aff_\Fun^\can$. The association $\cF'(X)=\colim\spec\circ\Gamma(\cU)$ is functorial and defines a the local section
\[
 \cF':\ \Sch_\Fun^\tot \quad \longrightarrow \quad \Sch_\Fun^\can
\]
to $\Spec\circ\Gamma:\Sch_\Fun^\can\to\Sch_\Fun^\geo$. It restricts to a functor $\Aff_\Fun^\tot \to\Aff_\Fun^\can$, and it coincides with $\cF$ on the subcategory of monoidal schemes, but not in general. The blueprint $B=\bpgenquot{\Fun[a,b]}{a+b\=1}$ is an example for which $X=\Spec B$ is algebraically presented and totally global, but $\cF'(X)\ncong\cF(X)$.

\bibliographystyle{plain}

\end{document}